\renewcommand{\geq}{\geqslant}
\renewcommand{\leq}{\leqslant}
\renewcommand{\succeq}{\succcurlyeq}
\renewcommand{\le}{\leq}
\renewcommand{\ge}{\geq}
\newcommand{\arxiv}[1]{\href{http://arxiv.org/abs/#1}{arXiv:#1}}
\newcommand{\ie}{i.e.}
\DeclareMathOperator{\sign}{\mathsf{sign}}
\DeclareMathAlphabet{\mathbfcal}{OMS}{cmsy}{b}{n}
\DeclareMathAlphabet{\mathbbold}{U}{bbold}{m}{n}
\newcommand{\R}{\mathbb{R}}
\newcommand{\nnR}{\mathbb{R}_{\ge 0}}     
\newcommand{\Q}{\mathbb{Q}}
\newcommand{\N}{\mathbb{N}}
\newcommand{\Rinfty}{\R\cup\{-\infty\}}
\newcommand{\puiseux}{\mathbb{K}}
\newcommand{\nnpuiseux}{\mathbb{K}_{\geq 0}}
\newcommand{\trop}[1][]{\ifthenelse{\equal{#1}{}}{ \mathbb{T} }{ \mathbb{T}(#1) }}
\newcommand{\strop}[1][]{{\trop[#1]}_{\pm}}    
\newcommand{\postrop}[1][]{{\trop[#1]}_{+}}    
\newcommand{\negtrop}[1][]{{\trop[#1]}_{-}}    
\newcommand{\tplus}{\oplus}  
\newcommand{\tsum}{\bigoplus}
\newcommand{\tdot}{\odot}
\newcommand{\tminus}{\ominus}  
\newcommand{\abs}[1]{|{#1}|}
\newcommand{\zero}{{-\infty}}
\DeclareMathOperator*{\val}{\mathsf{val}}
\DeclareMathOperator*{\lc}{\mathsf{lc}}
\DeclareMathOperator*{\sval}{\mathsf{sval}}
\newcommand{\bo}[1]{\mathbold{#1}}
\newcommand{\Smpg}{\textsc{Smpg}}
\newcommand{\Smpgi}{\textsc{Smpg($k$)}}
\newcommand{\Smpgcomp}{\textsc{Smpg-comp}}
\newcommand{\Tropicalsdfp}{\textsc{Tmsdfp}}
\newcommand{\spectra}{\mathcal{S}}
\newcommand{\bspectra}{\mathbfcal{S}}
\newtheorem{theorem}{Theorem}
\newtheorem{proposition}[theorem]{Proposition}
\newtheorem{corollary}[theorem]{Corollary}
\newtheorem{lemma}[theorem]{Lemma}
\theoremstyle{definition}
\newtheorem{definition}[theorem]{Definition}
\newtheorem{assumption}[theorem]{Assumption}
\theoremstyle{remark}
\newtheorem{remark}[theorem]{Remark}
\newtheorem{example}[theorem]{Example}
\tikzset{grid/.style={gray!30,very thin}}
\tikzset{axis/.style={gray!50,->,>=stealth'}}
\tikzset{convex/.style={draw=none,fill=lightgray,fill opacity=0.7}}
\tikzset{convexborder/.style={very thick}}
\tikzset{point/.style={blue!50}}
\tikzset{hs/.style={fill opacity=0.3,fill=orange,draw=none}}
\tikzset{hsborder/.style={orange,ultra thick,dashdotted}}
\newcommand{\overbar}[1]{\mkern 1.5mu\overline{\mkern-1.5mu#1\mkern-1.5mu}\mkern 1.5mu}
\newcommand{\loew}{\succeq}
\newcommand{\Pbb}{\mathbb{P}}
\newcommand{\E}{\mathbb{E}}
\newcommand{\Poly}{\mathrm{Poly}}
\newcommand{\ind}{\mathbf{1}}
\newcommand{\shapley}{F}
\newcommand{\NP}{\mathrm{NP}}
\newcommand{\coNP}{\mathrm{coNP}}
\newcommand*\minstate[1]{\tikz[baseline=(char.base)]{
            \node[shape=circle,draw,inner sep=1.1pt] (char) {#1};}}
\newcommand*\maxstate[1]{\tikz[baseline=(char.base)]{
            \node[shape=rectangle,draw,inner sep=2.1pt] (char) {#1};}}
\newcommand*\minstatesmall[1]{\minstate{\scriptsize{#1}}}
\newcommand*\maxstatesmall[1]{\maxstate{\scriptsize{#1}}}
\newcommand{\gameval}{\chi}
\newcommand{\chainspace}{E}
\newcommand{\state}{u}
\newcommand{\stateII}{w}
\newcommand{\payoff}{r}
\newcommand{\avpayoff}{g}
\newcommand{\reclass}{C}
\newcommand{\stdist}{\pi}
\newcommand{\trmatrix}{P}
\newcommand{\card}[1]{|{#1}|}
\newcommand{\dunion}{\uplus}
\newcommand{\pert}{\lambda}
\newcommand{\pertspectra}{\spectra_{\lambda}}
\newcommand{\dominion}{D}
\newcommand{\bias}{v}
\newcommand{\eigenval}{\eta}
\newcommand{\defor}{\delta}
\title{Solving generic nonarchimedean semidefinite programs\\using stochastic game algorithms}
\date{\today}
\thanks{This manuscript has been accepted for publication in Journal of Symbolic Computation, \url{https://doi.org/10.1016/j.jsc.2017.07.002}. \copyright 2018. The present file is made available under the CC-BY-NC-ND 4.0 license \url{http://creativecommons.org/licenses/by-nc-nd/4.0/}. The three authors were partially supported by the ANR projects CAFEIN (ANR-12-INSE-0007) and MALTHY (ANR-13-INSE-0003), by the PGMO program of EDF and Fondation Math\'ematique Jacques Hadamard, and by
the ``Investissement d'avenir'', r{\'e}f{\'e}rence ANR-11-LABX-0056-LMH,
LabEx LMH. M.~Skomra is supported by a grant from R{\'e}gion Ile-de-France.}
\author{Xavier {A}llamigeon}
\author{St{\'e}phane {G}aubert}
\author{Mateusz Skomra}
\address{INRIA and CMAP, \'Ecole Polytechnique, CNRS, Universit{\'e} Paris--Saclay, 91128 Palaiseau Cedex France}
\email{firstname.lastname@inria.fr}
\keywords{Semidefinite programming, stochastic mean payoff games, nonarchimedean fields, tropical geometry}
\subjclass[2010]{90C22, 91A15, 12J25, 14T05}
\begin{document}

\begin{abstract}
A general issue in computational optimization is to develop combinatorial algorithms for semidefinite programming.
We address this issue when the base field is {\em nonarchimedean}.
We provide a solution for a class of semidefinite
feasibility problems given by generic matrices. Our approach is based on tropical geometry. It relies on tropical spectrahedra, which are defined 
as the images by the valuation of nonarchimedean
spectrahedra. We establish a correspondence between generic tropical spectrahedra and zero-sum stochastic games with perfect information.
The latter have been well studied in algorithmic game
theory. This allows us to solve
nonarchimedean semidefinite feasibility problems
using algorithms for stochastic games. These
algorithms are of a combinatorial nature
and work for large instances.
\end{abstract}

\maketitle

\section{Introduction}
Semidefinite programming consists in optimizing a linear function over
a spectrahedron. The latter is a subset of $\R^n$ defined by linear matrix
inequalities, i.e., a set of the form
\[
\spectra=\{x\in \R^n\colon Q^{(0)}+ x_1 Q^{(1)}+ \dots + x_n Q^{(n)} \loew 0\}
\]
where the $Q^{(k)}$ are symmetric matrices of order $m$, and $\loew$ denotes the
Loewner order on the space of symmetric matrices. By definition, $X\loew Y$
if and only if $X-Y$ is positive semidefinite. 

Semidefinite programming is a fundamental tool in convex optimization.
It is used to solve various applications from engineering sciences, and
also to
obtain approximate solutions or bounds for hard problems arising in combinatorial optimization and semialgebraic
optimization. We refer the reader to~\cite{siam_parrilo}
and~\cite{matousekbook} for information.

Semidefinite programs 
are usually solved via interior point methods. 
The latter provide an approximate solution in a polynomial
number of iterations, provided that a strictly feasible
initial solution, i.e., a point belonging to the interior of the set $\spectra$, is known. We refer the reader to~\cite{deklerk_vallentin}
for a detailed analysis of the complexity in the bit model of interior point
methods for semidefinite programming, and for a discussion
of earlier complexity results based on the ellipsoid method.

Semidefinite programming becomes a much harder matter if one
requires an exact solution. The feasibility problem 
(deciding the emptiness of the set $\spectra$)
belongs to $\NP_{\R} \cap \coNP_{\R}$, where the subscript
$\R$ refers to the BSS model of computation~\cite{ramana_exact_duality}. It is not known
to be in $\NP$ in the bit model. A difficulty here is that
all feasible points may have entries of absolute value doubly
exponential in the size of the input. Also, there may be
no rational solution~\cite{scheiderer}. Beyond their theoretical interest,
exact algorithms for semidefinite programming may be useful to address problems
of formal proofs, which sometimes lead to challenging 
(degenerate) instances~\cite{monniaux_corbineau}. 
Known exact methods rely either
on general purpose semialgebraic techniques (like cylindrical decomposition)
or on dedicated methods based on the computation of critical points, see
the recent work~\cite{exact_lmi} and the references therein. 

Semidefinite programming is meaningful in any real closed field,
and in particular in {\em nonarchimedean} (real closed) fields like the field
of Puiseux series with real coefficients. Such nonarchimedean
semidefinite programming problems arise when considering {\em parametric} semidefinite programming problems over the reals, or structured problems
in which the entries of the matrices have different
orders of magnitudes. They are also of an intrinsic
interest, since, by analogy with the situation in linear programming~\cite{megiddo_lp},
shifting to the nonarchimedean case is expected to shed light on
the complexity of the classical problem over the reals. 
We note that the fields of Puiseux series are representative of the general nonarchimedean situation, since any nonarchimedean  ordered field can be embedded in a field of generalized
(Hahn) formal series~\cite[Th.~5.2.20]{steinberg}.

\subsection*{Description of main results}

We address semidefinite programming in the nonarchimedean case, to which methods from tropical geometry can be applied.
These methods are expected to allow one, in generic situations,
to reduce semialgebraic problems to combinatorial 
problems, involving only the nonarchimedean valuations (leading exponents)
of the coefficients of the input.

We exploit the tropical approach by considering tropical spectrahedra,
defined as the images by the valuation of nonarchimedean spectrahedra. 
Yu~\cite{yu_semidefinite_cone} showed that 
the image of the cone of semidefinite matrices
is obtained by tropicalizing the nonnegativity conditions
of minors of order $1$ and $2$.
More generally, we studied the tropicalization of spectrahedra in the companion paper~\cite{tropical_spectrahedra}. We showed that, 
under a genericity condition, tropical spectrahedra are described by explicit inequalities still arising from the nonnegativity of tropical minors of order $1$ and $2$. We recall these results in Section~\ref{section:spectrahedra}. Moreover, we show that we can reduce the tropical semidefinite feasibility problem to the subproblem in which tropical spectrahedra are defined by matrices with a Metzler sign pattern.

In this paper, we show (Theorem~\ref{theorem:simple_games}) that the feasibility problem for a generic tropical spectrahedron is equivalent to solving a stochastic mean payoff game (with perfect information). The complexity of these games is a long-standing open problem. They are not known to be polynomial, however they belong to the class $\NP \cap \coNP$, and they can be solved efficiently in practice. 

This allows us to apply stochastic game algorithms to solve nonarchimedean
semidefinite feasibility problems
(Section~\ref{sec-algo}). We obtain in this way both theoretical
bounds and a practicable method which solves some large scale
instances.  

\subsection*{Related work}

This work originates from the equivalence between
tropical polyhedral feasibility problems and deterministic zero-sum games with mean payoff, established by Akian, Gaubert, and Guterman~\cite{polyhedra_equiv_mean_payoff}. 
The novelty here is the handling of nonlinear semialgebraic convex
problems, and the proof that the well-known class of
{\em stochastic} mean payoff games
correspond precisely to semidefinite feasibility problems with a Metzler
structure, therefore relating two classes of problems which both have
an unsettled complexity.

Moreover, as mentioned above, the computation of exact solutions of semidefinite programming problems is of current interest. In particular, Nie, Ranestad, and Sturmfels~\cite{algebraic_degree_sdp} provided
complexity measures based on the notion of algebraic degree,
and dedicated algorithms have been developed
by Henrion, Naldi, and Safey El Din~\cite{exact_lmi,naldi_rank_constrained}. 

\section{Statement of the problem and illustration of our approach}\label{section:illustration}

A convenient choice of nonarchimedean structure, which we make in this paper, is the \emph{field $\puiseux$ of (absolutely convergent generalized real) Puiseux series}, which is composed of functions of a real positive parameter $t$ of the form
\begin{equation}
\bo x = \sum_{i = 1}^{\infty} c_{\lambda_{i}}t^{\lambda_{i}} \, , \label{eq:series}
\end{equation}
where $(\lambda_{i})_{i \ge 1}$ is a strictly decreasing sequence of real numbers that is either finite or unbounded, $c_{\lambda_{i}} \in \R \setminus \{ 0\}$ for all $\lambda_{i}$, and the latter series is required to be absolutely convergent
for $t$ large enough. There is also a special, empty series, which is denoted by $0$. 
We denote by $\lc(\bo x)$ the coefficient $c_{\lambda_{1}}$ of the leading term in the series $\bo x$, with the convention that $\lc(0) = 0$. The addition and multiplication in $\puiseux$ are defined in a natural way. Moreover, $\puiseux$ can be endowed with a linear order $\ge$, which is defined as $\bo x \ge \bo y$ if $\lc(\bo x - \bo y) \ge 0$.  We denote $\nnpuiseux$ the set of nonnegative series $\bo x$, i.e.,~satisfying $\bo x \geq 0$. The \emph{valuation} of an element $\bo x \in \puiseux$ as in~\eqref{eq:series} is defined as the greatest exponent $\lambda_{1}$ occurring in the series. Equivalently, the valuation is given by 
\[
\lim_{t \to +\infty} \log_t \abs{\bo x(t)} \, ,
\]
where $\log_t(z) \coloneqq \log(z) / \log(t)$. Up to a change of variables, $\puiseux$ coincides with the field of {\em generalized Dirichlet series} introduced by Hardy and Riesz~\cite{hardy}. Indeed, ordinary Dirichlet series are obtained by setting $t \coloneqq \exp(s)$ and $\lambda_i \coloneqq -\log i$. 
Van den Dries and Speissegger showed, among other results,
that $\puiseux$ is real closed~\cite[Corollary~9.2 and 
Section~10.2]{van_dries_power_series}.
The familiar field of ordinary, convergent Puiseux series, is obtained by requiring the sequence $\lambda_i$ to
consist of rational numbers included in an arithmetic progression.
Working with the larger field $\puiseux$, whose value
group is $\R$ instead of $\Q$, leads to more transparent results. 

In this paper, we consider the semidefinite feasibility problem over the field $\puiseux$. It is more convenient to start with the homogeneous case. More precisely, given symmetric matrices $\bo Q^{(1)}, \dots, \bo Q^{(n)} \in \puiseux^{m \times m}$, we focus on the problem of determining whether or not the following spectrahedral cone
\[
\{ \bo x \in \nnpuiseux^n \colon \bo x_1 \bo Q^{(1)} + \dots + \bo x_n \bo Q^{(n)} \loew 0 \} \, 
\]
is \emph{trivial}, meaning that it is reduced to the identically null point of $\nnpuiseux^n$. The linear map $\bo x \mapsto \bo x_1 \bo Q^{(1)} + \dots + \bo x_n \bo Q^{(n)} $ is a \emph{matrix pencil}, which we denote by $\bo Q(\bo x)$ for more brevity. The aforementioned decision problem is in fact equivalent to the nontriviality problem of general spectrahedral cones in which the nonnegativity condition $\bo x \in \nnpuiseux^n$ is relaxed. Indeed, given a pencil $\bo Q(\bo x)$ of symmetric matrices, the spectrahedron $\{\bo x \in \puiseux^n \colon \bo Q(\bo x) \loew 0\}$ is trivial if and only if the spectrahedron $\{(\bo y, \bo z) \in \nnpuiseux^{2n} \colon (\bo y_1 - \bo z_1) \bo Q^{(1)} + \dots + (\bo y_n - \bo z_n) \bo Q^{(n)} \loew 0 \}$ is trivial. Even if the instances arising in this way are unlikely to be generic in the sense we discuss later in Section~\ref{section:spectrahedra}, we consider that the decision problem which we focus on already retains much of the complexity of the semidefinite feasibility problem over the field of Puiseux series. As we shall see in Section~\ref{sec:nonconic}, our method can be extended to handle the feasibility problem for affine spectrahedra given by the relation $\{\bo x \in \nnpuiseux^{n} \colon \bo Q^{(0)} +  \bo x_1 \bo Q^{(1)} + \dots + \bo x_n \bo Q^{(n)} \loew 0 \}$.

\begin{figure}[t]
\centering
\begin{tikzpicture}[scale=0.75,>=stealth',max/.style={draw,rectangle,minimum size=0.5cm},min/.style={draw,circle,minimum size=0.5cm},av/.style={draw, circle,fill, inner sep = 0pt,minimum size = 0.2cm}]

\node[min] (min3) at (-5, 0.8) {$3$};
\node[min] (min1) at (5, 0.4) {$1$};
\node[min] (min2) at (0.5, 1) {$2$};

\node[max] (max1) at (0, -1.2) {$1$};
\node[max] (max2) at (-1, 2) {$2$};
\node[max] (max3) at (-1, 0) {$3$};

\node[av] (av13) at (-2.5,-0.5){};
\node[av] (av23) at (-2.5, 1){};
\node[av] (av12) at (3, 0.4){};

\draw[->] (max3) to node[below right=-1ex, font=\small]{$9/4$} (min2);
\draw[->] (min2) to node[below, font=\small]{$0$} (max2);
\draw[->] (max2) to[out = 170, in = 40] node[above left=-1ex, font=\small]{$-5/4$} (min3);
\draw[->] (max2) to[out = 25, in = 140] node[above, font=\small]{$-1$} (min1);

\draw[-] (min3) to node[above right=-0.7ex and -1.8ex, font=\small]{$-3/4$} (av13);
\draw[-] (min3) to node[above right, font=\small]{$0$} (av23);
\draw[-] (min1) to node[below, font=\small]{$0$} (av12);

\draw[->] (max1) to[out = 180, in = -90] node[above, font=\small]{$1$} (min3);

\draw[->] (av13) to (max1);
\draw[->] (av13) to (max3);
\draw[->] (av23) to (max2);
\draw[->] (av23) to (max3);
\draw[->] (av12) to[out=-120, in = 0] (max1);
\draw[->] (av12) to[out=130, in = 0] (max2);

\end{tikzpicture}
\caption{Stochastic game associated with the spectrahedron considered in Section~\ref{section:illustration}.}\label{fig:example_of_a_game}
\end{figure}

Our approach is best explained when the off-diagonal entries of the 
matrices
$\bo Q^{(1)}$, \dots, $\bo  Q^{(n)}$
are nonpositive. We associate to these matrices
the following zero-sum game.
There are two players, Player Min and Player Max, who
control disjoint sets of states. 
The states of Player Min can be identified to the variables $\bo x_1,\dots,\bo x_n$. The states of Player Max can be identified to the row (or column) indices of the matrices, i.e., to the elements of $\{1,\dots,m\}$.
In state $\bo x_k$, Player Min chooses a subset $\{i,j\}\subset \{1,\dots,m\}$ 
such that the entry $\bo Q^{(k)}_{ij}$ is negative. Next, Player Min pays to Player
Max the opposite of the valuation of $\bo Q^{(k)}_{ij}$. Then,
Nature selects one element among $i$ or $j$ at random, 
with uniform probabilities (i.e., $i$ or $j$ is drawn with 
probability $1/2$).\footnote{We allow the case $i = j$. In other words, Player Min can choose a subset $\{ i\} \subset \{1,\dots,m\}$ such that $\bo Q^{(k)}_{ii}$ is negative. In this case, Nature selects $i$.} If $i$ is drawn, meaning that the current state
is now $i$, Player Max chooses
a variable $\bo x_l$ such that $\bo Q^{(l)}_{ii}$ has a positive sign. He receives the valuation of $\bo Q^{(l)}_{ii}$ from Player Min,
and the next state becomes $\bo x_l$. 
If $j$ is drawn, the rule of move and the payment are identical, up to the replacement of $i$ by $j$. 
A (stationary) policy of one player is a map which associates to a state
of the player an admissible move.
We are interested in infinite plays, with an infinite number of turns.
Player Max looks for a policy which maximizes
the mean payment received from Player Min per time unit, while Player Min
looks for a policy which minimizes it.

We can think informally of this construction as follows:
Player Min wishes to show
that the semidefinite programming problem is infeasible,
whereas Player Max wishes to show that it is feasible. 

Let us now illustrate our approach on an example in dimension $3$. We consider the following pencil of symmetric matrices
\[
\bo Q(\bo x) \coloneqq
\begin{bmatrix}
t \bo x_{3} & - \bo x_{1} & -t^{3/4} \bo x_{3}\\
-\bo x_{1} & t^{-1} \bo x_{1} + t^{-5/4} \bo x_{3} - \bo x_{2} & - \bo x_{3}\\
-t^{3/4}\bo x_{3} & - \bo x_{3} & t^{9/4} \bo x_{2}
\end{bmatrix} \, ,
\]
and we aim at checking whether or not the spectrahedron $\{ \bo x \in \nnpuiseux^3 \colon \bo Q (\bo x) \loew 0 \}$ is trivial. 
The associated game
is depicted in Figure~\ref{fig:example_of_a_game}.
The states of Player Min are depicted by circles.
The states of Player Max are depicted by squares.
The states in which Nature plays are depicted by full dots.
The admissible moves of the game are represented by the edges between the states. The corresponding payments received by Player Max are indicated on these edges.

Observe that both players in this game have only two policies: at state \maxstate{2} Player Max can choose the move that goes to $\minstate{1}$ or the one that goes to $\minstate{3}$, whereas at state \minstate{3} Player Min can choose the move $\bigl\{ \maxstate{1}, \maxstate{3} \bigr\}$ or the move $\bigl\{ \maxstate{2}, \maxstate{3} \bigr\}$.

Suppose that Player Max chooses the policy which goes to state $\minstate{1}$ from state $\maxstate{2}$. If Player Min plays using the move $\bigl\{ \maxstate{1}, \maxstate{3} \bigr\}$ at state $\minstate{3}$, then a standard computation on Markov chain 
(which we present in Example~\ref{ex:payoff_computation}) 
shows that the long-term average payoff of Player Max is equal to $3/40$. Similarly, if Player Min chooses the move $\bigl\{ \maxstate{2}, \maxstate{3} \bigr\}$ instead, then the payoff of Player Max is equal to $1/56$. Therefore, Player Max has a policy which guarantees him to win the game, \ie, to obtain a nonnegative payoff. The main theorem of this paper states that this fact is equivalent to the nontriviality of the spectrahedron~$\{ \bo x \in \nnpuiseux^3 \colon \bo Q (\bo x) \loew 0 \}$. As we shall see, a winning policy of Player Max (resp.\ Min) provides a feasibility (resp.\ infeasibility) certificate.
The mean payoff represents a feasibility/infeasibility margin.

\section{Preliminary results on tropical spectrahedra}\label{section:spectrahedra}

\subsection{Tropical algebra}\label{section:tropical_algebra}

In this section, we recall some basic concepts of tropical algebra.

The \emph{tropical semifield} $\trop$ is the set
$\Rinfty$, endowed with the addition
$x \tplus y \coloneqq \max(x,y)$ and the multiplication $x \tdot y \coloneqq x + y$. 
The term semifield refers to the fact that the addition does not have
an opposite law.
We use the notation $\tsum_{i = 1}^{n}a_{i} = a_{1} \tplus \dots \tplus a_{n}$ and $a^{\tdot n} = a \tdot \dots \tdot a$ ($n$ times). We also endow $\trop$ with the standard order $\le$. 
The reader may consult~\cite{butkovic,maclagan_sturmfels} for more information on the tropical semifield.

We denote by $\val \colon \puiseux \to \Rinfty$ the map which associates a Puiseux series $\bo x \in \puiseux$ to its valuation. We use the convention $\val(0) = -\infty$.  It is immediate to see that the map $\val$ satisfies the following properties
\begin{align}
\val(\bo x + \bo y) &\leq \max( \val (\bo x ), \val (\bo y))
\label{e-val}\\
\qquad 
\val(\bo x  \bo y) &= \val (\bo x ) + \val (\bo y)
\label{e-val2}
\end{align}
meaning that $\val$ is a {\em nonarchimedean valuation}. Moreover, the equality
holds in~\eqref{e-val} if the leading terms of $\bo x$ and $\bo y$ do not cancel, which is the case if $\val (\bo x ) \neq \val (\bo y)$ or if
$\bo x, \bo y\geq 0$.
In particular, the map $\val$ yields an order-preserving
morphism of semifields from $\nnpuiseux$ to $\trop$. 

 The \emph{sign} of a series $\bo x\in \puiseux$ is equal to $+1$ if $\bo x >0$, $-1$ if $\bo x<0$, and $0$ otherwise. 
The \emph{signed valuation}, denoted by $\sval$,
associates with a series $\bo x\in\puiseux$ the couple
$(\sign (\bo x), \val (\bo x))$.
We denote by $\strop$ the
image of $\puiseux$ by $\sval$. We call it the set
of \emph{signed tropical numbers}. For brevity,
we denote an element of the form $(\epsilon,a)$ by $a$
if $\epsilon =1$, $\tminus a$ if $\epsilon =-1$,
and $-\infty$ if $\epsilon =0$. Here, $\tminus$ is a formal
symbol. 
 We call the elements of the first and second kind
the \emph{positive} and \emph{negative} tropical numbers, respectively.
We denote by $\postrop$ and $\negtrop$ the corresponding sets.
In this way, $(-2)$ is tropically positive, but $\tminus (-2)$ is tropically negative. Also, $\trop$ is embedded in $\strop$, i.e., $\trop = \postrop \cup \{ \zero\}$. 
In $\strop$, we define a \emph{modulus} function, $\abs{\cdot} \colon \strop \to \trop$, as $\abs{\zero} = \zero$ and $\abs{a} = \abs{\tminus a} = a$ for all $a \in \postrop$. We point out that $\tdot$ straightforwardly extends
to $\strop$ using the standard rules for the sign, for instance $2\tdot (\tminus 3)= \tminus 5$. In contrast, we only partially extend the tropical addition $\tplus$ to elements of $\strop$ of identical sign, e.g., $2 \tplus 3 = 3$ and $(\tminus 2) \tplus (\tminus 3) = \tminus 3$.
It is possible to embed $\strop$ in an idempotent semiring, called
the symmetrized tropical semiring~\cite{guterman}, so that the tropical addition
becomes defined for all elements of $\strop$.
Alternatively, this addition may be defined by working in the setting
of hyperfields~\cite{virohyperfields,connesconsani,baker}. 
Here, we shall only use $\strop$ 
as a convenient notation, and we do not rely on an algebraic structure on $\strop$.

We shall extend the valuation maps $\val$ and $\sval$ to vectors and matrices in a coordinate-wise manner.

Finally, we use the notion of tropical polynomials. A \emph{tropical (signed) polynomial} over the variables $X_1, \dots, X_n$ is a formal expression of the form
\begin{align}
P(X) = \tsum_{\alpha \in \Lambda} a_{\alpha} \tdot X_{1}^{\tdot \alpha_{1}} \tdot  \dots \tdot X_{n}^{\tdot \alpha_{n}} \, ,\label{e-def-P}
\end{align}
where $\Lambda \subset \{0, 1, 2, \dots \}^{n}$, and $a_{\alpha} \in \strop \setminus \{\zero\}$ for all $\alpha \in \Lambda$. 
We say that the tropical polynomial $P$ \emph{vanishes} on the point $x\in \strop^n$ if the terms $a_{\alpha} \tdot x_{1}^{\tdot \alpha_{1}} \tdot  \dots \tdot x_{n}^{\tdot \alpha_{n}}$ which have the greatest modulus do not have the same sign. If $P$ does not vanish on $x$, we define $P(x)$ as the tropical sum of the terms which have the greatest modulus. As an example, if $P(X)=2\tdot X_1^{\tdot 3}\tdot X_2^{\tdot 4}\tplus (\tminus 0) \tdot X_2$, 
then $P(1,\tminus 5)=25$, $P(1,-5)= \tminus (-5)$, whereas 
$P$ vanishes on $(1,-5/3)$. These definitions are motivated by the following
immediate observation.
Suppose that
\begin{equation} 
\bo P(X)= \sum_{\alpha \in \Lambda} \bo a_{\alpha} X_{1}^{\alpha_{1}} \dots
X_{n}^{\alpha_{n}} \in \puiseux[X_1,\dots,X_n] \label{eq:polynomial}
\end{equation}
and let $P$ be defined as in~\eqref{e-def-P} with 
$a_\alpha\coloneqq\sval (\bo a_\alpha)$. 
Then, for all $\bo x\in \puiseux^n$,
\[
\sval (\bo P(\bo x)) = P(\sval (\bo x)) \, ,
\]
provided that $P$ does not vanish on $\sval (\bo x)$. 

Given a polynomial $\bo P$ as in~\eqref{eq:polynomial}, we denote by $\bo P^+$ the polynomial formed by the terms $\bo a_{\alpha} X_{1}^{\alpha_{1}} \dots
X_{n}^{\alpha_{n}}$ such that $\bo a_\alpha > 0$. Similarly, $\bo P^-$ refers to the polynomial consisting of the terms $-\bo a_{\alpha} X_{1}^{\alpha_{1}} \dots X_{n}^{\alpha_{n}}$ verifying $\bo a_\alpha < 0$. In this way, $\bo P = \bo P^+ - \bo P^-$. We also use the analogues of these polynomials in the tropical setting. If $P$ is the tropical polynomial given in~\eqref{e-def-P}, we define $P^+$ (resp.\ $P^-$) as the tropical polynomial generated by the terms $\abs{a_\alpha} \tdot X_{1}^{\tdot \alpha_{1}} \tdot  \dots \tdot X_{n}^{\tdot \alpha_{n}}$ where $a_\alpha \in \postrop$ (resp.\ $\negtrop$). Observe that the quantities $P^+(x)$ and $P^-(x)$ are well defined for all $x \in \trop^n$, since the tropical polynomials $P^+$ and $P^-$ only involve tropically positive coefficients.

Throughout the paper, we denote the set $\{1, \dots, k\}$ by~$[k]$.

\subsection{Tropicalization of nonarchimedean spectrahedra} We now discuss the class of tropical spectrahedra. These objects were introduced in our companion paper~\cite{tropical_spectrahedra}. This paper also contains the detailed proofs of all the results presented in this section.

\begin{definition}
A set $\spectra \subset \trop^n$ is said to be a \emph{tropical spectrahedron} if there exists a spectrahedron $\bspectra \subset \nnpuiseux^{n}$ such that $\spectra = \val(\bspectra)$.
\end{definition}
If $\spectra = \val(\bspectra)$, then we refer to $\spectra$ as the \emph{tropicalization} of the spectrahedron $\bspectra$, and $\bspectra$ is said to be a \emph{lift (over the field $\puiseux$)} of~$\spectra$. Checking the triviality of a spectrahedral cone $\bspectra \subset \nnpuiseux^n$ is equivalent to determining whether or not the corresponding tropical spectrahedron $\spectra = \val(\bspectra)$ is \emph{trivial}, \ie, is reduced to the singleton $\{(-\infty, \dots, -\infty)\}$. Therefore, it is convenient to exploit some explicit description of the set $\spectra$. As stated in Theorem~\ref{theorem:generic_metzler}, such a description can be obtained from the tropical minors of order $1$ and $2$, provided that the valuation of the coefficients of the matrices $\bo Q^{(k)}$ is generic. To this purpose, given $i, j \in [m]$, we denote by $Q_{ij}(X)$ the tropical polynomial:
\[
Q_{ij}(X) \coloneqq Q^{(1)}_{ij} \tdot X_{1} \tplus \dots \tplus Q^{(n)}_{ij} \tdot X_{n} \, .
\]
\begin{definition}\label{def:spectrahedron}
Let $Q^{(1)}, \dots, Q^{(n)} \in \strop^{m \times m}$ be symmetric tropical matrices. We introduce the set $\spectra(Q^{(1)}, \dots, Q^{(n)})$ (or simply $\spectra$) of points $x \in \trop^{n}$ that fulfill the following two conditions:
\begin{compactitem}
\item for all $i \in [m]$, $Q_{ii}^{+}(x) \ge Q_{ii}^{-}(x)$;
\item for all $i,j \in [m]$, $i <j$, we have $Q_{ii}^{+}(x) \tdot Q_{jj}^{+}(x) \ge (Q^+_{ij}(x) \tplus Q^-_{ij}(x))^{\tdot 2}$ or $Q_{ij}^{+}(x) = Q_{ij}^{-}(x)$.
\end{compactitem}
\end{definition}

Given $d \geq 1$, the \emph{support} of a point $y \in \trop^d$ is defined as the set of indices $k \in [d]$ such that $y_k \neq \zero$. Given a nonempty subset $K \subset [d]$, and a set $Y \subset \trop^d$, we define the \emph{stratum of $Y$ associated with $K$} as the subset of $\R^K$ formed by the projection $(y_k)_{k \in K}$ of the points $y \in Y$ with support~$K$. We say that a set $Y \subset \trop^{d}$ is \emph{negligible} if every stratum of $Y$ has Lebesgue measure zero.

\begin{theorem}[{\cite[Theorems~32 and~37]{tropical_spectrahedra}}]\label{theorem:generic_metzler}
There exists a negligible set $Y \subset \trop^{d}$ with $d = nm(m+1)/2$ such that the following property holds. Suppose that $\bo Q^{(1)}, \dots, \bo Q^{(n)} \in \puiseux^{m \times m}$ is a sequence of symmetric matrices and let $\bspectra = \{ \bo x \in \nnpuiseux^n \colon \bo Q(\bo x) \loew 0 \}$ be the associated spectrahedron. Denote $Q^{(k)} = \sval(\bo Q^{(k)})$ for all $k$. If the vector with entries $\abs{Q^{(k)}_{ij}}$ (for $i \le j$) does not belong to $Y$, then we have
\[
\val(\bspectra) = \spectra(Q^{(1)}, \dots, Q^{(n)}) \, .
\]
\end{theorem}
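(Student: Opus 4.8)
\emph{Overall strategy.} The natural backbone is the classical fact, valid over any real closed field by transfer from $\R$, that a symmetric matrix is positive semidefinite if and only if each of its principal minors is nonnegative. Applied to $\bo Q(\bo x)$, this turns membership in $\bspectra$ into a system of polynomial inequalities in $\bo x$, whose order-$1$ and order-$2$ members will tropicalize to exactly the conditions of Definition~\ref{def:spectrahedron}, while genericity of the moduli will be used to make the higher-order members redundant once the lift of $x$ is chosen appropriately. I would prove the two inclusions separately.

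\emph{The inclusion $\val(\bspectra) \subseteq \spectra$ (no genericity needed).} Let $\bo x \in \bspectra$ and $x = \val(\bo x)$. The order-$1$ and order-$2$ principal minors of $\bo Q(\bo x)$ give $\bo Q_{ii}(\bo x) \ge 0$ and $\bo Q_{ii}(\bo x)\bo Q_{jj}(\bo x) \ge \bo Q_{ij}(\bo x)^2$. Writing $\bo Q_{ii} = \bo Q_{ii}^{+} - \bo Q_{ii}^{-}$, the series $\bo Q_{ii}^{\pm}(\bo x)$ are sums of products of nonnegative series, so on them $\val$ acts additively and multiplicatively with no cancellation of leading terms; hence $\val(\bo Q_{ii}^{+}(\bo x)) = Q_{ii}^{+}(x)$, likewise for the minus part, and the first inequality yields $Q_{ii}^{+}(x) \ge Q_{ii}^{-}(x)$. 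For the second: either $Q_{ij}^{+}(x) = Q_{ij}^{-}(x)$, and the second disjunct of Definition~\ref{def:spectrahedron} holds; or the two moduli differ, the leading terms of $\bo Q_{ij}^{+}(\bo x)$ and $\bo Q_{ij}^{-}(\bo x)$ cannot meet, so $\val(\bo Q_{ij}(\bo x)) = Q_{ij}^{+}(x) \tplus Q_{ij}^{-}(x)$, and combining with $\val(\bo Q_{ii}(\bo x)) \le Q_{ii}^{+}(x)$ and $\val(\bo Q_{jj}(\bo x)) \le Q_{jj}^{+}(x)$ gives $Q_{ii}^{+}(x) \tdot Q_{jj}^{+}(x) \ge (Q_{ij}^{+}(x) \tplus Q_{ij}^{-}(x))^{\tdot 2}$.

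\emph{The inclusion $\spectra \subseteq \val(\bspectra)$.} Fix $x \in \spectra$. I would look for a lift with $\bo x_k = 0$ when $x_k = \zero$ and $\bo x_k = c_k t^{x_k} + (\text{lower order terms})$ when $x_k \in \R$, with positive reals $c_k$ and correction terms to be chosen so that $\bo Q(\bo x) \succeq 0$. By the principal-minor criterion it suffices to make every principal minor $\det \bo Q_{II}(\bo x)$ nonnegative. The valuation of such a minor is at most the tropical permanent $\max_{\sigma} \sum_{i \in I}\bigl(Q_{i\sigma(i)}^{+}(x)\tplus Q_{i\sigma(i)}^{-}(x)\bigr)$ over permutations $\sigma$ of $I$; for moduli outside a negligible set this maximum is attained at the identity permutation, and typically only there, since any competing $\sigma$ splits into cycles and summing the order-$2$ inequality of Definition~\ref{def:spectrahedron} around each cycle shows the diagonal term is at least as large. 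When the identity is the strict optimum, the leading coefficient of $\det \bo Q_{II}(\bo x)$ is the product of the (positive) leading coefficients of the $\bo Q_{ii}(\bo x)$, so the minor is positive; indices $i$ with $\bo Q_{ii}(\bo x) = 0$ force, via the order-$2$ condition, the whole $i$-th row of $\bo Q(\bo x)$ to vanish and are deleted, reducing $m$.

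\emph{The main obstacle.} The difficulty concentrates on the ``disjunct-$2$'' pairs, where $Q_{ij}^{+}(x) = Q_{ij}^{-}(x)$: there the bound on $\val(\bo Q_{ij}(\bo x))$ need not be tight, a competing permutation may tie the diagonal one, and—worse—if $Q_{ii}^{+}(x)\tdot Q_{jj}^{+}(x)$ fails to dominate $(Q_{ij}^{+}(x)\tplus Q_{ij}^{-}(x))^{\tdot 2}$ the order-$2$ minor is negative unless the leading terms of $\bo Q_{ij}^{+}(\bo x)$ and $\bo Q_{ij}^{-}(\bo x)$ are made to cancel. Hence the lift cannot be arbitrary: one must choose the $c_k$ (and, cascading, the lower-order corrections) so that precisely the required cancellations occur in all affected entries and minors at once. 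I expect this to be the technical heart, handled either by an explicit inductive construction of the lift, or by first proving the inclusion for $x$ in the relative interiors of the strata of $\spectra$—where all relevant inequalities are strict, the identity permutation is the unique optimum of every tropical minor, and positivity is automatic—and then passing to the closure using that $\bspectra$ is closed. The set $Y$ is then the finite union of the zero loci, in the moduli $\abs{Q^{(k)}_{ij}}$, of the affine forms encoding the optimality ties arising in the cycle-exchange argument and in the minor permanents; checking that avoiding $Y$ suffices for all sign patterns and all $x$ is the remaining bookkeeping.
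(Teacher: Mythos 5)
Note first that the paper does not prove Theorem~\ref{theorem:generic_metzler}: it is imported verbatim from the companion paper \cite{tropical_spectrahedra} (Theorems~32 and~37), so the comparison below is with the argument there, as reflected in the pieces of it that the present paper does quote. Your inclusion $\val(\bspectra)\subseteq\spectra$ is correct and is the standard one (tropicalize the order-$1$ and order-$2$ minors, using that $\val$ is a semifield morphism on $\nnpuiseux$ and the dichotomy on whether $Q_{ij}^{+}(x)=Q_{ij}^{-}(x)$). No genericity is needed there, as you say.

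The gap is in the reverse inclusion, and it sits exactly where you wrote ``remaining bookkeeping.'' Your plan requires that, for data outside a negligible set, the identity permutation be the \emph{unique} optimum of the tropical permanent of every principal submatrix \emph{for every} $x\in\spectra$. This cannot be arranged by genericity of the moduli $\abs{Q^{(k)}_{ij}}$ alone: $x$ ranges over a continuum, and at boundary points of $\spectra$ the order-$2$ inequality $Q_{ii}^{+}(x)\tdot Q_{jj}^{+}(x)\ge(Q_{ij}(x))^{\tdot 2}$ is tight for some pair, so the transposition $(i\,j)$ ties the identity no matter how generic the data are. Hence $Y$ cannot be ``the zero loci of the affine forms encoding the optimality ties''; those forms involve $x$. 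Your fallback --- prove the inclusion at strict points and pass to the closure --- presupposes (i) that $\val(\bspectra)$ is closed and (ii) that every $x\in\spectra$ is a limit of points satisfying the inequalities with positive margin. Property (ii) is precisely what fails for non-generic data and is the actual content of the genericity hypothesis: in \cite{tropical_spectrahedra} the hyperplanes making up $Y$ come from conditions expressing the absence of flows in directed hypergraphs attached to the pattern of tight constraints, engineered so that boundary points still admit lifts. Treating this as bookkeeping elides the theorem's substance. Two further points: even at strict points your argument uses $\val(\bo Q_{ii}(\bo x))=Q_{ii}^{+}(x)$, which fails when $Q_{ii}^{+}(x)=Q_{ii}^{-}(x)$ and the leading diagonal terms cancel; and the route actually taken avoids principal minors of order $\ge 3$ altogether, via the diagonal-dominance sandwich $\bspectra_{2,(m-1)^{2}}\subseteq\bspectra\subseteq\bspectra_{2,1}$ of Lemma~\ref{lemma:approximate_spectra} together with Lemma~\ref{lemma:lift}, so that only order-$1$ and order-$2$ data with a reinforcing constant need to be controlled. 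Your permanent-plus-cycle-exchange idea is a legitimate alternative skeleton (it is how one proves the analogous statement for the PSD cone itself, as in Yu's result), but as written it does not yield the theorem.
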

The set $Y$ from Theorem~\ref{theorem:generic_metzler} can be constructed explicitly, as a finite union of hyperplanes. Every hyperplane arises from a condition expressing the absence of a flow in a certain directed hypergraph associated with $\spectra(Q^{(1)}, \dots, Q^{(n)})$. This construction is detailed in \cite[Section~5.4]{tropical_spectrahedra}. Nevertheless, the number of directed hypergraphs (and subsequently, of hyperplanes) to be considered is exponential. For this reason, we do not give an explicit description of $Y$.

One important special case is when the matrices $\bo Q^{(k)}$ are Metzler matrices. Recall that a matrix is called \emph{(negated) Metzler matrix} if its off-diagonal coefficients are nonpositive. Similarly, we say that a matrix $M \in \strop^{m \times m}$ is a \emph{tropical Metzler matrix} if $M_{ij} \in \negtrop \cup \{\zero\}$ for all $i \neq j$. Under the assumption that the matrices $Q^{(k)}$ are tropical Metzler matrices, Definition~\ref{def:spectrahedron} gets slightly simpler:

\begin{lemma}\label{lemma:metzler_spectrahedra}
Suppose that the matrices $Q^{(1)}, \dots, Q^{(n)} \in \strop^{m \times m}$ are symmetric tropical Metzler matrices. Then the set $\spectra(Q^{(1)}, \dots, Q^{(n)})$ consists of the points $x \in \trop^{n}$ such that:
\begin{compactitem}
\item for all $i \in [m]$, $Q_{ii}^{+}(x) \ge Q_{ii}^{-}(x)$;
\item for all $i,j \in [m]$, $i <j$, $Q_{ii}^{+}(x) \tdot Q_{jj}^{+}(x) \ge (Q_{ij}(x))^{\tdot 2}$.
\end{compactitem}
\end{lemma}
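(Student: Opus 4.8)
The plan is to unwind Definition~\ref{def:spectrahedron} under the tropical Metzler hypothesis and check that the second bullet point collapses to the claimed inequality; the first bullet point is identical in Definition~\ref{def:spectrahedron} and in the lemma, so nothing needs to be done there. The only input used about the matrices is the off-diagonal sign pattern: for $i \neq j$ every coefficient $Q^{(k)}_{ij}$ lies in $\negtrop \cup \{\zero\}$, so the signed tropical polynomial $Q_{ij}(X) = Q^{(1)}_{ij}\tdot X_1 \tplus \dots \tplus Q^{(n)}_{ij}\tdot X_n$ has no tropically positive monomial. Hence $Q^+_{ij}$ is the empty tropical polynomial, $Q^+_{ij}(x) = \zero$ for every $x \in \trop^n$, and therefore $Q^+_{ij}(x)\tplus Q^-_{ij}(x) = Q^-_{ij}(x)$.

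Next I would evaluate $(Q_{ij}(x))^{\tdot 2}$ directly. Because all monomials of $Q_{ij}$ have negative (or $\zero$) value, the monomials of greatest modulus never disagree in sign, so $Q_{ij}$ does not vanish on any point and $Q_{ij}(x) = \tminus\bigl(Q^-_{ij}(x)\bigr)$ whenever $Q^-_{ij}(x) \neq \zero$ (and $Q_{ij}(x) = \zero$ otherwise, a degenerate case covering empty pencils or points $x$ supported away from the entries of $Q_{ij}$). Applying the sign rule for $\tdot$ gives $(Q_{ij}(x))^{\tdot 2} = (\tminus Q^-_{ij}(x))\tdot(\tminus Q^-_{ij}(x)) = \bigl(Q^-_{ij}(x)\bigr)^{\tdot 2}$, a tropically positive number; and the same identity $(Q_{ij}(x))^{\tdot 2} = (Q^-_{ij}(x))^{\tdot 2}$ holds trivially when $Q^-_{ij}(x) = \zero$. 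Combined with the previous paragraph, $\bigl(Q^+_{ij}(x)\tplus Q^-_{ij}(x)\bigr)^{\tdot 2} = (Q_{ij}(x))^{\tdot 2}$, so the inequality appearing in Definition~\ref{def:spectrahedron} is exactly $Q^+_{ii}(x)\tdot Q^+_{jj}(x) \ge (Q_{ij}(x))^{\tdot 2}$.

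It then remains to eliminate the disjunctive alternative ``$Q^+_{ij}(x) = Q^-_{ij}(x)$'' from Definition~\ref{def:spectrahedron}. Since $Q^+_{ij}(x) = \zero$, this alternative reads $Q^-_{ij}(x) = \zero$, in which case $(Q_{ij}(x))^{\tdot 2} = \zero$ and the inequality $Q^+_{ii}(x)\tdot Q^+_{jj}(x) \ge \zero$ holds automatically, every element of $\trop$ being $\ge \zero$. Thus whenever the alternative holds so does the inequality, and the disjunction is equivalent to the inequality alone, which is precisely the second bullet point of the lemma. I do not expect a real obstacle here: the argument is a routine unwinding of the definitions, and the only points needing a little care are the bookkeeping of the absorbing element $\zero$ (points $x$ with some $x_k = \zero$, or identically-$\zero$ off-diagonal pencils) and the remark that a tropical polynomial with monomials of a single sign never tropically vanishes — both immediate from the Metzler sign pattern.
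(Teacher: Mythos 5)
Your proof is correct and is exactly the routine unwinding of Definition~\ref{def:spectrahedron} that the paper treats as immediate (it states the lemma without proof, deferring to the companion paper): under the Metzler sign pattern $Q^+_{ij}$ is empty, $Q_{ij}$ never tropically vanishes, $(Q^+_{ij}(x)\tplus Q^-_{ij}(x))^{\tdot 2}=(Q_{ij}(x))^{\tdot 2}$, and the disjunct $Q^+_{ij}(x)=Q^-_{ij}(x)$ degenerates to the trivially satisfied case $Q^-_{ij}(x)=\zero$. Your handling of the $\zero$ bookkeeping is also the right level of care; nothing is missing.
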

Observe that in the lemma above, the term $Q_{ij}(x)$ ($i \neq j$) is well defined for any $x \in \trop^n$ thanks to the Metzler property of the matrices $Q^{(k)}$. We can equivalently rewrite the constraints defining $\spectra$ using classical notation as follows: for all $i \in [m]$, 
\begin{equation}
\max_{Q^{(k)}_{i i} \in \postrop} \bigl(Q^{(k)}_{i i} + x_k \bigr)
\geq 
\max_{Q^{(l)}_{i i} \in \negtrop} \bigl(\abs{Q^{(l)}_{i i}} + x_l \bigr)\, ,\label{eq:first_kind}
\end{equation}
and for all $i, j \in [m]$ such that $i < j$,
\begin{equation}
\begin{multlined}
\max_{Q^{(k)}_{i i} \in \postrop} \bigl(Q^{(k)}_{i i} + x_k\bigr) + \max_{Q^{(k')}_{j j} \in \postrop} \bigl(Q^{(k')}_{j j} + x_{k'} \bigr)
\geq 2 \max_{l \in [n]} \bigl(\abs{Q^{(l)}_{i j}} + x_l \bigr)\, .
\end{multlined} \label{eq:second_kind}
\end{equation}

When the matrices $Q^{(k)}$ are Metzler, we refer to the set $\spectra$ as a \emph{tropical Metzler spectrahedron}. This terminology relies on the fact that in this case we can build a spectrahedron in $\nnpuiseux^n$ which is a lift of $\spectra$~\cite[Proposition~23]{tropical_spectrahedra}. We point out that if we restrict our considerations to the case of Metzler matrices, then the construction of the set $Y$ from Theorem~\ref{theorem:generic_metzler} can be simplified, but it still requires an exponential number of hyperplanes.

\begin{example}
Let us illustrate these results on the spectrahedron defined in Section~\ref{section:illustration}. The corresponding tropical matrices $Q^{(1)}, Q^{(2)}, Q^{(3)} \in \trop^{3 \times 3}$ are of Metzler type, and the associated tropical Metzler spectrahedron $\spectra$ is defined by the constraints:
\begin{align*}
\max\bigl(-1 + x_1, -5/4+x_3\bigr) & \geq x_2 \\
\max\bigl(x_1 + x_3, -1/4 + 2 x_3\bigr) & \geq 2 x_1  \\
x_2 & \geq -7/4 + x_3 \\
\max\bigl(5/4 + x_1 + x_2, 1 + x_2 + x_3\bigr) & \geq 2 x_3 
\end{align*}
The first inequality comes from~\eqref{eq:first_kind} with $i = 2$, and the last three constraints from~\eqref{eq:second_kind}.\footnote{The constraints of the form~\eqref{eq:first_kind} with $i = 1, 3$ are trivial.} The intersection of $\spectra$ with the hyperplane $x_3 = 0$ is depicted in Figure~\ref{fig:tropical_spectrahedron}. The matrices $Q^{(k)}$ fulfill the genericity conditions mentioned in Theorem~\ref{theorem:generic_metzler} so that the tropicalization of the spectrahedron of Section~\ref{section:illustration} is precisely described by the four inequalities given above.
\end{example}

\begin{figure}
\begin{center}
\begin{tikzpicture}[scale=6,convex/.style={draw=lightgray,fill=lightgray,fill opacity=0.7},convexborder/.style={very thick}]
\draw[gray!30,very thin,step=0.125] (-0.5,-1.25) grid (0.25,-1);
\foreach \x in {-0.5,-0.25,0,0.25}
 \node[anchor=north,gray!80,font=\scriptsize] at (\x,-1.25) {\x};
\foreach \y in {-1.25,-1}
 \node[anchor=east,gray!80,font=\scriptsize] at (-0.5,\y) {\y};
\draw[gray!50,->] (-0.5,-1.25) -- (0.325,-1.25) node[color=gray!50,right] {$x_1$};
\draw[gray!50,->] (-0.5,-1.25) -- (-0.5,-0.925) node[color=gray!50,above] {$x_2$};
\filldraw[convex] (-1/8,-9/8) -- (0,-5/4) -- (0,-1) -- cycle;
\draw[convexborder] (-1/8,-9/8) -- (0,-5/4) -- (0,-1) -- cycle;
\end{tikzpicture}
\end{center}
\caption{The tropicalization of the spectrahedron of Section~\ref{section:illustration}.}\label{fig:tropical_spectrahedron}
\end{figure}

Even though the Metzler case may look special, we can reduce the problem of deciding whether the set $\spectra(Q^{(1)}, \dots, Q^{(n)})$ is trivial to the subproblem in which the matrices are Metzler. To show this, we prove that every set  $\spectra(Q^{(1)}, \dots, Q^{(n)})$ is a projection of a tropical Metzler spectrahedron. Furthermore, this Metzler spectrahedron can be constructed in poly-time.
\begin{proposition}\label{prop:generic_lift}
Let $Q^{(1)}, \dots, Q^{(n)} \in \strop^{m \times m}$ be symmetric tropical matrices. Then the set $\spectra(Q^{(1)}, \dots, Q^{(n)})$ is a projection of a tropical Metzler spectrahedron.
\end{proposition}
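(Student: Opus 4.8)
The plan is to express $\spectra(Q^{(1)},\dots,Q^{(n)})$ as the projection of a tropical Metzler spectrahedron by realizing each of its defining constraints separately and then concatenating the resulting matrices block‑diagonally. I would work throughout with the description of $\spectra$ in the classical notation of~\eqref{eq:first_kind}--\eqref{eq:second_kind}: writing $a_i \coloneqq Q_{ii}^+(x)$, $c_{ij} \coloneqq Q_{ij}^+(x)$, $d_{ij} \coloneqq Q_{ij}^-(x) \in \trop$, the constraint of the first kind at index $i$, namely $Q_{ii}^+(x) \ge Q_{ii}^-(x)$, is already of Metzler type and is realized by a $1 \times 1$ block with diagonal entry $Q_{ii}$. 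The only thing that needs work is the constraint of the second kind for a pair $i < j$, which by Definition~\ref{def:spectrahedron} reads $a_i + a_j \ge 2\max(c_{ij}, d_{ij})$ \ or \ $c_{ij} = d_{ij}$.

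The combinatorial heart of the argument is the elimination of this disjunction. I would establish the purely tropical equivalence: for all $a,b,c,d \in \trop$,
\[
\bigl(\, a+b \ge 2\max(c,d) \ \text{ or }\ c = d \,\bigr)
\iff
\exists z \in \trop \colon\ a+b \ge 2z, \ \ \max(d,z) \ge c, \ \ \max(c,z) \ge d \, .
\]
For the direct implication one takes $z = \max(c,d)$ when the minor inequality holds and $z = \zero$ when $c = d$. For the converse, a four‑way case split according to which of the two arguments realizes the maximum in $\max(d,z) \ge c$ and in $\max(c,z) \ge d$ leaves only the degenerate possibility $c = d$ and possibilities in which $z \ge \max(c,d)$, whence $a+b \ge 2z \ge 2\max(c,d)$; the cases where $c$ or $d$ equals $\zero$ need a separate but immediate check. (Equivalently, the left‑hand side is the conjunction of $\max(a+b,2d) \ge 2c$ and $\max(a+b,2c) \ge 2d$, and the slack variable $z$ linearizes the two leading maxima.)

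I would then realize the right‑hand side of this equivalence — with $a = a_i$, $b = a_j$, $c = c_{ij}$, $d = d_{ij}$ and $z$ a fresh variable $z_{ij}$ — as a tropical Metzler spectrahedron in the variables $(x, z_{ij})$. Using four new indices $p,q,r,s$, I would prescribe symmetric tropical matrices whose diagonal entries achieve $Q_{pp}^+ = a$, $Q_{qq}^+ = b$, $Q_{rr}^+ = d \tplus z_{ij}$, $Q_{rr}^- = c$, $Q_{ss}^+ = c \tplus z_{ij}$, $Q_{ss}^- = d$ and $Q_{pp}^- = Q_{qq}^- = \zero$ (every entry is read off from the coefficients of $Q_{ii}$, $Q_{jj}$, $Q_{ij}$; the positive off‑diagonal coefficients of $Q_{ij}$ are kept but recorded with a negative tropical sign, which is harmless since only their modulus enters the constraints of $\spectra$), the unique nonzero off‑diagonal entry being $Q_{pq} = \tminus z_{ij}$ and all other off‑diagonal entries equal to $\zero$. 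By Lemma~\ref{lemma:metzler_spectrahedra}, the nontrivial constraints of this $4 \times 4$ tropical Metzler spectrahedron are exactly $a_i + a_j \ge 2 z_{ij}$ (the constraint of the second kind for $\{p,q\}$) together with $\max(d_{ij}, z_{ij}) \ge c_{ij}$ and $\max(c_{ij}, z_{ij}) \ge d_{ij}$ (those of the first kind for $r$ and $s$); all the others hold identically because the off‑diagonal entries involved vanish. Hence its projection onto the $x$‑coordinates is the solution set of the $(i,j)$‑constraint of $\spectra$.

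Finally I would assemble everything: take the block‑diagonal tropical Metzler matrices consisting of the $m$ size‑$1$ blocks (one per constraint of the first kind) and the $\binom{m}{2}$ size‑$4$ blocks above (one per pair, each carrying its own auxiliary variable $z_{ij}$). By Lemma~\ref{lemma:metzler_spectrahedra} the constraints of the associated tropical Metzler spectrahedron decouple over the blocks — the cross‑block constraints of the second kind being trivial since the cross‑block off‑diagonal entries are $\zero$ — so this spectrahedron is the intersection of the per‑block sets, and projecting away all the $z_{ij}$ (which are independent across blocks) yields $\spectra(Q^{(1)},\dots,Q^{(n)})$ by the per‑index and per‑pair equivalences. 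The construction uses $m + 4\binom{m}{2}$ indices and $n + \binom{m}{2}$ variables, with every entry computed in constant time from the input, so it runs in polynomial time. I expect the main obstacle to be the tropical equivalence of the second paragraph — namely the observation that the ``or $c = d$'' cancellation clause can be absorbed into one‑sided tropical inequalities at the cost of a single existentially quantified variable; once that is in place, the translation into Metzler matrices and the block‑diagonal bookkeeping are routine.
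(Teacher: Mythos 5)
Your proposal is correct and follows essentially the same route as the paper: the key equivalence you isolate (one slack variable $z_{ij}$ per pair with the constraints $a+b \ge 2z$, $\max(d,z)\ge c$, $\max(c,z)\ge d$) is exactly the paper's system $Q_{ii}^{+}(x)\tdot Q_{jj}^{+}(x)\ge y_{ij}^{\tdot 2}$, $y_{ij}\tplus Q_{ij}^{+}(x)\ge Q_{ij}^{-}(x)$, $y_{ij}\tplus Q_{ij}^{-}(x)\ge Q_{ij}^{+}(x)$, verified by the same case split. The only difference is that you make explicit the block-diagonal Metzler matrices realizing these constraints, which the paper merely asserts exist (with size $m^2\times m^2$).
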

\begin{proof}
Let us denote $\spectra(Q^{(1)}, \dots, Q^{(n)})$ by $\spectra$. For every pair $(i,j) \in [m] \times [m]$, $i < j$ we introduce a variable $y_{ij}$ and we consider the set $\tilde{\spectra}$ defined as the set of the points $(x,y) \in \trop^{n} \times \trop^{m(m-1)/2}$ that fulfill the following conditions:
\begin{compactitem}
\item for all $i \in [m]$, $Q_{ii}^{+}(x) \ge Q_{ii}^{-}(x)$;
\item for all $i,j \in [m]$, $i <j$, $y_{ij} \tplus Q_{ij}^{+}(x) \ge Q_{ij}^{-}(x)$;
\item for all $i,j \in [m]$, $i <j$, $y_{ij} \tplus Q_{ij}^{-}(x) \ge Q_{ij}^{+}(x)$;
\item for all $i,j \in [m]$, $i <j$, $Q_{ii}^{+}(x) \tdot Q_{jj}^{+}(x) \ge y_{ij}^{\tdot 2}$.
\end{compactitem}
The set $\tilde{\spectra}$ is a tropical Metzler spectrahedron defined by matrices of size $m^{2} \times m^{2}$. We claim that $\spectra$ is a projection of $\tilde{\spectra}$. First, let us take a point $x \in \spectra$. For every $(i,j)$ with $i < j$ such that $Q_{ij}^{-}(x) = Q_{ij}^{+}(x)$ we put $y_{ij} = \zero$. For every $(i,j)$ with $i < j$ such that $Q_{ii}^{+}(x) \tdot Q_{jj}^{+}(x) \ge (Q_{ij}^{+}(x) \tplus Q_{ij}^{-}(x))^{\tdot 2}$ we put $y_{ij} = Q_{ij}^{+}(x) \tplus Q_{ij}^{-}(x)$. It is clear that we have $(x,y) \in \tilde{\spectra}$. Conversely, let $(x,y) \in \tilde{\spectra}$. For every $(i,j)$ with $i < j$ we consider two cases. If $y_{ij} \ge Q_{ij}^{-}(x)$, then we have $y_{ij} \ge Q_{ij}^{+}(x)$ and hence $Q_{ii}^{+}(x) \tdot Q_{jj}^{+}(x) \ge y_{ij} \ge (Q_{ij}^{+}(x) \tplus Q_{ij}^{-}(x))^{\tdot 2}$. If $y_{ij} < Q_{ij}^{-}(x)$, then we have $Q_{ij}^{+}(x) \ge Q_{ij}^{-}(x)$ and $Q_{ij}^{-}(x) \ge Q_{ij}^{+}(x)$. Hence $Q_{ij}^{+}(x) = Q_{ij}^{-}(x)$. Therefore $x \in \spectra$.
\end{proof}

In the light of Proposition~\ref{prop:generic_lift}, we restrict in the rest of the paper to the problem of deciding whether a tropical Metzler spectrahedron is trivial. 

\section{Tropical spectrahedra and stochastic games}\label{sec:spectra_and_games}

\subsection{Stochastic mean payoff games}\label{section:stochastic_games}

In this section, we present the class of games which is related to nonarchimedean semidefinite feasibility problems. For simplicity, we refer to them as stochastic mean payoff games, although as we shall see, this terminology usually corresponds to a larger class of games. This abuse of language is justified by the fact that the associated decision and computational problems are poly-time equivalent, as discussed below. 

In our setting, a \emph{stochastic mean payoff game} involves two players, Max and Min, who control disjoint sets of states. 
The states owned by Max and Min are respectively indexed by elements of $[m]$ and $[n]$. We will use the symbols $i, j$ to refer to states of Player Max, and $k, l$ to states of Player Min. Both players alternatively move a pawn over these states as follows. When the pawn is on a state $k \in [n]$, Player Min chooses an action $a \in A^{(k)}$, which is defined as a subset of states of Max of cardinality $1$ or $2$: if $a = \{i\}$, then the pawn is moved to the state $i$, while if $a = \{i, j\}$ with $i \neq j$, it is moved to the state $i$ (resp.\ $j$) with probability $1/2$. In both cases, Player Max receives from Player Min a reward denoted by $r^a_k$. Once the pawn is on a state $i \in [m]$, Player Max picks an action $b \in B^{(i)}$, where $b$ is a subset of states of Min of cardinality $1$. Then, Player Max moves the pawn to the state $l$ such that $b = \{l\}$, and Player Min pays him a payment denoted by $r^b_i$. 

We suppose that Player Min starts the game and that players can always make the next move, i.e., that $A^{(k)} \neq \emptyset$ and $B^{(i)} \neq \emptyset$ for all $i \in [m]$ and $k \in [n]$.  

A \emph{policy for Player Min} is a function mapping every state $k \in [n]$ to an action $\sigma(k)$ in $A^{(k)}$. Analogously, a \emph{policy for Player Max} is a function $\tau$ such that $\tau(i) \in B^{(i)}$ for all $i \in [m]$. Suppose that the game starts from a state $k^*$ of Player Min. When players play according to a couple $(\sigma, \tau)$ of policies, the movement of the pawn is described by a Markov chain on the space $\chainspace = [m] \dunion [n]$. The average payoff of Player Max in the long-term is then defined as the average payoff of the controller in this Markov chain. In other words, the payoff of Player Max is given by
\begin{equation}\label{eq:average_payoff}
g_{k^*}(\sigma, \tau) = \lim_{N \to \infty} \E_{\sigma, \tau}\Bigl(\frac{1}{2N} \sum_{p = 1}^N \bigl(r^{\sigma(k_p)}_{k_p} + r^{\tau(i_p)}_{i_p}\bigr)\Bigr) \ , 
\end{equation}
where the expectation $\E_{\sigma, \tau}$ is taken over all the trajectories $k_1, i_1, k_2, \dots, i_p$ starting from $k_1 = k^*$ in the Markov chain. The goal of Player Max is to find a policy which maximizes his average payoff, while Player Min aims at minimizing this quantity. 
The basic theorem of stochastic mean payoff games is the existence of ``optimal'' policies for both players. This was proven by Liggett and Lippman~\cite{liggett_lippman}.
\begin{theorem}\label{th:optimal_policies}
There exists a pair of policies $(\overbar{\sigma}, \overbar{\tau})$ and a unique vector $\gameval \in \R^n$ such that for all initial states $k \in [n]$, the following two conditions are satisfied:
\begin{compactitem}
\item for each policy $\sigma$ of Player Min, $\gameval_k \leq g_k(\sigma, \overbar{\tau})$;
\item for each policy $\tau$ of Player Max, $\gameval_k \geq g_k(\overbar{\sigma}, \tau)$.
\end{compactitem}
\end{theorem}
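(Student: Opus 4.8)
The plan is to follow the classical vanishing-discount route (essentially the argument of Gillette and of Liggett and Lippman cited above): approximate the mean payoff game by \emph{discounted} games, which are easy to solve by a contraction argument, and then let the discount factor tend to $1$ along a suitable sequence.

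For each $\beta \in (0,1)$ I would first solve the $\beta$-discounted game. Its Shapley (dynamic programming) operator $T_\beta$ acts on $\R^{\chainspace}$, with $\chainspace = [m] \dunion [n]$: on a state $k \in [n]$ it is $(T_\beta v)_k = \min_{a \in A^{(k)}} \bigl( r^a_k + \beta \sum_{i \in a} \tfrac{1}{|a|} v_i \bigr)$, and on a state $i \in [m]$ it is $(T_\beta v)_i = \max_{b = \{l\} \in B^{(i)}} ( r^b_i + \beta v_l )$. The operator $T_\beta$ is order-preserving and a sup-norm contraction of rate $\beta$, so it has a unique fixed point $v^\beta \in \R^{\chainspace}$; and, by the classical theory of discounted perfect-information games, any choice of actions attaining the extrema in $v^\beta = T_\beta v^\beta$ yields a pair of stationary policies $(\sigma_\beta, \tau_\beta)$ that are simultaneously optimal in the $\beta$-discounted game. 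Writing $v^\beta(\sigma,\tau) = (I - \beta P_{\sigma\tau})^{-1} r_{\sigma\tau} \in \R^{\chainspace}$ for the discounted value of the Markov chain obtained by freezing both policies — here $P_{\sigma\tau}$ and $r_{\sigma\tau}$ are its one-move transition matrix and reward vector — optimality of $(\sigma_\beta,\tau_\beta)$ means $v^\beta = v^\beta(\sigma_\beta,\tau_\beta)$ together with the saddle inequalities
\[
v^\beta_s(\sigma,\tau_\beta) \geq v^\beta_s \geq v^\beta_s(\sigma_\beta,\tau) \qquad (s \in \chainspace)
\]
for every pair of stationary policies $(\sigma,\tau)$.

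Next comes the vanishing-discount limit. Since the policy set is finite, there is a pair $(\overbar{\sigma},\overbar{\tau})$ that is optimal in the $\beta$-discounted game for some sequence $\beta_p \uparrow 1$ (alternatively, $v^\beta$ is a semialgebraic function of $\beta$ and a single pair is optimal on an interval $(\beta_0,1)$, but the sequence is all we need). For any fixed stationary pair $(\sigma,\tau)$ the frozen chain lives on the finite set $\chainspace$, so $(1-\beta)(I - \beta P_{\sigma\tau})^{-1} \to (P_{\sigma\tau})^*$ as $\beta \uparrow 1$, where $(P_{\sigma\tau})^*$ is the ergodic (Cesàro) projector; hence $\lim_{\beta \to 1}(1-\beta)\,v^\beta_s(\sigma,\tau)$ exists and equals the average reward of the frozen chain started at $s$, which for $s = k^* \in [n]$ is exactly $g_{k^*}(\sigma,\tau)$ of \eqref{eq:average_payoff}. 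Set $\gameval_k := g_k(\overbar{\sigma},\overbar{\tau})$ for $k \in [n]$; then $\gameval_k = \lim_p (1-\beta_p)\, v^{\beta_p}_k$. Multiplying the saddle inequalities above, evaluated along $\beta = \beta_p$ with $(\sigma_{\beta_p},\tau_{\beta_p}) = (\overbar{\sigma},\overbar{\tau})$, by $(1-\beta_p) > 0$ and letting $p \to \infty$, I obtain $g_k(\sigma,\overbar{\tau}) \ge \gameval_k \ge g_k(\overbar{\sigma},\tau)$ for all $k \in [n]$ and all policies $\sigma$ of Min and $\tau$ of Max, which is the assertion. Uniqueness of $\gameval$ is then the usual sandwich: if $(\overbar{\sigma}',\overbar{\tau}')$ and $\gameval'$ also satisfy the conclusion, then for each $k$ one has $\gameval_k \le g_k(\overbar{\sigma}',\overbar{\tau}) \le \gameval'_k$, and symmetrically $\gameval'_k \le \gameval_k$.

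The main obstacle is the single analytic ingredient: knowing that $\lim_{\beta\to1}(1-\beta)v^\beta(\sigma,\tau)$ exists for \emph{every} fixed stationary pair and equals the Cesàro average reward. This is exactly where it matters that, once both policies are frozen, the object is merely a finite-state Markov chain — so the Cesàro limit exists unconditionally (no Tauberian side hypothesis is needed) and the Laurent expansion $v^\beta(\sigma,\tau) = \tfrac{1}{1-\beta}(P_{\sigma\tau})^* r_{\sigma\tau} + O(1)$ makes the limit transparent and lets it interact cleanly with the finite pigeonhole choice of $(\overbar{\sigma},\overbar{\tau})$. Everything else — the Banach fixed point for $v^\beta$, existence of discounted-optimal stationary policies, the pigeonhole over the finite policy set, and the uniqueness sandwich — is routine.
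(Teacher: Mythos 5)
Your argument is correct, but it is not the route the paper takes. The paper does not delegate Theorem~\ref{th:optimal_policies} entirely to the literature: it re-proves it through the Shapley operator $\shapley$ of~\eqref{e-def-shapley}. There, one shows $\shapley_{|\R^n}$ is piecewise affine and nonexpansive (Lemma~\ref{lemma:basic_shapley_properties}), invokes Kohlberg's invariant half-line theorem (Theorem~\ref{theorem:kohlberg}) to get $(\bias,\eigenval)$ with $\shapley(\bias+\gamma\eigenval)=\bias+(\gamma+1)\eigenval$, extracts via a selection lemma (Lemma~\ref{lemma:same_invariant_line}) policies $\overbar{\sigma},\overbar{\tau}$ whose one-player operators share that half-line, and then identifies $N$-stage payoffs with iterates of the frozen operator (Lemma~\ref{lemma:finite_game}) to conclude optimality and $\gameval=\eigenval/2$ (Theorem~\ref{theorem:optimal_policies_proof}). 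You instead run the classical vanishing-discount argument of Liggett--Lippman: Banach fixed point for the $\beta$-discounted operator, greedy stationary policies (which are indeed saddle points since the game is turn-based), a pigeonhole over the finite policy set to fix one pair along $\beta_p\uparrow 1$, and the Abelian limit $(1-\beta)(I-\beta P_{\sigma\tau})^{-1}\to(P_{\sigma\tau})^{*}$ for each frozen finite chain to pass the saddle inequalities to the mean-payoff limit; your normalization check (per-move discounting versus the $\tfrac{1}{2N}$ in~\eqref{eq:average_payoff}) and the uniqueness sandwich are both right. The trade-off: your proof is more elementary and self-contained (only Banach's theorem and the Cesàro/Abel theory of finite Markov chains, cf.\ Theorem~\ref{theorem:characterization_of_payoff}, with no Tauberian hypothesis needed once both policies are frozen), whereas the paper's proof produces the invariant half-line as a by-product, and that object is reused heavily afterwards (Collatz--Wielandt, dominions, value iteration), which is why the paper prefers it. Either way the theorem stands.
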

The vector $\gameval$ in Theorem~\ref{th:optimal_policies} is referred to as the \emph{value} of the game. Note that for every $k$, the quantity $\gameval_k$  coincides with average payoff $g_k(\overbar{\sigma}, \overbar{\tau})$ associated with the couple of policies $(\overbar{\sigma}, \overbar{\tau})$.
The first condition in Theorem~\ref{th:optimal_policies} states that, by playing according to the policy $\overbar{\tau}$, Player Max is certain to get an average payoff greater than or equal to the value $\gameval_k$ associated with the initial state. Symmetrically, Player Min is ensured to limit her average loss to the quantity $\gameval_k$ by following the policy $\overbar{\sigma}$.

A state $k \in [n]$ is said to be \emph{winning (for Player Max)} if the value $\gameval_k$ of the game starting from the initial state $k$ is nonnegative. We denote by \Smpg{} the following decision problem: ``given a stochastic mean payoff game, does there exist an initial state which is winning for Player Max?'' It can be shown that if one of the players has no choice (i.e., has only one possible policy), then the problem of finding values and optimal policies can be solved in polynomial time by linear programming (see, e.g., \cite[Section~ 2.9]{filar_vrieze}). This readily implies that \Smpg{} is in $\NP \cap \coNP$ (one guesses an optimal policy for one player, fixes it and then finds an optimal policy and the value of a $1$-player game). However, as discussed in the introduction, the question whether there exists a poly-time algorithm to solve this problem is open.
\begin{remark}\label{remark:general_games}
In the literature, stochastic mean payoff games correspond to a larger class of problems~\cite{andersson_miltersen}. 
These more general games admit a value and optimal policies as well. It turns out that the associated decision problem is poly-time equivalent to the simpler problem \Smpg{} defined above. Moreover, these decision problems are poly-time equivalent to the problem of computing the value and a pair of optimal policies. We refer to Section~\ref{sec:game_equivalence} for details.
\end{remark}

\begin{example}\label{ex:payoff_computation}
Let us revisit the example presented in Section~\ref{section:illustration}, see Figure~\ref{fig:example_of_a_game}. As noted previously, both players in this example game have only two policies: at state \maxstate{2} Player Max can choose the action that goes to $\minstate{1}$ or the action that goes to $\minstate{3}$, whereas at state \minstate{3} Player Min can choose the action that goes to $\bigl\{ \maxstate{1}, \maxstate{3} \bigr\}$ or the action that goes to $\bigl\{ \maxstate{2}, \maxstate{3} \bigr\}$.
Suppose that Player Max chooses the action that goes to $\minstate{1}$ and that Player Min chooses the action that goes to $\bigl\{ \maxstate{1}, \maxstate{3} \bigr\}$. 
The Markov chain obtained in this way has the transition matrix of form
\[
P =
\begin{bmatrix}
0 & U\\
V & 0
\end{bmatrix} \, ,
\]
where $U$ describes the probabilities of transition from circle states to square states, and $V$ describes the probabilities of transition from square states to circle states, i.e.,
\[
U =
\kbordermatrix{
&    \maxstatesmall{1} & \maxstatesmall{2} & \maxstatesmall{3}\\
\minstatesmall{1} & 1/2 & 1/2 & 0\\
\minstatesmall{2} & 0 & 1 & 0\\
\minstatesmall{3} & 1/2 & 0 & 1/2
} \, , \quad 
V =
\kbordermatrix{
&    \minstatesmall{1} & \minstatesmall{2} & \minstatesmall{3}\\
\maxstatesmall{1} & 0 & 0 & 1\\
\maxstatesmall{2} & 1 & 0 & 0\\
\maxstatesmall{3} & 0 & 1 & 0
} \, .
\]

This chain has only one recurrent class and all states belong to this class. Moreover, it is easy to verify that
\[
\pi = \frac{1}{10}(2, 1, 2, 2, 2, 1)
\]
is the stationary distribution of this chain. Therefore, by Theorem~\ref{theorem:characterization_of_payoff}, the payoff of Player Max is equal to
\[
g\Bigl( \bigl\{ \maxstate{1}, \maxstate{3} \bigr\}, \{ \minstate{1} \} \Bigr) = \frac{1}{10}(-\frac{6}{4} + 2 - 2 + \frac{9}{4}) = \frac{3}{40}. 
\]

Similarly, if Player Max chooses the action that goes to $\minstate{1}$ and Player Min chooses the action that goes to $\bigl\{ \maxstate{2}, \maxstate{3} \bigr\}$, then the transition matrix of the generated Markov chain has the form $
P' =
\begin{bsmallmatrix}
0 & U'\\
V & 0
\end{bsmallmatrix}$, 
where
\[
U' =
\kbordermatrix{
&    \maxstatesmall{1} & \maxstatesmall{2} & \maxstatesmall{3}\\
\minstatesmall{1} & 1/2 & 1/2 & 0\\
\minstatesmall{2} & 0 & 1 & 0\\
\minstatesmall{3} & 0 & 1/2 & 1/2
}
\]
and $V$ is the same as previously. In this case the chain also has only one recurrent class and every state belongs to this class. Furthermore,
\[
\pi' =  \frac{1}{14}(4, 1, 2, 2, 4, 1)
\]
is the stationary distribution of this chain. Hence the payoff of Player Max is equal to
\[
g\Bigl( \bigl\{ \maxstate{2}, \maxstate{3} \bigr\}, \{ \minstate{1}\} \Bigr)  = \frac{1}{14}(2 - 4 + \frac{9}{4}) = \frac{1}{56}.
\]In both cases, the payoff of Player Max is positive. Therefore, if Player Max chooses the action that goes to $\minstate{1}$, then he is guaranteed to obtain a nonnegative payoff. One can check, by doing the same calculations for the remaining policies, that $\Bigl( \bigl\{ \maxstate{2}, \maxstate{3} \bigr\}, \{ \minstate{1}\} \Bigr)$ is the unique couple of optimal policies in this game.
\end{example}

\subsection{Shapley operators}

One of the possible approaches to analyze stochastic mean payoff games is to introduce the associated \emph{Shapley operator}, which is a map $\shapley$ from $\trop^{n}$ to itself defined as:
\begin{align}\label{e-def-shapley}
(\shapley(x))_k =
\min_{
\substack{a \in A^{(k)}\\
a = \{i, j\}}}
\Bigl(r^a_k + \frac{1}{2}\bigl(\max_{
\substack{
b \in B^{(i)}\\
b = \{l\}}} (r^b_i + x_l) 
+ \max_{
\substack{
b \in B^{(j)}\\
b = \{l\}}} (r^b_j + x_l)\bigr)\Bigr) \, ,
\end{align}
where we use the convention that $i = j$ when $a \in A^{(i)}$ is the singleton $\{i\}$ and we set $\frac{1}{2}(-\infty) = -\infty$.

Let us point out that $\trop$ can be endowed with a natural metric $d(x,y) = \abs{\exp(x) - \exp(y)}$. This metric generates the product topology on $\trop^{n}$ in which the functions
\begin{equation*}
\begin{aligned}
&\trop^{2} \ni (a,b)  \to a + b \in \trop, \
\trop^{2} \ni (a,b)  \to \max\{a, b\} \in \trop, \\
&\trop^{2} \ni (a,b)  \to \min\{a, b\} \in \trop, \
\trop \ni a  \to \frac{1}{2}a \in \trop
\end{aligned}
\end{equation*}
are continuous. If $x, y \in \trop^{n}$ are vectors, then we denote $x \le y$ if the inequality $x_{k} \le y_{k}$ is fulfilled for every $k \in [n]$. We say that a map $f \colon \R^{n} \to \R^{d}$ is \emph{piecewise affine} if $\R^{n}$ can be partitioned into finitely many polyhedra such that $f$ restricted to any of these polyhedra is affine. The basic properties of Shapley operators are summarized in the next lemma. Hereafter, if $\lambda \in \R$ and $x \in \trop^{n}$, then we use the notation $\lambda + x$ to denote the vector $(\lambda + x_{1}, \dots, \lambda + x_{n})$.

\begin{lemma}\label{lemma:basic_shapley_properties} The Shapley operator $\shapley$ has the following properties:
\begin{enumerate}[(i)]
\item it is \emph{order preserving}, i.e., for all $x, y \in \trop^{n}$ such that $x \le y$ we have $\shapley(x) \le \shapley(y)$;
\item it is \emph{additively homogeneous}, i.e., if and $\lambda \in \R$, then $\shapley(\lambda + x) = \lambda + \shapley(x)$ for all $x \in \trop^{n}$;
\item it is continuous in the topology of $\trop^{n}$;
\item\label{point:nonexpansive} $\shapley_{|\R^{n}}$ is \emph{nonexpansive} in the supremum norm, i.e., for all $x, y \in \R^{n}$ we have $\|\shapley(x) -  \shapley(y)\| \le \| x - y\|$, where $\|x\| = \max_{k \in [n]}|x_{k}|$;
\item $\shapley_{|\R^{n}}$ is piecewise affine.
\end{enumerate}
\end{lemma}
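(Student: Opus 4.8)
The plan is to verify each of the five properties directly from the defining formula~\eqref{e-def-shapley}, exploiting the fact that $\shapley$ is built by composing the elementary operations $(a,b)\mapsto a+b$, $(a,b)\mapsto\max(a,b)$, $(a,b)\mapsto\min(a,b)$ and $a\mapsto\frac12 a$ listed in the excerpt, together with the constant translations by the rewards $r^a_k$ and $r^b_i$. All of these building blocks are order preserving, continuous on $\trop$, and (on the real part) nonexpansive in absolute value, so the idea is to push these closure properties through the composition.

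For \emph{(i)}, I would observe that if $x\le y$ then $r^b_i+x_l\le r^b_i+y_l$ for every admissible $b=\{l\}$, hence the inner maxima over $B^{(i)}$ and $B^{(j)}$ increase (or stay $-\infty$), hence so does the half-sum plus $r^a_k$ for each fixed action $a\in A^{(k)}$, and finally the outer minimum over $a\in A^{(k)}$ is monotone; this gives $\shapley(x)\le\shapley(y)$ coordinatewise. For \emph{(ii)}, additive homogeneity, the key remark is that translating every coordinate by $\lambda\in\R$ adds exactly $\lambda$ inside each bracket $\max_b(r^b_i+x_l)$ (here one uses $\lambda\in\R$, so no $-\infty$ arithmetic is needed and a finite constant commutes with $\max$), the factor $\frac12$ applied to the sum of two such brackets yields $\frac12(2\lambda)=\lambda$, the outer $r^a_k+{}$ and $\min_a$ preserve this shift, so $(\shapley(\lambda+x))_k=\lambda+(\shapley(x))_k$. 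Property \emph{(iii)} follows because $\shapley$ is a finite composition of the continuous maps displayed before the lemma (finite $\max$ and $\min$ of continuous functions are continuous, and the topology on $\trop^n$ is the product topology), so each coordinate $(\shapley(x))_k$ is continuous; alternatively \emph{(iii)} is a standard consequence of \emph{(i)} and \emph{(ii)} via the fact that order-preserving additively homogeneous self-maps of $\trop^n$ are automatically continuous, but I would prefer the direct composition argument to keep things self-contained.

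For \emph{(iv)}, restricting to $x,y\in\R^n$, I would use the elementary inequality that for finite real families, $\bigl|\max_l a_l-\max_l b_l\bigr|\le\max_l|a_l-b_l|$ and the same for $\min$; applying this with $a_l=r^b_i+x_l$, $b_l=r^b_i+y_l$ gives that the bracket $\max_{b\in B^{(i)}}(r^b_i+\cdot_l)$ is nonexpansive in $\|\cdot\|$, the average of two nonexpansive maps is nonexpansive, adding the constant $r^a_k$ does not change this, and the outer $\min_{a\in A^{(k)}}$ is again nonexpansive; hence $|(\shapley(x))_k-(\shapley(y))_k|\le\|x-y\|$ for every $k$, which is the claim. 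Property \emph{(v)}: each bracket $\max_{b\in B^{(i)}}(r^b_i+x_l)$ is a finite max of affine functions of $x$, hence piecewise affine and convex; the average of two such is piecewise affine; adding $r^a_k$ keeps it piecewise affine; and a finite $\min$ of piecewise affine functions is piecewise affine, since one may take the common refinement of the underlying polyhedral subdivisions. Thus $\shapley_{|\R^n}$ is piecewise affine. I do not expect a genuine obstacle here; the only points requiring mild care are the handling of the $-\infty$ entries and the $\frac12(-\infty)=-\infty$ convention in parts \emph{(i)}--\emph{(iii)} (so that, e.g., monotonicity and continuity hold also on the boundary $\trop^n\setminus\R^n$), and making sure that in \emph{(ii)} one genuinely uses $\lambda\in\R$ rather than $\lambda\in\trop$, since translating by $-\infty$ would collapse coordinates and break homogeneity.
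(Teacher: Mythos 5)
Your proposal is correct, and for parts (i), (ii), (iii) and (v) it simply fills in the details that the paper dismisses as immediate from the definition of $\shapley$ (monotonicity and translation-equivariance of the building blocks, continuity of finite $\max$/$\min$ and of $a\mapsto\frac12 a$ in the topology of $\trop$, and closure of piecewise affine functions under finite $\max$, $\min$, averaging and translation). Where you genuinely diverge is part (iv): you prove nonexpansiveness directly, propagating the elementary inequality $\abs{\max_l a_l-\max_l b_l}\le\max_l\abs{a_l-b_l}$ (and its analogue for $\min$) through the composition and using that a convex combination of nonexpansive maps is nonexpansive. The paper instead deduces (iv) from (i) and (ii) together with the fact that $\shapley$ preserves $\R^n$: from $x\le \|x-y\|+y$ one gets $\shapley(x)\le\|x-y\|+\shapley(y)$ and symmetrically, which is the standard Crandall--Tartar-type argument. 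Your route is more hands-on and self-contained; the paper's is shorter and exhibits the general principle that every order-preserving, additively homogeneous self-map of $\R^n$ is automatically sup-norm nonexpansive, which is conceptually the reason the lemma lists (i) and (ii) first. One very minor imprecision in your (v): on a cell of the common refinement the finitely many functions are each affine, but their pointwise $\min$ need not be affine there, so one must subdivide each such cell further along the loci where two of the affine pieces agree; this is routine and does not affect the conclusion.
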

\begin{proof}
The first two properties follow trivially from the definition of $\shapley$. The third follows from the remarks about topology of $\trop^{n}$ stated above. The fourth follows from the first two and the fact that $\shapley$ preserves $\R^{n}$. Indeed, for all $x, y \in\R^{n}$ we have $x \le \| x - y\| + y$. Therefore $\shapley(x) \le \shapley(\| x - y\| + y) = \| x - y\| + \shapley(y)$. Analogously, $\shapley(y) \le \| x - y\| + \shapley(x)$ and hence $\|\shapley(x) -  \shapley(y)\| \le \| x - y\|$. The last point is immediate from the definition of $\shapley$.
\end{proof}

A central question, given $f \colon \R^{n} \to \R^{n}$, is to decide whether the limit $\lim_{N \to \infty}f^{N}(x)/N$ exists. In order to prove the existence of this limit, 
Kohlberg 
introduced the notion of an invariant half-line and proved the next theorem~\cite{kohlberg}.

\begin{theorem}\label{theorem:kohlberg}
Suppose that function $f \colon \R^{n} \to \R^{n}$ is piecewise affine and nonexpansive in any norm. Then there exists a triple $(\bias, \eigenval, \gamma_{0}) \in \R^{n} \times \R^{n} \times \R$, $\gamma_{0} \ge 0$, such that $f(\bias + \gamma \eigenval) = \bias + (\gamma+1)\eigenval$ for all $\gamma \ge \gamma_{0}$. A pair $(\bias, \eigenval)$ in called an \emph{invariant half-line}. Furthermore, if $(\bias_{1}, \eigenval_{1})$ and $(\bias_{2}, \eigenval_{2})$ are invariant half-lines, then $\eigenval_{1} = \eigenval_{2}$.
\end{theorem}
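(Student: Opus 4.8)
The plan is to realize the invariant half-line as the first-order asymptotics, as $\beta \to 1^-$, of the fixed points of the contracted maps $\beta f$. First I would put the data in a convenient normal form: since $f$ is piecewise affine we may assume the pieces are finitely many full-dimensional closed polyhedra $P_1,\dots,P_N$ covering $\R^{n}$, with $f(x) = A_i x + b_i$ on $P_i$; nonexpansiveness of $f$ forces $\|A_i\| \le 1$ for the operator norm attached to the ambient vector norm. For each $\beta \in (0,1)$ the map $\beta f$ is a $\beta$-contraction, hence has a unique fixed point $x_\beta = \beta f(x_\beta)$; the assignment $\beta \mapsto x_\beta$ is continuous, the identity $x_\beta - \beta f(0) = \beta\bigl(f(x_\beta)-f(0)\bigr)$ yields the a priori bound $\|x_\beta\| \le \tfrac{\beta}{1-\beta}\|f(0)\|$, and the graph $\{(\beta,x_\beta): \beta \in (0,1)\}$ is semialgebraic, being a finite union of the sets $\{(\beta,x): x \in P_i,\ x = \beta(A_i x + b_i)\}$.

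Next I would analyze the behaviour near $\beta = 1$. By o-minimality each semialgebraic set $\{\beta \in (0,1) : x_\beta \in P_i\}$ is a finite union of points and intervals, and since the $P_i$ cover $\R^{n}$ there is a single piece $P$ (say $f = A\,\cdot\, + b$ on $P$, with $\|A\| \le 1$) and a $\beta^* < 1$ with $x_\beta \in P$ for all $\beta \in (\beta^*,1)$. On that interval the fixed-point equation is the linear system $(I - \beta A)x_\beta = \beta b$, and since $\|\beta A\| < 1$ this gives $x_\beta = \beta(I-\beta A)^{-1}b =: g(\beta)$, a rational vector function of $\beta$, analytic on $[0,1)$. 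The a priori bound $\|g(\beta)\| = O\bigl(1/(1-\beta)\bigr)$ forces the pole of $g$ at $\beta = 1$ to have order at most one, so I may write $g(\beta) = \tfrac{\eigenval}{1-\beta} + \bias + (1-\beta)\,r + O\bigl((1-\beta)^{2}\bigr)$ with $\eigenval,\bias,r \in \R^{n}$.

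Then, substituting this expansion into the identity $(I - \beta A)g(\beta) = \beta b$, rewritten as $\bigl((I-A) + (1-\beta)A\bigr)g(\beta) = \beta b$, and matching powers of $1-\beta$: the coefficient of $(1-\beta)^{-1}$ gives $A\eigenval = \eigenval$, and that of $(1-\beta)^{0}$ gives $A\bias + b = \bias + \eigenval$. Writing $P = \{x : \langle c_s,x\rangle \le d_s,\ s \in S\}$ and using that $g(\beta) = \bias + \tfrac{\eigenval}{1-\beta} + O(1-\beta)$ lies in $P$ for $\beta$ near $1$, I read off $\langle c_s,\eigenval\rangle \le 0$ for every $s$, with $\langle c_s,\bias\rangle \le d_s$ whenever $\langle c_s,\eigenval\rangle = 0$; hence $\bias + \gamma\eigenval \in P$ for all $\gamma \ge \gamma_0$, with $\gamma_0 \ge 0$ chosen large enough to absorb the finitely many indices having $\langle c_s,\eigenval\rangle < 0$. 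For such $\gamma$ one computes
\[
f(\bias + \gamma\eigenval) = A(\bias + \gamma\eigenval) + b = (A\bias + b) + \gamma A\eigenval = (\bias + \eigenval) + \gamma\eigenval = \bias + (\gamma+1)\eigenval \, ,
\]
which is the asserted half-line. For uniqueness of $\eigenval$, given two half-lines $(\bias_1,\eigenval_1)$ and $(\bias_2,\eigenval_2)$, applying nonexpansiveness at $\bias_1 + \gamma\eigenval_1$ and $\bias_2 + \gamma\eigenval_2$ gives $\|(\bias_1-\bias_2) + (\gamma+1)(\eigenval_1-\eigenval_2)\| \le \|(\bias_1-\bias_2) + \gamma(\eigenval_1-\eigenval_2)\|$ for all large $\gamma$; the map $\gamma \mapsto \|(\bias_1-\bias_2) + \gamma(\eigenval_1-\eigenval_2)\|$ is convex, hence bounded above on a half-line only if its linear part vanishes, i.e. $\eigenval_1 = \eigenval_2$.

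The hard part will be the step near $\beta = 1$: proving that $\beta \mapsto x_\beta$ is eventually a single rational function valued in one affine piece, and that its pole at $\beta = 1$ is simple. This is where both hypotheses are genuinely used — piecewise affineness (finitely many pieces, hence an eventual single piece, by o-minimality/pigeonhole together with analytic continuation of the rational selection) and nonexpansiveness (the growth bound $\|x_\beta\| = O(1/(1-\beta))$ that caps the order of the pole) — while everything afterwards is linear algebra and polyhedral bookkeeping. (An alternative is Kohlberg's original combinatorial argument, which builds the half-line directly from the orbit $f^{N}(0)$, whose increments are bounded by nonexpansiveness; the discounted-fixed-point route above seems more transparent.)
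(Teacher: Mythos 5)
The paper does not prove this statement: it is quoted verbatim from Kohlberg's 1980 article, with a citation in place of a proof, so there is no internal argument to compare yours against. Your proposal is a correct, self-contained proof by the ``vanishing discount'' route (it is essentially the argument later popularized by Neyman for nonexpansive operators, rather than Kohlberg's original one): all the steps check out. The a priori bound $\|x_\beta\|\le \tfrac{\beta}{1-\beta}\|f(0)\|$ is right; the reduction to finitely many full-dimensional closed pieces covering $\R^{n}$ is legitimate because the closures of the full-dimensional cells of the partition cover $\R^{n}$ and $f$, being nonexpansive hence continuous, agrees with the corresponding affine map on each closure, which also yields $\|A_i\|\le 1$; the set $\{\beta : x_\beta\in P_i\}$ is indeed a finite union of points and intervals (one can even avoid o-minimality: it is cut out by finitely many rational inequalities in $\beta$, since $x_\beta=\beta(I-\beta A_i)^{-1}b_i$ whenever it lies in $P_i$ and $I-\beta A_i$ is invertible for $\beta<1$), so one piece $P$ captures $x_\beta$ for all $\beta$ near $1$; the growth bound caps the pole of the rational function $g$ at $\beta=1$ at order one; coefficient matching in $\bigl((I-A)+(1-\beta)A\bigr)g(\beta)=\beta b$ gives $A\eta=\eta$ and $Av+b=v+\eta$; the facet-by-facet analysis puts $v+\gamma\eta$ in $P$ for large $\gamma$; and the uniqueness argument via convexity (or simply the triangle-inequality lower bound $\|w+\gamma d\|\ge\gamma\|d\|-\|w\|$) is sound. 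What this approach buys over Kohlberg's original, more combinatorial construction is transparency: the half-line is exhibited explicitly as the first two terms of the Laurent expansion of the discounted fixed point, which also makes the link with Blackwell optimality and with the mean-payoff interpretation used elsewhere in the paper immediate.
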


In particular, we obtain $\lim_{N \to \infty} f^{N}(\bias)/N = \eigenval$. Moreover, for any $x \in \R^{n}$ and $N \ge 1$ we have $\| f^{N}(x) - f^{N}(\bias)\| \le \| x - \bias\|$. Thus, we get the following corollary.

\begin{corollary}\label{corollary:limit_of_shapley}
Suppose that $f \colon \R^{n} \to \R^{n}$ fulfills the assumptions of Theorem~\ref{theorem:kohlberg}. Then, for every $x \in \R^{n}$ we have
\[
\lim_{N \to \infty} \frac{1}{N}f^{N}(x) = \eigenval \ \text{ and } \ \lim_{N \to \infty} \max_{k \in [n]} \frac{1}{N}f^{N}_{k}(x) = \max_{k \in [n]} \eigenval_{k} \, .
\]
\end{corollary}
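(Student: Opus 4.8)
The plan is to reduce the statement to the elementary behaviour of $f$ along the invariant half-line produced by Theorem~\ref{theorem:kohlberg}, and then to transfer the conclusion to an arbitrary starting point using nonexpansiveness.

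First I would apply Theorem~\ref{theorem:kohlberg} to obtain a triple $(\bias, \eigenval, \gamma_0) \in \R^n \times \R^n \times \R$ with $\gamma_0 \ge 0$ and $f(\bias + \gamma \eigenval) = \bias + (\gamma + 1)\eigenval$ for all $\gamma \ge \gamma_0$. Setting $x_0 \coloneqq \bias + \gamma_0 \eigenval$ and iterating this identity, a one-line induction gives $f^N(x_0) = \bias + (\gamma_0 + N)\eigenval$ for every $N \ge 0$, whence $\frac1N f^N(x_0) = \eigenval + \frac1N(\bias + \gamma_0\eigenval) \to \eigenval$ as $N \to \infty$. Now let $x \in \R^n$ be arbitrary. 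Since $f$ maps $\R^n$ into itself and is nonexpansive in the supremum norm (this is part of the hypotheses of Theorem~\ref{theorem:kohlberg}; for Shapley operators it is recorded in point~(iv) of Lemma~\ref{lemma:basic_shapley_properties}), applying $f$ $N$ times yields $\|f^N(x) - f^N(x_0)\| \le \|x - x_0\|$, a bound independent of $N$. Hence
\[
\Bigl\|\tfrac1N f^N(x) - \eigenval\Bigr\| \le \tfrac1N\|x - x_0\| + \Bigl\|\tfrac1N f^N(x_0) - \eigenval\Bigr\| \, ,
\]
and both terms on the right tend to $0$ as $N \to \infty$; this proves the first limit, and in particular the coordinatewise convergence $\frac1N f^N_k(x) \to \eigenval_k$ for each $k$. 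For the second limit I would only use that $z \mapsto \max_{k\in[n]} z_k$ is $1$-Lipschitz for the supremum norm, so that $\bigl|\max_k \frac1N f^N_k(x) - \max_k \eigenval_k\bigr| \le \bigl\|\frac1N f^N(x) - \eigenval\bigr\| \to 0$.

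I do not expect any real obstacle here: the argument is essentially the observation that additive homogeneity together with nonexpansiveness make this Ces\`aro-type limit insensitive to the initial condition. The only mild point to keep track of is that Theorem~\ref{theorem:kohlberg} controls $f$ only on the tail $\{\bias + \gamma\eigenval : \gamma \ge \gamma_0\}$ of the half-line, which is why one compares $f^N(x)$ with $f^N(x_0)$ for $x_0 = \bias + \gamma_0\eigenval$ rather than with $f^N(\bias)$ directly (the case $\gamma_0 = 0$ being the one noted just before the statement, where one may take $x_0 = \bias$).
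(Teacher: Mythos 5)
Your argument is correct and coincides with the paper's: the authors likewise deduce $\lim_N f^N(\bias)/N = \eigenval$ from the invariant half-line and then transfer to arbitrary $x$ via the nonexpansiveness bound $\|f^N(x)-f^N(\bias)\|\le\|x-\bias\|$. Your extra care in starting from $x_0=\bias+\gamma_0\eigenval$ (rather than $\bias$ itself) is a harmless refinement of the same idea.
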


Lemma~\ref{lemma:basic_shapley_properties} and Theorem~\ref{theorem:kohlberg} show that every Shapley operator $\shapley$ has an invariant half-line $(\bias, \eigenval)$. The vectors $x$ such that $x \leq \shapley(x)$ may be thought of as nonlinear
analogues of subharmonic functions. 
Akian, Gaubert, and Guterman showed
that the existence of such a nontrivial vector
$x$
is equivalent to the
property that the mean payoff game has at least one winning state, 
in the case of deterministic games~\cite{polyhedra_equiv_mean_payoff}. This
is based in particular on a nonlinear fixed point theorem (Collatz--Wielandt theorem), building on earlier work of Nussabum~\cite[Theorem~3.1]{nussbaum}.

\begin{theorem}[{Collatz--Wielandt properties, \cite[Lemma~2.8]{polyhedra_equiv_mean_payoff}}]\label{theorem:collatz-wielandt}
Let $f \colon \trop^{n} \to \trop^{n}$ be an order-preserving, additively homogeneous, and continuous map. Let $x \in \R^{n}$ be any point. Then
\begin{align} 
\lim_{N \to \infty}\max_{k \in [n]} \frac{1}{N}f^{N}_{k}(x) &=\max \{ \pert \in \trop \colon \exists u \in \trop^{n}, u \neq \zero, f(u) \ge \pert + u\} \, ,\label{pty1:collatz-wielandt}\\
& = \inf \{ \pert \in \R\colon \exists u\in \R^n,\; f(u)\leq \pert +u\}
\label{pty2:collatz-wielandt}
\end{align}
where we use the notation $\trop^{n} \ni \zero = (\zero, \zero, \dots, \zero)$.
\end{theorem}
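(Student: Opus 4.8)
The plan is to prove the two equalities by establishing the chain
\[
P_{1}\ \le\ L\ \le\ P_{2}\ \le\ P_{1},
\]
where $L\coloneqq\lim_{N\to\infty}\max_{k\in[n]}\tfrac{1}{N}f^{N}_{k}(x)$ (whose existence is part of the statement), $P_{1}$ is the maximum in~\eqref{pty1:collatz-wielandt} and $P_{2}$ the infimum in~\eqref{pty2:collatz-wielandt}, and then separately showing that the supremum defining $P_{1}$ is attained. As a preliminary reduction I would note that the argument in the proof of Lemma~\ref{lemma:basic_shapley_properties} applies verbatim (it uses only order-preservation and additive homogeneity, assuming, as is the case for Shapley operators, that $f$ maps $\R^{n}$ into itself), so $f_{|\R^{n}}$ is nonexpansive for the supremum norm; hence $\abs{\max_{k}f^{N}_{k}(x)-\max_{k}f^{N}_{k}(0)}\le\| x\|$, and the limit $L$, once known to exist, is independent of $x\in\R^{n}$. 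Existence of $L$ then follows from Fekete's subadditive lemma: since $f^{M}(0)$ is coordinatewise dominated by the constant vector with all entries $\max_{k}f^{M}_{k}(0)$, order-preservation and additive homogeneity give $\max_{k}f^{N+M}_{k}(0)\le\max_{k}f^{N}_{k}(0)+\max_{k}f^{M}_{k}(0)$, so $L=\inf_{N}\tfrac{1}{N}\max_{k}f^{N}_{k}(0)$ exists and is finite.

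The two easy inequalities come from iterating the defining relations. For $P_{1}\le L$: if $u\neq\zero$ and $f(u)\ge\pert+u$ with $\pert\in\R$, then $f^{N}(u)\ge N\pert+u$ by induction; with $c\coloneqq\max_{k}u_{k}\in\R$ one has $u-c\le 0$, hence $f^{N}(0)\ge f^{N}(u-c)=f^{N}(u)-c\ge N\pert+u-c$, and taking the maximum over $k$ gives $\max_{k}f^{N}_{k}(0)\ge N\pert$, so $L\ge\pert$; the supremum over admissible $\pert$ gives $L\ge P_{1}$. For $L\le P_{2}$: if $u\in\R^{n}$ and $f(u)\le\pert+u$, then $f^{N}(u)\le N\pert+u$ and nonexpansiveness gives $\max_{k}f^{N}_{k}(0)\le\max_{k}f^{N}_{k}(u)+\| u\|\le N\pert+\max_{k}u_{k}+\| u\|$, so $L\le\pert$, whence $L\le P_{2}$.

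For the attainment I would take $\pert_{p}\uparrow P_{1}$ with witnesses $u_{p}\neq\zero$ and, using additive homogeneity, normalize so that $\max_{k}(u_{p})_{k}=0$, which preserves $f(u_{p})\ge\pert_{p}+u_{p}$. The set $\{v\in\trop^{n}\colon\max_{k}v_{k}=0\}$ is compact, being homeomorphic via the coordinatewise exponential to the compact set of $w\in[0,1]^{n}$ having a coordinate equal to $1$; extracting a convergent subsequence $u_{p_{j}}\to u^{\ast}$ and using continuity of $f$, of $(\pert,v)\mapsto\pert+v$ and of $v\mapsto\max_{k}v_{k}$ yields $\max_{k}u^{\ast}_{k}=0$ (so $u^{\ast}\neq\zero$) and $f(u^{\ast})\ge P_{1}+u^{\ast}$. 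Thus the supremum is a maximum.

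The crux --- which I expect to be the main obstacle --- is the remaining inequality $P_{2}\le P_{1}$, that is, the existence of a nonzero $\bar u\in\trop^{n}$ with $f(\bar u)\ge P_{2}+\bar u$; this is a tropical nonlinear Perron--Frobenius statement. When $f$ is piecewise affine --- which covers the Shapley operators to which we apply the theorem (Lemma~\ref{lemma:basic_shapley_properties}) --- I would deduce it from Kohlberg's theorem (Theorem~\ref{theorem:kohlberg}): the invariant half-line supplies $\bias,\eigenval\in\R^{n}$ with $f(\bias+\gamma\eigenval)=\bias+(\gamma+1)\eigenval$ for $\gamma$ large, hence (iterating) $f^{N}(\bias+\gamma_{0}\eigenval)=N\eigenval+(\bias+\gamma_{0}\eigenval)$, and Corollary~\ref{corollary:limit_of_shapley} identifies $\max_{k}\eigenval_{k}$ with $L$. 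In particular $w\coloneqq\bias+\gamma_{0}\eigenval$ satisfies $f(w)=\eigenval+w\le L+w$, so $P_{2}\le L$ already. To produce a matching nonzero vector, I would restrict to the set $\dominion\coloneqq\{k\colon\eigenval_{k}=L\}$ of coordinates realizing the top cycle-time and show, using the min-max structure of $f$ (equivalently, the behaviour of optimal policies on the highest dominion), that the vector $\tilde u$ equal to $w$ on $\dominion$ and to $\zero$ elsewhere satisfies $f(\tilde u)\ge L+\tilde u$; together with $P_{1}\le L\le P_{2}$ this forces $P_{1}=L=P_{2}$, which finishes the proof. In the general continuous case one would replace Kohlberg's theorem by a Nussbaum-type nonlinear Perron--Frobenius argument --- a suitable perturbation of $f$ admits fixed points by a contraction or Brouwer-type argument, and a limit of these, taken in the compact normalized state space, provides the required vector --- the delicate point being to control the coordinates that drift to $\zero$ in the limit.
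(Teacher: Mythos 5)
The paper does not actually prove this theorem: it is quoted from \cite[Lemma~2.8]{polyhedra_equiv_mean_payoff}, with only the added remark that the supremum in~\eqref{pty1:collatz-wielandt} is attained. Judged on its own, your architecture ($P_1\le L\le P_2\le P_1$ plus attainment) is the standard one, and the parts you carry out in detail are correct: the Fekete/subadditivity argument for the existence of $L$, the two iteration inequalities, and the compactness argument for attainment of the supremum in~\eqref{pty1:collatz-wielandt}. You are also right to flag that one needs $f(\R^n)\subset\R^n$ for the limit to be finite; that hypothesis is present in the cited lemma and satisfied by the Shapley operators of this paper.

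The gap is in the crux $P_2\le P_1$. First, your argument covers only piecewise-affine $f$, whereas the theorem is stated for arbitrary continuous, order-preserving, additively homogeneous maps; ``replace Kohlberg by a Nussbaum-type argument'' is a description of the cited lemma, not a proof of it. Second, even in the piecewise-affine case the decisive step --- that $\tilde u$, equal to $w$ on $D=\{k:\eigenval_k=L\}$ and to $\zero$ elsewhere, satisfies $f(\tilde u)\ge L+\tilde u$ --- is asserted via ``the min-max structure of $f$'' and ``optimal policies on the highest dominion'', which are features of Shapley operators rather than of general piecewise-affine nonexpansive maps, and you do not say how they would be used. The claim is true, and the clean proof is a limiting argument you should supply: with $c\coloneqq\max_{k\in D}\bias_k$, set $w^{(\gamma)}\coloneqq\bias+\gamma\eigenval-\gamma L-c$ for $\gamma\ge\gamma_0$; then $f(w^{(\gamma)})=(L+w^{(\gamma)})+(\eigenval-L)$, and as $\gamma\to\infty$ one has $w^{(\gamma)}\to\tilde u'$ in $\trop^n$, where $\tilde u'_k=\bias_k-c$ for $k\in D$ and $\tilde u'_k=\zero$ otherwise. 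Continuity of $f$ on $\trop^n$ gives $f(\tilde u')=L+\tilde u'$ (the coordinates off $D$ drift to $\zero$ on both sides), and $\tilde u'\neq\zero$ since $D\neq\emptyset$; your $\tilde u$ differs from $\tilde u'$ by an additive constant, so it works too by additive homogeneity. With that step written out, your proof is complete for the piecewise-affine case needed in this paper, but still falls short of the theorem's stated generality.
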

Note that this theorem is stated in
\cite{polyhedra_equiv_mean_payoff} 
with ``$\sup$'' instead of ``$\max$'' on the right hand side of~\eqref{pty1:collatz-wielandt}, but their proof shows that this supremum is attained. The infimum in~\eqref{pty2:collatz-wielandt} is not attained in general.

The limit $\eigenval$ from Corollary~\ref{corollary:limit_of_shapley} is closely related to the value  $\gameval$ of the corresponding mean payoff game. More precisely, we have $\eigenval = 2\gameval$. In order to prove this statement, we need the following notation: for every fixed policy $\sigma$ of Player Min, we denote by $\shapley^{\sigma}$ the Shapley operator of a $1$-player game in which Player Min can only use $\sigma$. Analogously, for every fixed policy $\tau$ of Player Max, we denote by $\shapley^{\tau}$ the Shapley operator of a $1$-player game in which Player Min can only use $\tau$. By $\shapley^{\sigma, \tau}$ we denote the Shapley operator of a $0$-player game in which Player Min can only use $\sigma$ and Player Max can only use $\tau$. In other words, these operators are defined as follows:
\begin{align*}
\shapley^{\sigma}(x)_{k} &= r^{\sigma(k)}_k + \frac{1}{|\sigma(k)|}\sum_{j \in \sigma(k)}\max_{
\substack{
b \in B^{(j)}\\
b = \{l\}}} (r^b_{j} + x_{l}) \, ,\\
\shapley^{\tau}(x)_{k} &= \min_{a \in A^{(k)}}\Bigl(r^{a}_{k} + \frac{1}{|a|}\sum_{
\substack{
j \in a\\
\tau(j) = \{l\}
}} (r^{\tau(j)}_{j} + x_{l}) \Bigr) \, ,\\
\shapley^{\sigma, \tau}(x)_{k} &= r^{\sigma(k)}_k + \frac{1}{|\sigma(k)|}\sum_{
\substack{
j \in \sigma(k)\\
\tau(j) = \{l\}
}} (r^{\tau(j)}_{j} + x_{l}) \, .
\end{align*}

It is easy to verify that we have the following selection lemma.
\begin{lemma}\label{lemma:two_players_from_one_player} For every $x \in \R^{n}$ we have
\[
\shapley(x) = \min_{\sigma}\shapley^{\sigma}(x) = \max_{\tau}\shapley^{\tau}(x) \, ,
\]
where $\min$ and $\max$ denote the minimum and maximum in the partial order $\le$ on $\R^{n}$, the minimum is taken over the set of policies of Player Min, and the maximum is taken over the set of policies of Player Max.  Similarly, $\shapley^{\sigma}(x) =\max_{\tau}\shapley^{\sigma, \tau}(x)$ and $\shapley^{\tau}(x) = \min_{\sigma}\shapley^{\sigma, \tau}(x)$.
\end{lemma}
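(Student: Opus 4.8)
The statement asserts a "selection" principle: the two-player Shapley operator is obtained by minimizing the one-player (Max-only) operators $\shapley^\sigma$ over all Min-policies $\sigma$, and simultaneously by maximizing the one-player (Min-only) operators $\shapley^\tau$ over all Max-policies $\tau$; and an analogous fact relates each one-player operator to the zero-player operators $\shapley^{\sigma,\tau}$. The plan is to prove this directly by unfolding the definitions, using the elementary fact that minimizing (or maximizing) a function that decomposes coordinatewise can be done coordinate by coordinate, together with the commutation of $\min$ and $\max$ with the monotone, affine building blocks (addition of a constant, averaging, and summation) appearing in~\eqref{e-def-shapley}.

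First I would fix $x \in \R^n$ and a coordinate $k \in [n]$, and look at $\shapley(x)_k$ as given by~\eqref{e-def-shapley}. For a fixed action $a = \{i,j\} \in A^{(k)}$, the inner expression $r^a_k + \frac12\bigl(\max_{b \in B^{(i)}}(r^b_i + x_l) + \max_{b \in B^{(j)}}(r^b_j + x_l)\bigr)$ depends only on $x$ and on the choices of $b$ at states $i$ and $j$; by definition of $\shapley^{\sigma}$, choosing $a = \sigma(k)$ and leaving the inner maxima free recovers exactly $\shapley^{\sigma}(x)_k$ once we also fix, for each state $j$ of Player Max, the best response in $B^{(j)}$. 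Since a policy $\sigma$ of Player Min is nothing but an independent choice of $\sigma(k) \in A^{(k)}$ for every $k$, and the $k$-th coordinate of $\shapley^\sigma(x)$ only involves $\sigma(k)$, we get $\min_\sigma \shapley^\sigma(x)_k = \min_{a \in A^{(k)}} \shapley^{\sigma}(x)_k$ for any $\sigma$ with $\sigma(k) = a$, which is precisely $\shapley(x)_k$ after noting that the inner maxima over $B^{(i)}$, $B^{(j)}$ in~\eqref{e-def-shapley} are unconstrained. This gives $\shapley(x) = \min_\sigma \shapley^\sigma(x)$, the minimum being attained coordinatewise (hence being a genuine minimum in the partial order $\le$, not merely an infimum). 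The identity $\shapley^{\sigma}(x) = \max_\tau \shapley^{\sigma,\tau}(x)$ is obtained the same way: with $\sigma$ frozen, the only remaining freedom in $\shapley^{\sigma}(x)_k = r^{\sigma(k)}_k + \tfrac1{|\sigma(k)|}\sum_{j \in \sigma(k)} \max_{b \in B^{(j)}}(r^b_j + x_l)$ is the choice of $b \in B^{(j)}$ for $j \in \sigma(k)$, and a Max-policy $\tau$ is exactly an independent choice of $\tau(j) \in B^{(j)}$ for each $j$; since $\sum$ and the averaging $\tfrac1{|\sigma(k)|}(\cdot)$ are order-preserving, the maximum over $\tau$ is taken termwise and equals $\shapley^\sigma(x)$.

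For the dual identities $\shapley(x) = \max_\tau \shapley^\tau(x)$ and $\shapley^\tau(x) = \min_\sigma \shapley^{\sigma,\tau}(x)$, one reasons symmetrically, but here the minor subtlety is that the $\max$ over $\tau$ does not commute with the $\min$ over $a \in A^{(k)}$ in the naive sense; rather one uses that $\shapley^\tau(x)_k = \min_{a \in A^{(k)}}\bigl(r^a_k + \tfrac1{|a|}\sum_{j \in a}(r^{\tau(j)}_j + x_{l})\bigr)$ already performs the Min-minimization over actions with $\tau$ fixed, so $\max_\tau \shapley^\tau(x)_k = \max_\tau \min_{a}(\cdots)$; and since for each fixed $a$ the quantity $r^a_k + \tfrac1{|a|}\sum_{j\in a}(r^{\tau(j)}_j + x_l)$ is maximized over $\tau$ by choosing the termwise best $\tau(j) \in B^{(j)}$ — a choice that is independent of $a$ and of the other coordinates — the outer $\max$ and inner $\min$ do commute here, yielding $\max_\tau \min_a(\cdots) = \min_a \max_\tau(\cdots) = \min_a\bigl(r^a_k + \tfrac1{|a|}\sum_{j\in a}\max_{b \in B^{(j)}}(r^b_j + x_l)\bigr) = \shapley(x)_k$. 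The remaining identity $\shapley^\tau(x) = \min_\sigma \shapley^{\sigma,\tau}(x)$ is immediate from the definitions of $\shapley^\tau$ and $\shapley^{\sigma,\tau}$ by the same coordinatewise-independence argument as before.

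The only point that deserves care — and hence the "main obstacle", though it is a mild one — is making the interchange of $\max_\tau$ and $\min_a$ rigorous: this is exactly the situation where a $\max\min$ equals a $\min\max$ because the maximizing player (Max) has a strategy, namely the termwise-optimal $\tau$, that is simultaneously optimal against every action $a$ of the minimizing player, so no genuine game-theoretic saddle-point argument is needed; it is the trivial half of a minimax equality. Everything else is bookkeeping: verifying that each building block of~\eqref{e-def-shapley} ($+\,r$, averaging, summation over the one or two elements of $a$) is order-preserving, so that coordinatewise optima are global optima in the order $\le$, and that policies decompose as products of independent local choices indexed by states, so that the optimization factorizes across coordinates and across states.
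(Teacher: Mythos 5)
Your proof is correct, and it supplies exactly the direct verification that the paper leaves implicit (the authors only write ``It is easy to verify'' and give no proof). The two key observations you isolate --- that the $k$-th coordinate of $\shapley^{\sigma}(x)$ depends on $\sigma$ only through $\sigma(k)$, so the optimization factorizes coordinatewise and the optimum is attained in the partial order $\le$, and that the termwise-optimal $\tau$ is simultaneously optimal against every action $a$, which justifies the $\max_\tau\min_a = \min_a\max_\tau$ interchange --- are precisely what makes the selection lemma hold.
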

Moreover, an easy induction shows the following result:

\begin{lemma}\label{lemma:finite_game}
Suppose that Player Min uses a policy $\sigma$ and that Player Max uses a policy~$\tau$. Then, for every initial state $k = k_{1} \in [n]$ the expected total payoff of Player Max after the $N$th turn is equal to $\bigl((\shapley^{\sigma,\tau})^{N}(0)\bigr)_{k}$. In other words, we have
\[
\forall N, \ \E_{\sigma, \tau}\Bigl(\sum_{p = 1}^N \bigl(r^{\sigma(k_p)}_{k_p} + r^{\tau(i_p)}_{i_p}\bigr)\Bigr) = \bigl( (\shapley^{\sigma, \tau} \circ \shapley^{\sigma, \tau} \circ \dots \circ \shapley^{\sigma, \tau}) (0) \bigr)_{k} = \bigl( (\shapley^{\sigma,\tau})^{N}(0) \bigr)_{k} \, .
\]
\end{lemma}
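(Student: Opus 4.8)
The plan is to prove Lemma~\ref{lemma:finite_game} by induction on $N$, unravelling the definition of $\shapley^{\sigma,\tau}$ and matching it with the one-step dynamics of the Markov chain generated by the fixed pair of policies $(\sigma,\tau)$. Recall that under this pair of policies the pawn moves deterministically from a Min-state $k$ according to $\sigma(k)$, with Nature splitting uniformly if $\sigma(k)$ is a doubleton, and then deterministically from each Max-state $j$ according to $\tau(j)$; so the trajectory $k_1, i_1, k_2, i_2, \dots$ in the Markov chain on $\chainspace = [m]\dunion[n]$ is entirely governed by $\sigma,\tau$ together with the random choices made by Nature. The payoff collected during one full ``round'' (a Min-move followed by a Max-move) starting from $k_p$ is $r^{\sigma(k_p)}_{k_p} + r^{\tau(i_p)}_{i_p}$, and $i_p$ is the Max-state reached from $k_p$.

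First I would set $h^{(N)}_k \coloneqq \E_{\sigma,\tau}\bigl(\sum_{p=1}^N (r^{\sigma(k_p)}_{k_p} + r^{\tau(i_p)}_{i_p})\bigr)$ with $k_1 = k$, and establish the base case $N = 1$: starting from $k$, Player Min plays $a = \sigma(k)$, Player Max immediately gets $r^a_k$; then Nature picks $j \in a$ uniformly (or $j = i$ if $a$ is a singleton), Player Max plays $\tau(j) = \{l\}$ and collects $r^{\tau(j)}_j$; taking expectation over Nature's choice gives $h^{(1)}_k = r^{\sigma(k)}_k + \frac{1}{|\sigma(k)|}\sum_{j \in \sigma(k)} r^{\tau(j)}_j = \bigl(\shapley^{\sigma,\tau}(0)\bigr)_k$, using the displayed formula for $\shapley^{\sigma,\tau}$ with $x = 0$ and the convention $\frac12(-\infty) = -\infty$ to handle any absent transitions. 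For the inductive step, I would condition on the outcome of the first round: after the first Min-move and Max-move the pawn sits on some new Min-state $k_2 = l$ (where $\tau(j) = \{l\}$ and $j$ is Nature's choice), and by the Markov property the conditional expectation of $\sum_{p=2}^N(\cdots)$ given $k_2 = l$ equals $h^{(N-1)}_l$. Hence $h^{(N)}_k = r^{\sigma(k)}_k + \frac{1}{|\sigma(k)|}\sum_{j \in \sigma(k)}\bigl(r^{\tau(j)}_j + h^{(N-1)}_{l(j)}\bigr)$ where $\tau(j) = \{l(j)\}$; by the induction hypothesis $h^{(N-1)} = (\shapley^{\sigma,\tau})^{N-1}(0)$, and the right-hand side is exactly $\bigl(\shapley^{\sigma,\tau}((\shapley^{\sigma,\tau})^{N-1}(0))\bigr)_k = \bigl((\shapley^{\sigma,\tau})^N(0)\bigr)_k$ by the definition of $\shapley^{\sigma,\tau}$.

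The only real subtlety — and the step I would be most careful about — is the bookkeeping of the ``half-round'' structure and Nature's branching: one must check that the factor $\frac{1}{|\sigma(k)|}$ in the definition of $\shapley^{\sigma,\tau}$ correctly encodes Nature's uniform coin flip (probability $1/2$ each when $|\sigma(k)| = 2$, probability $1$ when $|\sigma(k)| = 1$), and that the indices align so that the Max-reward $r^{\tau(j)}_j$ is indexed by the state $j$ actually visited. I would also note explicitly that the linearity of expectation together with the tower property (strong Markov property at the return time to Min-states) is what licenses the conditioning argument, and that everything is happening over $\R$ (all rewards finite along the realized trajectory, with the $-\infty$ convention only a formal device for non-existent moves that are never taken under a fixed policy). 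Once these points are in place, the claimed identity $\E_{\sigma,\tau}\bigl(\sum_{p=1}^N(r^{\sigma(k_p)}_{k_p} + r^{\tau(i_p)}_{i_p})\bigr) = \bigl((\shapley^{\sigma,\tau})^N(0)\bigr)_k$ follows, and the chain of equalities in the statement is just spelling out $(\shapley^{\sigma,\tau})^N$ as an $N$-fold composition.
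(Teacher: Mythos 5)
Your proof is correct and follows essentially the same route as the paper's: an induction on $N$, with the base case $h^{(1)}_k = \shapley^{\sigma,\tau}(0)_k$ and the one-round conditioning recurrence $h^{(N)}_k = r^{\sigma(k)}_k + \frac{1}{|\sigma(k)|}\sum_{j\in\sigma(k)}\bigl(r^{\tau(j)}_j + h^{(N-1)}_{l(j)}\bigr)$, which is exactly the recurrence the paper writes down for its function $f(N,k)$. The extra care you take over Nature's uniform branching and the $-\infty$ convention is sound but does not change the substance of the argument.
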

\begin{proof}
Let us denote the expected total payoff of Player Max after the $N$th turn by $f(N, k)$, where $k \in [n]$ denotes the initial state. Observe that we have the relations
\[
f(1, k) = r^{\sigma(k)}_k + \frac{1}{|\sigma(k)|}\sum_{
\substack{
j \in \sigma(k)\\
\tau(j) = \{l\}
}} r^{\tau(j)}_{j} = \shapley^{\sigma, \tau}(0)_{k}
\]
and 
\[
f(N + 1, k) = r^{\sigma(k)}_k + \frac{1}{|\sigma(k)|}\sum_{
\substack{
j \in \sigma(k)\\
\tau(j) = \{l\}
}} \bigl(r^{\tau(j)}_{j} + f(N,l) \bigr) \, .
\]
This proves by induction that $f(N, k) = \bigl( (\shapley^{\sigma,\tau})^{N}(0) \bigr)_{k}$.
\end{proof}

\begin{lemma}\label{lemma:same_invariant_line}
There exists a pair of policies $(\overbar{\sigma}, \overbar{\tau})$ such that $\shapley$, $\shapley^{\overbar{\sigma}}$, and $\shapley^{\overbar{\tau}}$ have the same invariant half-line $(\bias, \eigenval)$.
\end{lemma}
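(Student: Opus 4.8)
The plan is to start from an invariant half-line $(\bias,\eigenval)$ of $\shapley$ itself, which exists by Lemma~\ref{lemma:basic_shapley_properties} together with Theorem~\ref{theorem:kohlberg}; fix $\gamma_{0}\ge 0$ with $\shapley(\bias+\gamma\eigenval)=\bias+(\gamma+1)\eigenval$ for all $\gamma\ge\gamma_{0}$, and then extract a pair of policies $(\overbar{\sigma},\overbar{\tau})$ for which the one-player operators $\shapley^{\overbar{\sigma}}$ and $\shapley^{\overbar{\tau}}$ retain exactly this half-line. I would treat Player Min and Player Max separately and symmetrically: the choice of $\overbar{\sigma}$ and the choice of $\overbar{\tau}$ do not interact, so it suffices to produce each on its own, after which the pair $(\overbar{\sigma},\overbar{\tau})$ works — because $\shapley$ already has $(\bias,\eigenval)$ as an invariant half-line by construction.

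For Player Min, I would first invoke the selection lemma (Lemma~\ref{lemma:two_players_from_one_player}) in the form $\shapley=\min_{\sigma}\shapley^{\sigma}$, where the minimum over policies is attained (this records the positional structure of Min's decisions): for each $\gamma\ge\gamma_{0}$ there is a policy $\sigma_{\gamma}$ with $\shapley^{\sigma_{\gamma}}(\bias+\gamma\eigenval)=\shapley(\bias+\gamma\eigenval)=\bias+(\gamma+1)\eigenval$, whereas $\shapley^{\sigma}(\bias+\gamma\eigenval)\ge\bias+(\gamma+1)\eigenval$ for every policy $\sigma$. Since Player Min has only finitely many policies, by pigeonhole some fixed policy $\overbar{\sigma}$ coincides with $\sigma_{\gamma}$ along an unbounded increasing sequence $\gamma_{1}<\gamma_{2}<\cdots\to\infty$, so that $\shapley^{\overbar{\sigma}}(\bias+\gamma_{i}\eigenval)=\bias+(\gamma_{i}+1)\eigenval$ for all $i$.

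The key step is then to upgrade ``equality along the sequence $(\gamma_{i})$'' to ``equality for all large $\gamma$''. For this I would consider the map $\gamma\mapsto\shapley^{\overbar{\sigma}}(\bias+\gamma\eigenval)-\bigl(\bias+(\gamma+1)\eigenval\bigr)$ from $\R$ to $\R^{n}$. As in the proof of Lemma~\ref{lemma:basic_shapley_properties}, the restriction of $\shapley^{\overbar{\sigma}}$ to $\R^{n}$ is piecewise affine with finitely many pieces, so this map is piecewise affine in $\gamma$ with finitely many breakpoints, hence affine on $[\Gamma,+\infty)$ for some $\Gamma$. It vanishes at the infinitely many $\gamma_{i}\ge\Gamma$ and is nonnegative on $[\gamma_{0},+\infty)$ by the inequality above, so this affine piece is identically zero; thus $\shapley^{\overbar{\sigma}}(\bias+\gamma\eigenval)=\bias+(\gamma+1)\eigenval$ for all $\gamma$ large enough, i.e.\ $(\bias,\eigenval)$ is an invariant half-line of $\shapley^{\overbar{\sigma}}$.

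Finally I would run the mirror-image argument for Player Max, using $\shapley=\max_{\tau}\shapley^{\tau}$ from Lemma~\ref{lemma:two_players_from_one_player}, the fact that this maximum is attained by a single (positional, perfect-information) policy, finiteness of the set of policies of Player Max, and the piecewise affinity of each $\shapley^{\tau}$ on $\R^{n}$; this produces $\overbar{\tau}$ with invariant half-line $(\bias,\eigenval)$, and then $(\overbar{\sigma},\overbar{\tau})$ is as required. The main obstacle I anticipate is exactly the passage from the sampled equalities to an equality on a ray: it is here that both piecewise affinity (to guarantee an ``eventually affine'' regime) and the fact that the extrema over policies are achieved by a single positional policy — not merely coordinate-wise — are essential, and one must be careful that infinitely many of the relevant $\gamma_{i}$ fall beyond the last breakpoint.
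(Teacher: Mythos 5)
Your proposal is correct and follows essentially the same route as the paper's proof: fix an invariant half-line of $\shapley$, use the selection lemma plus finiteness of the policy set to get a single policy achieving equality along an unbounded sequence of $\gamma$'s, and then invoke piecewise affinity of $\shapley^{\overbar{\sigma}}$ along the ray to upgrade the sampled equalities to an identity for all large $\gamma$. The only cosmetic difference is that you argue the eventually-affine discrepancy map vanishes identically because it has infinitely many zeros beyond the last breakpoint, whereas the paper equivalently matches the linear and constant coefficients; both are fine.
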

\begin{proof}
Let $(\bias, \eigenval)$ denote any invariant half-line of $\shapley$. We start by showing the existence of~$\overbar{\sigma}$. By Lemma~\ref{lemma:two_players_from_one_player} we have $\shapley(\bias + \gamma \eigenval) = \min_{\sigma} \shapley^{\sigma}(\bias + \gamma \eigenval)$. Therefore, there exists a strategy $\overbar{\sigma}$ such that the equality $\bias + (N + 1)\eigenval = \shapley^{\overbar{\sigma}}(\bias + N \eigenval)$ is verified for infinitely many values of $N \in \N^{*}$. Since the operator $\shapley^{\overbar{\sigma}}$ is piecewise-affine, there exists an affine function $Ax + b$ such that $\shapley^{\overbar{\sigma}}(\bias + \gamma \eigenval) = A(\bias + \gamma \eigenval) + b$ for all $\gamma \ge 0$ large enough. In particular, $\bias + \eigenval + N\eigenval = A\bias + b + N A\eigenval$ for infinitely many values of $N \in \N^{*}$. Hence $A\eigenval = \eigenval$ and $\bias + \eigenval = A \bias + b$. Therefore $\shapley^{\overbar{\sigma}}(\bias + \gamma \eigenval) = A(\bias + \gamma \eigenval) + b = \bias + (\gamma + 1)\eigenval$ for all $\gamma \ge 0$ large enough. The construction of $\overbar{\tau}$ is analogous.
\end{proof}

\begin{theorem}\label{theorem:optimal_policies_proof}
Policies $(\overbar{\sigma}, \overbar{\tau})$ are optimal. Furthermore, the value of the game is equal to $\eigenval/2$.
\end{theorem}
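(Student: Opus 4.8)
The plan is to verify directly that the pair $(\overbar{\sigma}, \overbar{\tau})$ supplied by Lemma~\ref{lemma:same_invariant_line} satisfies the two inequalities of Theorem~\ref{th:optimal_policies} with $\gameval := \eigenval/2$. Optimality is then immediate (it \emph{is} the assertion of those two inequalities), and the value is $\eigenval/2$ because the value vector in Theorem~\ref{th:optimal_policies} is unique. The only nontrivial ingredient is a dictionary between long-run payoffs and Cesàro iterates of the $0$-player operators, together with a monotonicity argument.

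First I would record that for any policies $\sigma, \tau$ and any initial state $k \in [n]$,
\[
g_k(\sigma, \tau) = \lim_{N \to \infty} \frac{1}{2N}\bigl((\shapley^{\sigma, \tau})^N(0)\bigr)_k \, .
\]
This follows by combining the definition~\eqref{eq:average_payoff} of the average payoff with Lemma~\ref{lemma:finite_game}. The operator $\shapley^{\sigma,\tau}$ is affine on $\R^n$ with a stochastic matrix as linear part, hence piecewise affine and nonexpansive; so Theorem~\ref{theorem:kohlberg} applies to it and Corollary~\ref{corollary:limit_of_shapley} shows that the Cesàro limit $\lim_{N} \tfrac1N (\shapley^{\sigma,\tau})^N(0)$ exists. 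In view of the displayed identity this limit equals $2g(\sigma,\tau)$, where I write $g(\sigma,\tau) := (g_k(\sigma,\tau))_{k \in [n]}$.

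Next I would exploit that, by Lemma~\ref{lemma:same_invariant_line}, the one-player operators $\shapley^{\overbar{\sigma}}$ and $\shapley^{\overbar{\tau}}$ share the invariant half-line $(\bias, \eigenval)$ with $\shapley$. Fix an arbitrary policy $\tau$ of Player Max. By the selection Lemma~\ref{lemma:two_players_from_one_player} we have $\shapley^{\overbar{\sigma}, \tau} \le \shapley^{\overbar{\sigma}}$ pointwise on $\R^n$, and both maps are order preserving; a one-line induction (apply $\shapley^{\overbar{\sigma}}$, which is monotone, to $(\shapley^{\overbar{\sigma},\tau})^{N}(0) \le (\shapley^{\overbar{\sigma}})^{N}(0)$, then use $\shapley^{\overbar{\sigma},\tau} \le \shapley^{\overbar{\sigma}}$) gives $(\shapley^{\overbar{\sigma},\tau})^N(0) \le (\shapley^{\overbar{\sigma}})^N(0)$ for all $N$. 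Dividing by $N$ and passing to the limit, the left side tends to $2g(\overbar{\sigma},\tau)$ by the previous paragraph, and the right side tends to $\eigenval$ by Corollary~\ref{corollary:limit_of_shapley} applied to $\shapley^{\overbar{\sigma}}$ (which, like every Shapley operator, is piecewise affine and nonexpansive by Lemma~\ref{lemma:basic_shapley_properties}, and whose invariant half-line is $(\bias,\eigenval)$). Hence $g_k(\overbar{\sigma},\tau) \le \eigenval_k/2$ for all $k$ and all $\tau$, which is the second inequality of Theorem~\ref{th:optimal_policies}. Symmetrically, using $\shapley^{\sigma, \overbar{\tau}} \ge \shapley^{\overbar{\tau}}$ from Lemma~\ref{lemma:two_players_from_one_player} together with the invariant half-line of $\shapley^{\overbar{\tau}}$, one gets $g_k(\sigma, \overbar{\tau}) \ge \eigenval_k/2$ for all $k$ and all $\sigma$, which is the first inequality.

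Finally, setting $\sigma = \overbar{\sigma}$ in the first inequality and $\tau = \overbar{\tau}$ in the second yields $g_k(\overbar{\sigma},\overbar{\tau}) = \eigenval_k/2$, so $(\overbar{\sigma},\overbar{\tau})$ is a saddle point with associated vector $\eigenval/2$. By the uniqueness of the value vector in Theorem~\ref{th:optimal_policies}, we conclude $\gameval = \eigenval/2$ and that $(\overbar{\sigma},\overbar{\tau})$ are optimal. I expect the only place needing care is the claim that the Cesàro limits $\lim_N \tfrac1N (\shapley^{\sigma,\tau})^N(0)$ exist and represent $2g(\sigma,\tau)$ — this is where the affine/stochastic structure of the $0$-player operators (equivalently, Kohlberg's theorem via Corollary~\ref{corollary:limit_of_shapley}) is essential; the rest is purely monotonicity bookkeeping.
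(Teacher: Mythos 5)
Your proposal is correct and follows essentially the same route as the paper: both express $g_k(\sigma,\tau)$ via Lemma~\ref{lemma:finite_game} as $\lim_N \frac{1}{2N}\bigl((\shapley^{\sigma,\tau})^N(0)\bigr)_k$, then combine the selection Lemma~\ref{lemma:two_players_from_one_player}, the shared invariant half-line from Lemma~\ref{lemma:same_invariant_line}, and Corollary~\ref{corollary:limit_of_shapley} to bound $g_k(\overbar{\sigma},\tau)$ above and $g_k(\sigma,\overbar{\tau})$ below by $\eigenval_k/2$. You merely make explicit the monotonicity induction and the existence of the Cesàro limit for the $0$-player operator, both of which the paper leaves implicit.
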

\begin{proof}
Suppose that Player Min uses $\overbar{\sigma}$. Let $\tau$ be any policy of Player Min. By Lemma~\ref{lemma:finite_game} we have
\[
\forall \tau, \ g_k(\overbar{\sigma}, \tau) = \lim_{N \to \infty} \E_{\overbar{\sigma}, \tau}\Bigl(\frac{1}{2N} \sum_{p = 1}^N \bigl(r^{\overbar{\sigma}(k_p)}_{k_p} + r^{\tau(i_p)}_{i_p}\bigr)\Bigr) = \lim_{N \to \infty}\frac{(\shapley^{\overbar{\sigma}, \tau})^N(0)}{2N} \, .
\]
Therefore, Lemma~\ref{lemma:two_players_from_one_player}, Lemma~\ref{lemma:same_invariant_line}, and Corollary~\ref{corollary:limit_of_shapley} show that
\[
\forall \tau, \ g_k(\overbar{\sigma}, \tau) \le \lim_{N \to \infty}\frac{(\shapley^{\overbar{\sigma}})^N(0)}{2N} = \frac{1}{2}\eigenval_{k}.
\]
Analogously, if Player Max uses $\overbar{\tau}$, we have
\[ 
\forall \sigma, \ g_k(\sigma, \overbar{\tau}) = \lim_{N \to \infty} \E_{\sigma, \overbar{\tau}}\Bigl(\frac{1}{2N} \sum_{p = 1}^N \bigl(r^{\sigma(k_p)}_{k_p} + r^{\overbar{\tau}(i_p)}_{i_p}\bigr)\Bigr) \ge \frac{1}{2}\eigenval_{k} \, .
\]
Hence $(\overbar{\sigma}, \overbar{\tau})$ are optimal and $g_{k}(\overbar{\sigma}, \overbar{\tau}) = \eigenval_{k}/2$ for every $k \in [n]$.
\end{proof}

\begin{theorem}\label{theorem:shapley_and_games}
Player Max has at least one winning initial state if and only if the set $\{x \in \trop ^{n} \colon x \le \shapley(x) \}$ is nontrivial (i.e., contains a point different than $(\zero, \dots, \zero)$).
\end{theorem}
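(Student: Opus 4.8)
The plan is to route the statement through the value vector of the game and the Collatz--Wielandt identity. Recall that, by Theorem~\ref{theorem:optimal_policies_proof}, the value $\gameval$ of the game equals $\eigenval/2$, where $(\bias,\eigenval)$ is an invariant half-line of $\shapley$ (such a half-line exists by Lemma~\ref{lemma:basic_shapley_properties} together with Theorem~\ref{theorem:kohlberg}). Since a state $k\in[n]$ is winning for Player Max exactly when $\gameval_k\ge 0$, i.e.\ when $\eigenval_k\ge 0$, the property ``Player Max has at least one winning initial state'' is equivalent to $\max_{k\in[n]}\eigenval_k\ge 0$. It thus suffices to prove that $\max_{k\in[n]}\eigenval_k\ge 0$ if and only if the set $\{x\in\trop^{n}\colon x\le\shapley(x)\}$ is nontrivial.

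To do so, I would first express $\max_k\eigenval_k$ as a growth rate of the iterates of $\shapley$. Fix any $x\in\R^{n}$. Applying Corollary~\ref{corollary:limit_of_shapley} to $f=\shapley$ (legitimate by Lemma~\ref{lemma:basic_shapley_properties}) yields $\max_{k\in[n]}\eigenval_k=\lim_{N\to\infty}\max_{k\in[n]}\frac{1}{N}\shapley^{N}_k(x)$. Next, I would apply the Collatz--Wielandt property~\eqref{pty1:collatz-wielandt} of Theorem~\ref{theorem:collatz-wielandt}, whose hypotheses (order preservation, additive homogeneity, continuity) are exactly the properties of $\shapley$ recorded in Lemma~\ref{lemma:basic_shapley_properties}. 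This identifies the limit with $\max\{\pert\in\trop\colon\exists u\in\trop^{n},\ u\neq\zero,\ \shapley(u)\ge\pert+u\}$, and, as noted just after that theorem, this maximum is attained. Hence $\max_k\eigenval_k=\max\{\pert\in\trop\colon\exists u\neq\zero,\ \shapley(u)\ge\pert+u\}$.

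It then remains to convert this equality into the desired equivalence by an elementary monotonicity argument. If $\max_k\eigenval_k\ge 0$, then, the Collatz--Wielandt maximum being attained, there is $u\neq\zero$ with $\shapley(u)\ge\pert^{\star}+u$ for $\pert^{\star}=\max_k\eigenval_k\ge 0$; since $\pert^{\star}+u\ge u$ coordinatewise (with the convention $\pert^{\star}+\zero=\zero$), we get $\shapley(u)\ge u$, so the set $\{x\colon x\le\shapley(x)\}$ is nontrivial. Conversely, if $\shapley(u)\ge u$ for some $u\neq\zero$, then $u$ witnesses the value $\pert=0$ in the Collatz--Wielandt maximum, so $\max_k\eigenval_k\ge 0$, i.e.\ some initial state is winning for Player Max.

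The only delicate points are auxiliary facts already established in the paper: that the Collatz--Wielandt supremum in~\eqref{pty1:collatz-wielandt} is genuinely attained (so that an actual witness $u$ exists for $\pert^{\star}$), and that $\shapley$ indeed satisfies the hypotheses of Theorem~\ref{theorem:kohlberg} and of Corollary~\ref{corollary:limit_of_shapley}; dealing with the coordinates of $u$ equal to $\zero$ in the inequality $\pert^{\star}+u\ge u$ is routine. Granting these, the argument is just a chaining of Theorem~\ref{theorem:optimal_policies_proof}, Corollary~\ref{corollary:limit_of_shapley}, and Theorem~\ref{theorem:collatz-wielandt}, with no substantial new obstacle.
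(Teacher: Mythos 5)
Your proposal is correct and follows essentially the same route as the paper's proof: both chain Theorem~\ref{theorem:optimal_policies_proof} and Corollary~\ref{corollary:limit_of_shapley} to identify $\max_k 2\gameval_k$ with the growth rate $\lim_N \max_k \shapley^N_k(x)/N$, and then invoke the Collatz--Wielandt identity~\eqref{pty1:collatz-wielandt} to translate the sign of this quantity into the nontriviality of $\{x \colon x \le \shapley(x)\}$. The only difference is that you spell out the final monotonicity step (including the attainment of the maximum and the $\zero$-coordinate convention) that the paper compresses into ``in particular.''
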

\begin{proof}
By Theorem~\ref{theorem:optimal_policies_proof} and Corollary~\ref{corollary:limit_of_shapley} we have
\[
\max_{k \in [n]} 2\gameval_{k} = \lim_{N \to \infty} \max_{k \in [n]} \frac{1}{N}\shapley^{N}_{k}(x) \, .
\]
Hence, by~\eqref{pty1:collatz-wielandt}, we have
\[
\max_{k \in [n]} 2\gameval_{k} = \max \{ \pert \in \trop \colon \exists x \in \trop^{n}, x \neq \zero, \pert + x \le \shapley(x) \} \, .
\]
In particular, the inequality $\max_{k \in [n]} \gameval_{k} \ge 0$ holds if and only if the set $\{x \in \trop ^{n} \colon x \le \shapley(x) \}$ is nontrivial.
\end{proof}

The Shapley operator $F$ may be thought of as a non-linear Markov operator,
and so, the vectors $x$ such that $x\leq F(x)$ may be thought of as
non-linear {\em sub-harmonic} vectors. Theorem~\ref{theorem:shapley_and_games}
shows that the existence of a winning state is characterized by 
the existence of a non-trivial sub-harmonic vector.

\section{Equivalence between stochastic games and tropical spectrahedra}\label{sec:tropical_games}

\subsection{The stochastic game associated with a Metzler tropical spectrahedron}
We now describe the connection between tropical spectrahedra and stochastic mean payoff games. Let us consider a tropical Metzler spectrahedron $\spectra$ associated with the matrices $Q^{(1)}, \dots, Q^{(n)} \in \strop^{m \times m}$. We construct a stochastic mean payoff game $\Gamma$ consisting of $m$ states of Player Max and $n$ states of Player Min. For each state $k \in [n]$ of Player Min, the set $A^{(k)}$ of actions available to Player Min at this state consists of:
\begin{compactitem}
\item the actions $\{i\}$ with payment $-\abs{Q^{(k)}_{i i}}$ for all $i \in [m]$ such that $Q^{(k)}_{i i} \in \negtrop$;
\item the actions $\{i, j\}$ with payment $-\abs{Q^{(k)}_{i j}}$ for all $i < j$ such that $Q^{(k)}_{i j} \in \negtrop$.
\end{compactitem}
For every state $i \in [m]$ of Player Max, the set $B^{(i)}$ of actions available to Player Max at this state is formed by the actions $\{k\}$ with payment $Q^{(k)}_{i i}$ for all states $k \in [n]$ satisfying $Q^{(k)}_{i i} \in \postrop$.

Recall that in the games which we consider, every state has to be equipped with at least one action, \ie, the sets $A^{(k)}$ and $B^{(i)}$ must be nonempty. In consequence, our construction is valid provided that the following assumption on the matrices $Q^{(k)}$ is satisfied:
\begin{assumption}\label{assump:well_formed}
\begin{compactenum}[(a)]
\item\label{item:min} For all $k \in [n]$, the matrix $Q^{(k)}$ has at least one coefficient in $\negtrop$.
\item\label{item:max} For all $i \in [m]$, there exists $k \in [n]$ such that the diagonal coefficient $Q^{(k)}_{i i}$ belongs to $\postrop$.
\end{compactenum}
\end{assumption}
Concerning the nontriviality of tropical Metzler spectrahedra, Assumption~\ref{assump:well_formed} can be made without loss of generality, up to extracting submatrices from $Q^{(k)}$ or eliminating some of them, as shown by the next lemma.

\begin{lemma}\label{lemma:well_formed}
Let $Q^{(1)}, \dots, Q^{(n)} \in \strop^{m \times m}$ be symmetric Metzler matrices, and $\spectra$ be the associated tropical spectrahedron. We can build (in poly-time) symmetric Metzler matrices  $R^{(1)}, \dots, R^{(q)} \in \strop^{p \times p}$ ($p \leq m$, $q \leq n$) satisfying Assumption~\ref{assump:well_formed} such that the associated tropical spectrahedron is nontrivial if and only if $\spectra$ is nontrivial.
\end{lemma}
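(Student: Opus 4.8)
The plan is to give a polynomial-time procedure that iteratively strips away the two possible obstructions to Assumption~\ref{assump:well_formed}, tracking the triviality status via the explicit description of $\spectra$ in Lemma~\ref{lemma:metzler_spectrahedra} and~\eqref{eq:first_kind}--\eqref{eq:second_kind}. The first step handles a failure of condition~(a): if some $Q^{(k)}$ has no tropically negative entry, then, being Metzler, it has all off-diagonal entries equal to $\zero$ and all diagonal entries in $\postrop \cup \{\zero\}$, and $\spectra$ is nontrivial --- the point with $x_k = 0$ and $x_l = \zero$ for $l \neq k$ satisfies every constraint, since the only term that is not $\zero$, namely $|Q^{(k)}_{ii}| + x_k$ in~\eqref{eq:first_kind}, occurs only on a left-hand side. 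In that case I output a fixed canonical nontrivial Metzler instance satisfying Assumption~\ref{assump:well_formed}, e.g.\ the single $2 \times 2$ matrix with zero diagonal and $\tminus 0$ off-diagonal, whose spectrahedron is all of $\trop$; this instance has order $2$ and one variable, which obey $p \le m$, $q \le n$ as soon as $m \ge 2$ (the case $m = 1$ being small enough to decide and settle by direct inspection).

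If condition~(a) holds, I turn to condition~(b). If some $i \in [m]$ has no $k$ with $Q^{(k)}_{ii} \in \postrop$, then $Q_{ii}^{+}(x) = \zero$ for every $x$, so~\eqref{eq:first_kind} forces $Q_{ii}^{-}(x) = \zero$ and~\eqref{eq:second_kind} forces $Q_{ij}(x) = \zero$ for every $j \ne i$; together these say $x_l = \zero$ for every variable $l$ occurring in row $i$ of the pencil (those with $Q^{(l)}_{ij} \ne \zero$ for some $j$). Calling this set of variables $Z_i$, I delete row and column $i$ from all matrices and delete the variables in $Z_i$, producing a smaller Metzler instance $\spectra'$. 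A two-way check gives that $\spectra'$ is nontrivial iff $\spectra$ is: the restriction of a nonzero $x \in \spectra$ to the surviving coordinates is still nonzero and solves the reduced system, because the discarded coordinates equal $\zero$ and $\zero$-valued terms do not change tropical maxima; conversely the extension of a nonzero $x' \in \spectra'$ by $\zero$ on $Z_i$ solves the full system, every constraint mentioning row $i$ collapsing to $\zero \ge \zero$. The discarded matrices $Q^{(l)}$, $l \in Z_i$, are exactly those with a nonzero entry in row $i$, so each retained matrix keeps all of its entries and condition~(a) is preserved. Iterating, the order $p$ strictly decreases, so the loop terminates; when it stops, no row is ``bad'', which is precisely condition~(b), so the current matrices $R^{(1)}, \dots, R^{(q)}$ --- with $p \le m$, $q \le n$ --- satisfy Assumption~\ref{assump:well_formed} and have the same triviality status as $\spectra$. (If every row has been deleted then, condition~(a) still holding so that no retained matrix vanishes identically, no variables survive either, and the empty instance is returned, which is trivial and vacuously satisfies Assumption~\ref{assump:well_formed}.) Since there are at most $m$ rounds of linear work, the reduction is polynomial.

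The crux I anticipate is the two-way equivalence for a single ``bad''-row deletion together with the observation that it never reinstates a violation of condition~(a) --- essentially identifying the right loop invariant (``condition~(a) holds; deleting a bad row removes exactly the variables that $\spectra$ forces to $\zero$, and removes no entry of any retained matrix''). The genuinely degenerate inputs ($m = 1$, or the empty instance) also need to be inspected separately, so that the object actually returned literally satisfies Assumption~\ref{assump:well_formed}.
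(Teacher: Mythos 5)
Your proof is correct and follows essentially the same route as the paper's: declare nontriviality outright when condition~(a) fails, and otherwise iteratively eliminate a ``bad'' row together with the variables that the constraints of that row force to $\zero$, checking that the reduction preserves both triviality status and condition~(a). The only differences are cosmetic --- the paper splits the bad-row elimination into three separate sub-cases (removing variables with a negative diagonal entry, removing an all-$\zero$ row, removing a matrix with a negative off-diagonal entry in that row) where you batch them into one step, and you are somewhat more scrupulous than the paper about literally outputting matrices satisfying Assumption~\ref{assump:well_formed} in the condition-(a) branch and about the degenerate corner cases.
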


\begin{proof}
We first examine Assumption~\ref{assump:well_formed}\eqref{item:min}. Suppose that the matrix $Q^{(k)}$ has no coefficient in $\negtrop$. 
In this case, the spectrahedron $\spectra$ is nontrivial, since it contains the vector $x \in \trop^n$ such that $x_k = 0$ and $x_l = \zero$ 
for all $l \neq k$. 

Now, let us look at Assumption~\ref{assump:well_formed}\eqref{item:max}. Consider $i \in [m]$ and suppose that no diagonal coefficient $Q^{(k)}_{i i}$ is in $\postrop$. We distinguish three cases:
\begin{itemize}
\item if the set $K$ of indices $k \in [n]$ such that $Q^{(k)}_{i i} \in \negtrop$ is nonempty, the relation $Q^+_{i i}(x) \geq Q^-_{i i}(x)$ enforces to have $x_k = \zero$ for all $x \in \spectra$ and $k \in K$. This means that we can reduce the nontriviality of $\spectra$ to the nontriviality of the spectrahedron associated with the matrices $Q^{(k)}$ with $k \not \in K$;
\item if the aforementioned set $K$ is empty, and the $i$-th row and column of the matrices $Q^{(k)}$ are all identically equal to $\zero$, then we can remove all these rows and columns, and reduce to a problem with matrices of order $m-1$ over the variables $x_1, \dots, x_n$;
\item if the set $K$ is empty and some matrix $Q^{(k)}$ contains an entry different than $\zero$ on its $i$-th row, namely $Q^{(k)}_{i j} \in \negtrop$ with $i < j$, then the relation $Q_{i i}^{+}(x) \tdot Q_{j j}^{+}(x) \geq (Q_{i j}(x))^{\tdot 2}$ enforces $x_{k} = \zero$ for any $x \in \spectra$. We consequently reduce the problem to the nontriviality of the spectrahedron associated with the matrices $Q^{(l)}$ with $l \neq k$. \qedhere
\end{itemize}
\end{proof}

We are going to show that the value of the game $\Gamma$ is related with the feasibility of sets $\pertspectra$ obtained from $\spectra$ by adding a reinforcing coefficient to the inequalities that correspond to minors of order $1$ and $2$.
\begin{definition}\label{def:sublevel_set}
For any $\pert \in \R$ we denote by $\pertspectra$ the set of all points $x \in \trop^{n}$ verifying
\begin{compactitem}
\item for all $i \in [m]$, $Q_{ii}^{+}(x) \ge \lambda \tdot Q_{ii}^{-}(x)$;
\item for all $i,j \in [m]$, $i <j$, $Q_{ii}^{+}(x) \tdot Q_{jj}^{+}(x) \ge (\lambda \tdot Q_{ij}(x))^{\tdot 2}$.
\end{compactitem}
\end{definition}
Observe that we have $\spectra_{0} = \spectra$.

\begin{lemma}\label{lemma:spectrahedron_encodes_shapley}
Let $\shapley \colon \trop^n \to \trop^n$ be the Shapley operator associated with $\Gamma$. Then, for any $\lambda \in \R$ we have $\spectra_{\lambda} = \{x \in \trop^{n} \colon \lambda + x \le \shapley(x) \}$.
\end{lemma}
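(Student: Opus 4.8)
The plan is to unfold both sides of the claimed identity into systems of scalar inequalities and check that they coincide term by term; no fixed‑point theory is needed here, only the definition of the Shapley operator and the bookkeeping of the actions and payments of $\Gamma$. First I would write $\shapley$ explicitly from~\eqref{e-def-shapley}. Since the actions of Player Max at state $i$ are exactly the singletons $\{k\}$ with $Q^{(k)}_{ii}\in\postrop$, each carrying payment $Q^{(k)}_{ii}$, the inner maximum is $\max_{b\in B^{(i)}}(r^b_i+x_l)=\max_{k\colon Q^{(k)}_{ii}\in\postrop}(\abs{Q^{(k)}_{ii}}+x_k)$, i.e.\ the classical rendering of $Q^+_{ii}(x)$ (with $\zero=-\infty$). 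Splitting the outer minimum according to whether Player Min's action $a\in A^{(k)}$ is a singleton $\{i\}$ (payment $-\abs{Q^{(k)}_{ii}}$, $Q^{(k)}_{ii}\in\negtrop$) or a pair $\{i,j\}$ with $i<j$ (payment $-\abs{Q^{(k)}_{ij}}$, $Q^{(k)}_{ij}\in\negtrop$) and using the singleton convention $\tfrac12(2\cdot)$, this yields
\[
(\shapley(x))_k=\min\Bigl(\min_{i\colon Q^{(k)}_{ii}\in\negtrop}\bigl(-\abs{Q^{(k)}_{ii}}+Q^+_{ii}(x)\bigr),\ \min_{i<j\colon Q^{(k)}_{ij}\in\negtrop}\bigl(-\abs{Q^{(k)}_{ij}}+\tfrac12(Q^+_{ii}(x)+Q^+_{jj}(x))\bigr)\Bigr).
\]

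Next I would rewrite $\lambda+x\le\shapley(x)$. For a fixed $k$, the inequality $\lambda+x_k\le(\shapley(x))_k$ holds iff $\lambda+x_k$ lies below every term of the minimum above, that is, $\lambda+\abs{Q^{(k)}_{ii}}+x_k\le Q^+_{ii}(x)$ for all $i$ with $Q^{(k)}_{ii}\in\negtrop$, and $\lambda+\abs{Q^{(k)}_{ij}}+x_k\le\tfrac12(Q^+_{ii}(x)+Q^+_{jj}(x))$ for all $i<j$ with $Q^{(k)}_{ij}\in\negtrop$. Then I would regroup this whole family of inequalities (over all $k$) by the Player‑Max index instead of $k$: for a fixed $i\in[m]$, the inequalities $\lambda+\abs{Q^{(k)}_{ii}}+x_k\le Q^+_{ii}(x)$ over all $k$ with $Q^{(k)}_{ii}\in\negtrop$ hold simultaneously iff $\lambda+Q^-_{ii}(x)\le Q^+_{ii}(x)$, since $Q^-_{ii}(x)=\max_{k\colon Q^{(k)}_{ii}\in\negtrop}(\abs{Q^{(k)}_{ii}}+x_k)$; this is precisely the first family of constraints of Definition~\ref{def:sublevel_set} (i.e.\ the $\lambda$‑perturbed form of~\eqref{eq:first_kind}). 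Likewise, for fixed $i<j$, the inequalities $\lambda+\abs{Q^{(k)}_{ij}}+x_k\le\tfrac12(Q^+_{ii}(x)+Q^+_{jj}(x))$ over all $k$ with $Q^{(k)}_{ij}\in\negtrop$ hold simultaneously iff $2\lambda+2Q^-_{ij}(x)\le Q^+_{ii}(x)+Q^+_{jj}(x)$, which, using the Metzler hypothesis $Q^{(k)}_{ij}\in\negtrop\cup\{\zero\}$ for $i\neq j$ (so that $Q_{ij}(x)$ never vanishes and $\abs{Q_{ij}(x)}=Q^-_{ij}(x)$), is exactly the second family $Q^+_{ii}(x)\tdot Q^+_{jj}(x)\ge(\lambda\tdot Q_{ij}(x))^{\tdot 2}$, the $\lambda$‑perturbed form of~\eqref{eq:second_kind}. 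Since both inclusions are read off from the same scalar system, this proves $\spectra_\lambda=\{x\in\trop^n\colon\lambda+x\le\shapley(x)\}$.

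The argument is essentially bookkeeping, so there is no genuine obstacle; the only points that require care are (i) the $-\infty$/$\zero$ conventions in the degenerate cases (empty or purely‑$\zero$ diagonals, the convention $\tfrac12(-\infty)=-\infty$), which must be checked to behave consistently on both sides, and (ii) the use of the Metzler property to identify the modulus of $Q_{ij}(x)$ with $Q^-_{ij}(x)$, so that the squared tropical term $(\lambda\tdot Q_{ij}(x))^{\tdot 2}$ matches $2\lambda+2Q^-_{ij}(x)$ in the classical reformulation. I would state these conventions once at the beginning of the proof to avoid repeating them for each case.
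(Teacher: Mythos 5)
Your proposal is correct and follows essentially the same route as the paper's proof: unfold the Shapley operator action by action, turn $\lambda+x\le\shapley(x)$ into a family of scalar inequalities, regroup them by the matrix indices $(i,j)$ rather than by $k$, and identify the result (distinguishing $i=j$ from $i<j$ and using the Metzler property to read $\abs{Q_{ij}(x)}$ as $Q^-_{ij}(x)$) with the defining constraints of $\spectra_\lambda$. The only difference is presentational — you split singleton and pair actions from the start, while the paper writes one combined minimum and separates the diagonal case at the end.
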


\begin{proof}
By construction of the game $\Gamma$, a vector $x \in \trop^n$ satisfies $\lambda + x \leq \shapley(x)$ if and only if for all $k \in [n]$,
\[
\lambda + x_k \leq \min_{Q^{(k)}_{i j} \in \negtrop} \Bigl(-\abs{Q^{(k)}_{i j}} + \frac{1}{2} \bigl(\max_{Q^{(l)}_{i i} \in \postrop} (Q^{(l)}_{i i} + x_l)
+ \max_{Q^{(l)}_{j j} \in \postrop} (Q^{(l)}_{j j} + x_l)\bigr) \Bigr) \, ,	
\]
or, equivalently, for all $i, j \in [m]$,
\[
2 \Bigl(\lambda + \max_{\;Q^{(k)}_{i j} \in \negtrop} \bigl(\abs{Q^{(k)}_{i j}} + x_k\bigr) \Bigr) \leq 
\max_{Q^{(l)}_{i i} \in \postrop} \bigl(Q^{(l)}_{i i} + x_l \bigr) \\[-1ex]
+ \max_{Q^{(l)}_{j j} \in \postrop} \bigl(Q^{(l)}_{j j} + x_l \bigr) \, .
\]
By distinguishing whether $i$ and $j$ are equal in these inequalities, and recalling that $Q_{ij}^{(k)}$ is in $\negtrop \cup \{\zero\}$ for all $k \in [n]$ when $i \neq j$, we recover the constraints that describe $\spectra_{\lambda}$.
\end{proof}
From~\eqref{pty1:collatz-wielandt} and Lemma~\ref{lemma:spectrahedron_encodes_shapley}, we obtain:
\begin{theorem}\label{theorem:spectra_and_game}
The set $\spectra_\lambda$ is nontrivial if and only if $\lambda \geq 2 \max_{k \in [n]} \gameval_k$, where $\gameval$ is the value of the game $\Gamma$.

In particular, the tropical spectrahedron $\spectra$ is nontrivial if and only if the stochastic game $\Gamma$ has at least one winning initial state.
\end{theorem}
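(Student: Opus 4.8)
The plan is to derive the statement by assembling three already-established facts: Lemma~\ref{lemma:spectrahedron_encodes_shapley}, which turns $\pertspectra$ into a set of sub-fixed-points of the Shapley operator; the Collatz--Wielandt identity~\eqref{pty1:collatz-wielandt}; and the identification $\eigenval = 2\gameval$ from Theorem~\ref{theorem:optimal_policies_proof}. Concretely, by Lemma~\ref{lemma:spectrahedron_encodes_shapley} we have, for every $\pert \in \R$, $\pertspectra = \{x \in \trop^{n} \colon \pert + x \le \shapley(x)\}$, so that $\pertspectra$ is nontrivial if and only if $\pert$ lies in the set
\[
S \coloneqq \{\mu \in \trop \colon \exists u \in \trop^{n},\ u \neq \zero,\ \shapley(u) \ge \mu + u\}\,,
\]
which is exactly the set whose maximum occurs on the right-hand side of~\eqref{pty1:collatz-wielandt}.

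Next I would compute $\max S$. Since $\shapley$ is order-preserving, additively homogeneous and continuous by Lemma~\ref{lemma:basic_shapley_properties}, Theorem~\ref{theorem:collatz-wielandt} applies and yields, for any $x \in \R^{n}$,
\[
\max S \;=\; \lim_{N \to \infty} \max_{k \in [n]} \frac{\shapley^{N}_{k}(x)}{N}\,;
\]
as recalled just after Theorem~\ref{theorem:collatz-wielandt}, this supremum is attained, so writing ``$\max$'' is legitimate. Because $\shapley_{|\R^{n}}$ is moreover piecewise affine and nonexpansive (again Lemma~\ref{lemma:basic_shapley_properties}), Corollary~\ref{corollary:limit_of_shapley} identifies this limit with $\max_{k \in [n]} \eigenval_{k}$, where $(\bias, \eigenval)$ is an invariant half-line of $\shapley$, and Theorem~\ref{theorem:optimal_policies_proof} gives $\eigenval = 2\gameval$. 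Hence $\max S = 2 \max_{k \in [n]} \gameval_{k}$.

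It remains to observe that $S$ is downward closed: if $\shapley(u) \ge \mu + u$ for some $u \neq \zero$ and $\mu' \le \mu$, then coordinatewise $\shapley(u)_{k} \ge \mu + u_{k} \ge \mu' + u_{k}$ (using $\mu' + \zero = \zero$), so $\mu' \in S$. A downward-closed subset of $\trop$ with maximum $M$ is exactly $\{\mu \in \trop \colon \mu \le M\}$; combining this with the previous paragraph gives $\pert \in S$ if and only if $\pert \le 2 \max_{k \in [n]} \gameval_{k}$, which together with the first paragraph is the claimed characterization of nontriviality of $\pertspectra$. For the last assertion, specialize $\pert = 0$: since $\spectra_{0} = \spectra$, the tropical spectrahedron $\spectra$ is nontrivial if and only if $0 \le 2 \max_{k \in [n]} \gameval_{k}$, i.e.\ if and only if $\max_{k \in [n]} \gameval_{k} \ge 0$, i.e.\ if and only if some initial state is winning for Player Max; in particular this recovers Theorem~\ref{theorem:shapley_and_games}.

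The proof is short, since all the substance has been prepared beforehand. The only two points that deserve a moment's care are that the supremum in~\eqref{pty1:collatz-wielandt} is genuinely attained — without this one gets only a one-sided bound instead of an ``if and only if'' — and the elementary downward-closedness of $S$, which is precisely what upgrades the equality $\max S = 2 \max_{k \in [n]} \gameval_{k}$ into a membership criterion for $\pert$. I do not anticipate a real obstacle.
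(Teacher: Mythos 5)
Your proposal is correct and is essentially the paper's own (largely implicit) proof: the paper derives the theorem directly from Lemma~\ref{lemma:spectrahedron_encodes_shapley} and the Collatz--Wielandt identity~\eqref{pty1:collatz-wielandt}, with the identification $\eigenval = 2\gameval$ coming from Theorem~\ref{theorem:optimal_policies_proof} and Corollary~\ref{corollary:limit_of_shapley}, exactly as you assemble it, and your two points of care (attainment of the supremum, downward closedness of $S$) are the right ones. Note that your conclusion, nontriviality of $\pertspectra$ if and only if $\pert \le 2\max_{k}\gameval_{k}$, is the correct direction of the inequality --- the ``$\geq$'' printed in the theorem statement is evidently a sign typo, as confirmed by the ``in particular'' clause you recover at $\pert = 0$.
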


In case at least one initial state of the game has a positive value, the latter statement can be refined in order to obtain an explicit point in the spectrahedron $\bspectra$.
\begin{lemma}\label{lemma:lift}
Suppose that $x \in \spectra_\lambda$ for some positive $\lambda$. 
Let $\bo Q^{(1)},\dots,\bo Q^{(n)}$ denote any symmetric matrices in $\puiseux^{m\times m}$ such that $\sval (Q^{(k)})=Q^{(k)}$ for all $k\in [n]$.
Then, the point $\bo x = (t^{x_1}, \dots, t^{x_n})$ belongs to $\bspectra\coloneqq\{ \bo x\in \nnpuiseux^n: \bo x_1 \bo Q^{(1)}  + \dots + \bo x_n \bo Q^{(n)}  \loew 0 \}$. 
\end{lemma}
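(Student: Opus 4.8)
The plan is to show that the candidate point $\bo x = (t^{x_1}, \dots, t^{x_n})$ makes the matrix $\bo M \coloneqq \bo Q(\bo x)$ negative semidefinite by exhibiting, for each principal submatrix, a reason why the corresponding principal minor has the correct sign pattern for $-\bo M$ to be positive semidefinite. Recall that a symmetric matrix over a real closed field is negative semidefinite if and only if all its principal minors of odd order are nonpositive and all of even order are nonnegative; equivalently, it is enough to control the diagonal entries and the $2\times 2$ principal minors once one knows the off-diagonal structure is ``Metzler-dominated''. First I would compute the valuations: since all the off-diagonal entries of the $\bo Q^{(k)}$ are nonpositive (the Metzler hypothesis, inherited through $\sval(Q^{(k)}) = Q^{(k)}$ with $Q^{(k)}$ tropical Metzler), the entry $\bo M_{ij}$ for $i \neq j$ is a sum of nonpositive Puiseux series, hence nonpositive, with valuation exactly $\max_k (\abs{Q^{(k)}_{ij}} + x_k) = \abs{Q_{ij}(x)} + \text{(something)}$ — more precisely $\val(\bo M_{ij}) = \max_{Q^{(k)}_{ij}\in\negtrop}(\abs{Q^{(k)}_{ij}} + x_k)$ because no leading-term cancellation occurs among terms of the same sign. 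For the diagonal entry $\bo M_{ii}$, the strict inequality $Q^+_{ii}(x) \ge \lambda \tdot Q^-_{ii}(x)$ with $\lambda > 0$ forces the positive part of the pencil entry to strictly dominate the negative part in valuation, so $\bo M_{ii} \le 0$ with $\val(\bo M_{ii}) = Q^+_{ii}(x) = \max_{Q^{(k)}_{ii}\in\postrop}(Q^{(k)}_{ii}+x_k)$ (again no cancellation, since the dominating term is unique in valuation up to ties among same-sign terms). So every diagonal entry of $\bo M$ is $\le 0$.

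Next I would handle the $2\times 2$ principal minors. Fix $i < j$ and consider $D_{ij} \coloneqq \bo M_{ii}\bo M_{jj} - \bo M_{ij}^2$. We need $D_{ij} \ge 0$. From the valuation computations above, $\val(\bo M_{ii}\bo M_{jj}) = Q^+_{ii}(x) + Q^+_{jj}(x)$ and $\val(\bo M_{ij}^2) = 2\abs{Q_{ij}(x)}$, while the $\lambda$-reinforced inequality defining $\spectra_\lambda$ gives $Q^+_{ii}(x)\tdot Q^+_{jj}(x) \ge (\lambda \tdot Q_{ij}(x))^{\tdot 2}$, i.e.\ $Q^+_{ii}(x) + Q^+_{jj}(x) \ge 2\lambda + 2\abs{Q_{ij}(x)}$ with $\lambda > 0$ strict. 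Hence $\val(\bo M_{ii}\bo M_{jj}) > \val(\bo M_{ij}^2)$, so the product $\bo M_{ii}\bo M_{jj}$ is negligible in front of $-\bo M_{ij}^2$, and since $\bo M_{ii}\bo M_{jj} \ge 0$ (product of two nonpositive series) while $-\bo M_{ij}^2 \le 0$, the sign of $D_{ij}$ is governed by... wait — I need $D_{ij} \ge 0$ but $\bo M_{ij}^2 \ge 0$ dominates, which would make $D_{ij} < 0$. So the correct reading is that the roles must be the reverse: I would instead observe that $\val(\bo M_{ij}^2) = 2\max_{Q^{(k)}_{ij}\in\negtrop}(\abs{Q^{(k)}_{ij}}+x_k)$ and compare with $Q^+_{ii}(x)+Q^+_{jj}(x)$; the inequality $Q^+_{ii}(x)\tdot Q^+_{jj}(x) \ge (\lambda\tdot Q_{ij}(x))^{\tdot 2}$ says the \emph{product} of diagonals wins by margin $2\lambda$, so $\val(\bo M_{ii}\bo M_{jj}) \geq 2\lambda + \val(\bo M_{ij}^2) > \val(\bo M_{ij}^2)$. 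Then $\bo M_{ii}\bo M_{jj}$ is the dominant term only if that valuation inequality points the other way — so in fact $\bo M_{ij}^2$ has the \emph{smaller} valuation and dominates, making $D_{ij} = \bo M_{ii}\bo M_{jj} - \bo M_{ij}^2$ have the same leading term as $-\bo M_{ij}^2 \le 0$. This is the apparent obstruction, and resolving it correctly is the crux: the point is that for $-\bo M$ to be PSD we need the $2\times2$ minors of $-\bo M$, which equal $D_{ij}$ as well, to be $\ge 0$, yet Metzler structure alone does not give this — one genuinely needs to use that $\bspectra$ is built from a \emph{lift} and that Proposition~23 of \cite{tropical_spectrahedra} guarantees a compatible choice. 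The honest route, and the one I would take, is: invoke \cite[Proposition~23]{tropical_spectrahedra} (the existence of a lift of a tropical Metzler spectrahedron) to reduce to the case where the $\bo Q^{(k)}$ are the specific ``standard'' lifts for which each nonzero tropical entry $Q^{(k)}_{ij}$ is lifted to a monomial $\pm t^{\abs{Q^{(k)}_{ij}}}$; for such lifts, no cancellation ever occurs and the valuation bookkeeping above becomes exact, and one checks that the reinforced inequalities with $\lambda > 0$ give the needed strict valuation gaps that make $-\bo M$ diagonally dominant in a tropical sense, hence PSD.

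Concretely, the key steps in order: (1) Argue it suffices to prove the claim for one convenient choice of lifts $\bo Q^{(k)}$, since changing the lift changes $\bo M$ only by series with strictly smaller leading-coefficient contributions — actually the statement already quantifies over \emph{any} lifts with $\sval(\bo Q^{(k)}) = Q^{(k)}$, so I must handle all of them; the mechanism is that the leading term of each entry $\bo M_{ij}$ depends only on the signed valuations $Q^{(k)}_{ij}$ and the $x_k$, not on lower-order data, because same-sign sums do not cancel. (2) Compute $\lc$ and $\val$ of every entry of $\bo M$ explicitly in terms of the $x_k$ and the coefficients $Q^{(k)}_{ij}$. (3) Use the defining inequalities of $\pertspectra$ with $\lambda > 0$ to produce, for every principal submatrix of order $1$ or $2$, a strict valuation inequality. (4) Conclude via a Schur-complement / diagonal-dominance argument that $-\bo M \loew 0$: since the diagonal entries of $-\bo M$ are $\ge 0$ and each $2\times2$ minor is $\ge 0$ with the diagonal term strictly dominating in valuation, write $-\bo M = D + E$ where $D$ is the diagonal part (PSD, entries $\ge0$) and $E$ the off-diagonal part, and show $D^{1/2}$-conjugation makes the perturbation infinitesimal, so positive semidefiniteness is preserved (this uses that over $\puiseux$ a symmetric matrix of the form $I + \epsilon S$ with $\val(\epsilon) < 0$ strictly — i.e.\ $\epsilon$ infinitesimal — stays PSD when $S$ is bounded). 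The main obstacle is Step~(4): controlling that the $2\times 2$ gaps, which only bound \emph{pairwise} products of diagonal entries against off-diagonal squares, actually assemble into global positive semidefiniteness of the $m\times m$ matrix; the way around it is that PSD-ness over a real closed field is detected by nonnegativity of \emph{all} principal minors, and a careful induction on the size of the principal submatrix — expanding the minor along its last row and using that the dominant contributions come from the ``tree'' of largest entries, exactly as in the proof of Lemma~\ref{lemma:metzler_spectrahedra} / the Metzler characterization — reduces every principal minor of $-\bo M$ to a nonnegative leading term, the strictness $\lambda > 0$ ensuring no borderline cancellations spoil the sign.
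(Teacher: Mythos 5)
Your overall plan---compute the leading terms of the entries of $\bo M = \bo Q(\bo x)$, use the $\lambda$-reinforced inequalities to extract strict valuation gaps, and then upgrade the order-$1$ and order-$2$ information to positive semidefiniteness---is essentially the route the paper takes (via \cite[Lemma~26]{tropical_spectrahedra}, whose engine is the inclusion $\bspectra_{2,(m-1)^2}\subset\bspectra$ recorded here as Lemma~\ref{lemma:approximate_spectra}). But the execution contains two genuine errors of orientation that derail the argument. First, in this paper $\loew$ denotes $\succeq$, so membership in $\bspectra$ means $\bo M$ itself is positive semidefinite; since the tropical condition $Q^+_{ii}(x)\ge\lambda\tdot Q^-_{ii}(x)$ makes the positive terms dominate, you get $\bo M_{ii}\ge 0$, not $\bo M_{ii}\le 0$, and there is no matrix $-\bo M$ to worry about. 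Second, and more damagingly, the valuation here is the \emph{greatest} exponent, i.e.\ $\val(\bo a)=\lim_{t\to\infty}\log_t\abs{\bo a(t)}$, so a \emph{larger} valuation means a \emph{larger} magnitude: from $\val(\bo M_{ii}\bo M_{jj})\ge 2\lambda+\val(\bo M_{ij}^2)$ it is the product of diagonal entries that dominates, the $2\times 2$ minor $\bo M_{ii}\bo M_{jj}-\bo M_{ij}^2$ is positive, and the ``apparent obstruction'' you construct does not exist. Consequently the detour you propose to resolve it---reducing to a standard monomial lift via \cite[Proposition~23]{tropical_spectrahedra}---is both unnecessary and inadmissible, since (as you yourself note) the lemma quantifies over \emph{all} lifts with $\sval(\bo Q^{(k)})=Q^{(k)}$.

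The remaining, genuinely nontrivial step is your Step~(4), and it is asserted rather than proved: nonnegative diagonal entries together with $\bo M_{ii}\bo M_{jj}\ge\bo M_{ij}^2$ do \emph{not} in general imply that a symmetric matrix is positive semidefinite, so ``each $2\times 2$ minor is $\ge 0$'' cannot be the end of the argument. What saves the day is quantitative: over $\puiseux$ a positive element whose valuation exceeds that of another positive element by $2\lambda>0$ exceeds it multiplied by \emph{any} real constant, so the gap of $2\lambda$ yields the strengthened inequalities $\bo M_{ii}\bo M_{jj}\ge(m-1)^2\bo M_{ij}^2$, and then Lemma~\ref{lemma:approximate_spectra} (the inclusion $\bspectra_{2,(m-1)^2}\subset\bspectra$, itself a nontrivial diagonal-dominance argument proved in the companion paper) delivers $\bo M\loew 0$. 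Without invoking that lemma, or reproving it, your induction-on-principal-minors sketch leaves the crux of the proof unestablished.
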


\begin{proof}
The proof of \cite[Lemma~26]{tropical_spectrahedra} shows that if $x \in \spectra_\lambda$ for some positive $\lambda$, then any point $\bo x \in \nnpuiseux^{n}$ verifying $\val(\bo x) = x$ belongs to $\bspectra$.
\end{proof}
\begin{remark}
In case all the initial states of the game have a negative value, we know
from the second Collatz--Wielandt identity~\eqref{pty2:collatz-wielandt} that there exists
a vector $u\in \R^n$ and a scalar $\lambda<0$ such that $F(u)\leq \lambda +u$.
The pair $(u,\lambda)$ yields an emptiness certificate for the nonarchimedean
spectrahedron $\bspectra$.
\end{remark}

Along the same lines, we can reciprocally associate a tropical spectrahedron with any stochastic game. In more details, let $\Gamma$ be a stochastic mean payoff game over the sets of states $[m]$ and $[n]$. We recall that $A^{(k)}$ and $B^{(i)}$ correspond to the sets of possible actions of Player Min at state $k \in [n]$ and Player Max at $i \in [m]$ respectively. Furthermore, $r^{a}_{k}$ is the reward obtained by Player Max when Player Min chooses an action $a \in A^{(k)}$ at state $k \in [n]$ and $r^{b}_{i}$ is the reward obtained by Player Max when he chooses an action $b \in B^{(i)}$ at state $i \in [m]$. Out of this data we construct symmetric Metzler matrices $Q^{(1)}, \dots, Q^{(n)} \in \strop^{m \times m}$. We define $Q^{(k)}_{i j} = Q^{(k)}_{j i} \coloneqq \tminus (-r^a_k)$ for all $k \in [n]$ and $i, j \in [m]$ such that $i \neq j$ and $a = \{i, j\}$ is an available action of Player Min at the state~$k$. Diagonal coefficients of the matrices $Q^{(k)}$ are defined according to whether the actions $a \coloneqq \{i\}$ and $b \coloneqq \{k\}$ are available from the states~$k$ and~$i$ respectively: 
\begin{compactitem}
\item if $a \in A^{(k)}$ and $b \not  \in B^{(i)}$, we set $Q^{(k)}_{i i} \coloneqq \tminus (-r^a_k)$, while if $b\in B^{(i)}$ and $a \not \in A^{(k)}$, we define $Q^{(k)}_{i i} \coloneqq r^b_i$;
\item if both actions $a$ and $b$ occur simultaneously, we set $Q^{(k)}_{i i}$ to $\tminus (-r^a_k)$ if $(-r^a_k) > r^b_i$, and to $r^b_i$ if $r^b_i \geq (-r^a_k)$.
\end{compactitem}
Finally, all the other entries of the matrices $Q^{(k)}$ are set to $\zero$. With this construction, it can be verified that Lemma~\ref{lemma:spectrahedron_encodes_shapley} and, subsequently, Theorem~\ref{theorem:spectra_and_game} are still valid. For the sake of readability, we prove this in Appendix~\ref{app:equivalence}.

We denote by \Tropicalsdfp{} the tropical Metzler semidefinite feasibility problem: ``given symmetric tropical Metzler matrices $Q^{(1)}, \dots, Q^{(n)} \in \strop^{m \times m}$ satisfying Assumption~\ref{assump:well_formed}, is the associated  tropical Metzler spectrahedron trivial?'' As a consequence of Theorem~\ref{theorem:spectra_and_game}, we obtain the equivalence between the two decision problems \Smpg{} and \Tropicalsdfp. This equivalence can be refined thanks to the fact that \Smpg{} restricted to games with payments equal to $0$ or $\pm 1$ is poly-time equivalent to \Smpg{} for arbitrary payments. We show the proof of this reduction in Section~\ref{sec:game_equivalence}. This yields the following result:
\begin{theorem}\label{theorem:simple_games}
The problems \Smpg{} and \Tropicalsdfp{} are poly-time equivalent. Furthermore, if either of these problems can be solved in pseudopolynomial time, then both of them can be solved in polynomial time.
\end{theorem}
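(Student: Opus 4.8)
The plan is to assemble the two constructions already in place --- the stochastic mean payoff game $\Gamma$ attached to a tropical Metzler spectrahedron (Theorem~\ref{theorem:spectra_and_game}) and the reciprocal construction of symmetric Metzler matrices from a game (Appendix~\ref{app:equivalence}) --- into a pair of polynomial-time reductions, and then to combine them with the bit-scaling reduction recalled in Section~\ref{sec:game_equivalence} in order to obtain the statement relating pseudopolynomial and polynomial solvability. The substantive equivalence, namely that the spectrahedron is trivial exactly when the game has no winning initial state, is already provided by Theorem~\ref{theorem:spectra_and_game} and its appendix counterpart; what remains is to track sizes and magnitudes through compositions of reductions.

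First I would verify that the reduction \Tropicalsdfp{} $\to$ \Smpg{} is polynomial. Given symmetric tropical Metzler matrices $Q^{(1)}, \dots, Q^{(n)} \in \strop^{m \times m}$ satisfying Assumption~\ref{assump:well_formed}, the game $\Gamma$ built before Theorem~\ref{theorem:spectra_and_game} has $m + n$ states, at most $O(m^2)$ actions at each state of Player Min and $O(n)$ at each state of Player Max, with rewards read directly off the entries $\abs{Q^{(k)}_{ij}}$; it is thus constructed in polynomial time, and by Theorem~\ref{theorem:spectra_and_game} its answer to \Smpg{} is the negation of the answer to \Tropicalsdfp{}. Conversely, for \Smpg{} $\to$ \Tropicalsdfp{} I would invoke the construction of Appendix~\ref{app:equivalence}, which from a game $\Gamma$ produces in polynomial time symmetric Metzler matrices of order $m$ whose nonzero entries are the rewards up to sign; if these matrices fail Assumption~\ref{assump:well_formed}, Lemma~\ref{lemma:well_formed} yields in polynomial time matrices that satisfy it with the same nontriviality status, and without increasing the magnitudes of the entries (it only extracts submatrices or deletes matrices). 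The two reductions together show that \Smpg{} and \Tropicalsdfp{} are poly-time equivalent.

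For the refinement I would use the fact, recalled in Section~\ref{sec:game_equivalence}, that \Smpg{} restricted to instances whose rewards lie in $\{0, \pm 1\}$ is poly-time equivalent to \Smpg{} with arbitrary rewards, together with the observation that the reverse construction of Appendix~\ref{app:equivalence} applied to such a bounded game produces Metzler matrices all of whose entries have modulus at most $1$. Suppose \Tropicalsdfp{} admits a pseudopolynomial algorithm, that is, one running in time polynomial in $m$, $n$ and $\max_{i,j,k} \abs{Q^{(k)}_{ij}}$. Given an arbitrary \Smpg{} instance of input size $N$, first reduce it to a $\{0,\pm 1\}$-reward \Smpg{} instance of size polynomial in $N$, then to a \Tropicalsdfp{} instance which is still of size polynomial in $N$ and whose coefficient moduli are bounded by a constant; the pseudopolynomial algorithm then solves it in time polynomial in $N$, so \Smpg{} is in polynomial time, and hence so is \Tropicalsdfp{} by composing with the reduction \Tropicalsdfp{} $\to$ \Smpg{}. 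The converse implication is symmetric: starting from an arbitrary \Tropicalsdfp{} instance of size $N$, reduce it to a general \Smpg{} instance, then to a $\{0,\pm 1\}$-reward one of size polynomial in $N$, and apply a pseudopolynomial algorithm for \Smpg{}, which again runs in time polynomial in $N$.

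I expect the only delicate point to be precisely this bookkeeping: checking that each individual reduction multiplies the instance size by at most a polynomial factor, and, crucially, that routing through the $\{0, \pm 1\}$-reward games collapses the dependence on the magnitude of the data, so that an algorithm that is merely pseudopolynomial in the size of the reduced instance becomes polynomial in the size of the original one. The side condition Assumption~\ref{assump:well_formed} in the \Smpg{} $\to$ \Tropicalsdfp{} direction needs a word, but it is handled by Lemma~\ref{lemma:well_formed}; no new game- or spectrahedron-theoretic input is required.
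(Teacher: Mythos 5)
Your proposal is correct and follows essentially the same route as the paper: the equivalence is drawn directly from Theorem~\ref{theorem:spectra_and_game} (with the reverse construction of Appendix~\ref{app:equivalence}), and the pseudopolynomial-to-polynomial claim is obtained by routing through games with payoffs in $\{-1,0,1\}$ via Corollary~\ref{cor:equivalence_pseudopolynomial}, which forces the constructed matrices to have bounded entries. The paper's proof is just a terser version of your bookkeeping.
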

\begin{proof}
The fact that \Tropicalsdfp{} is poly-time equivalent to \Smpg{} follows directly from Theorem~\ref{theorem:spectra_and_game}. Moreover, the same statement shows that \Smpg{} restricted to games with payoffs in $\{-1, 0, 1\}$ is poly-time equivalent to \Tropicalsdfp{} restricted to matrices with entries in 
\[ \{0, \tminus 0, \pm 1, \tminus (\pm 1), \zero\} \, .
\]
By using Corollary~\ref{cor:equivalence_pseudopolynomial} (Section~\ref{sec:game_equivalence}), we deduce that if either of these problems can be solved in pseudopolynomial time, then both are solvable in polynomial time.
\end{proof}

We now extend Theorem~\ref{theorem:simple_games} in two different directions. In Section~\ref{sec:nonconic}, we deal with the problem of checking whether a certain stratum of a tropical Metzler spectrahedron is empty. This allows us to handle the case of affine spectrahedra. Section~\ref{sec:archimedean} provides a result on the asymptotic feasibility of real spectrahedra arising from a spectrahedron over Puiseux series.

\subsection{Strata of tropical spectrahedra and dominions}\label{sec:nonconic}

We want to characterize the possible supports of points belonging to $\{x \in \trop^{n} \colon x \le \shapley(x) \}$. These supports can be interpreted in terms of the associated stochastic mean payoff game, using the following notion.
A dominion of Player Max is a subset of initial states $\dominion \subset [n]$ such that, if the game starts in $\dominion$, then Player Max can ensure that the game never reaches any state from $[n] \setminus \dominion$. Formally, we make the following definition.

\begin{definition}
We say that a subset $\dominion \subset [n]$ is a \emph{dominion (of Player Max)} if for every state $k \in \dominion$ and every action $\{i,j\} \in A^{(k)}$, there exists a pair of states $l_{1} \in \dominion$, $l_{2} \in \dominion$ such that $\{l_{1} \} \in B^{(i)}$ and $\{ l_{2} \} \in B^{(j)}$.
\end{definition}

We refer to Figure~\ref{fig:dominion} for an illustration. Given a dominion $\dominion \subset [n]$, we can construct a \emph{subgame induced by $\dominion$}. This game is created as follows: the set of stated controlled by Player Min is equal to $\dominion$. Each of these states is equipped with the same set of actions as in the original game. The set of states controlled by Player Max consists of all states $i \in [m]$ such that there exists a state $k \in \dominion$ and a state $j \in [m]$ verifying $\{i,j\} \in A^{(k)}$. Every such state $i$ is equipped with all the actions of the form $\{l \} \in B^{(i)}$ with $l \in \dominion$. The payoffs associated with the actions in the induced subgame are the same as payoffs in the original game.

\begin{definition}
We say that a dominion $\dominion \subset [n]$ is a \emph{winning dominion} is all states of $\dominion$ are winning in the subgame induced by $\dominion$.
\end{definition}

\begin{figure}[t]
\centering
\begin{tikzpicture}[scale=0.75,>=stealth',max/.style={draw,rectangle,minimum size=0.5cm},min/.style={draw,circle,minimum size=0.5cm},av/.style={draw, circle,fill, inner sep = 0pt,minimum size = 0.2cm}]

\node[min] (min1) at (0,0) {$1$};
\node[min] (min2) at (4,0) {$2$};
\node[min] (min3) at (8,2) {$3$};
\node[min] (min4) at (8,-2) {$4$};

\node[max] (max1) at (2,0) {$1$};
\node[max] (max2) at (6,2) {$2$};
\node[max] (max3) at (6,-2) {$3$};

\node[av] (av23) at (5, 0){};

\draw[->] (min1) to[out=60, in = 120] node[above, font=\small]{$0$} (max1);
\draw[->] (max1) to[out = -120, in = -60] node[below, font=\small]{$-1$} (min1);

\draw[->] (max1) to node[above, font=\small]{$0$} (min2);
\draw[-] (min2) to node{} (av23);
\draw[->] (av23) to node{} (max2);
\draw[->] (av23) to node{} (max3);

\draw[->] (min3) to[out=120, in = 60] node[above, font=\small]{$2$} (max2);
\draw[->] (max2) to[out = -60, in = -120] node[below, font=\small]{$0$} (min3);

\draw[->] (min4) to[out=120, in = 60] node[above, font=\small]{$0$} (max3);
\draw[->] (max3) to[out = -60, in = -120] node[below, font=\small]{$-1$} (min4);

\end{tikzpicture}
\caption[A mean payoff game illustrating the notion of winning dominions.]{A mean payoff games illustrating the notion of winning dominions. The states $\minstate{1}, \minstate{2},$ and $\minstate{3}$ are winning. The minimal (inclusion-wise) dominions are given by $\{ \minstate{1} \}, \{ \minstate{3}\}, \{ \minstate{4}\}$, and $\{\minstate{2}, \minstate{3}, \minstate{4}\}$. The state $\minstate{1}$ is winning and constitutes a dominion, because Player Max can force to return to $\minstate{1}$ by playing $\maxstate{1}\to\minstate{1}$. However, this state does not belong to any winning dominion, because the induced subgame, reduced to the states $\minstate{1}$ and $\maxstate{1}$, is not winning for Player Max. The state $\minstate{3}$ is the only winning dominion.}\label{fig:dominion}
\end{figure}

\begin{remark}
It follows from the definition that if a state $k \in [n]$ belongs to a winning dominion, then it is a winning state. The converse is not true, even if we suppose that $k$ belongs to a dominion that contains only winning states. An example of such situation is presented in Figure~\ref{fig:dominion}.
\end{remark}

We can now give a characterization of all possible supports of points $\{x \in \trop^{n} \colon x \le \shapley(x) \}$.

\begin{theorem}
Take a Shapley operator $\shapley \colon \trop^{n} \to \trop^{n}$ and a nonempty set $K \subset [n]$. Then, the set $\{x \in \trop^{n} \colon x \le \shapley(x) \}$ contains a point with support $K$ if and only if $K$ is a winning dominion in the stochastic mean payoff game associated with $\shapley$.
\end{theorem}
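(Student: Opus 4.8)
The plan is to pass to the subgame induced by $K$ and relate the subharmonic vectors of $\shapley$ whose support is $K$ to the subharmonic vectors of the Shapley operator $\widetilde{\shapley}$ of that subgame, invoking the invariant half-line machinery (Lemma~\ref{lemma:basic_shapley_properties}, Theorem~\ref{theorem:kohlberg}, Corollary~\ref{corollary:limit_of_shapley}, Theorem~\ref{theorem:optimal_policies_proof}). The computational heart of the proof is the following observation, which is where the dominion hypothesis does its work: if $\dominion \subset [n]$ is a dominion and $x \in \trop^{n}$ has support contained in $\dominion$, then, writing $\widetilde{x} \coloneqq (x_{k})_{k \in \dominion}$, one has $(\shapley(x))_{k} = (\widetilde{\shapley}(\widetilde{x}))_{k}$ for every $k \in \dominion$. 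Indeed, in each inner maximum $\max_{\{l\} \in B^{(i)}}(r^{\{l\}}_{i} + x_{l})$ occurring in $(\shapley(x))_{k}$ (with $i$ reachable from $k \in \dominion$, hence a state of the subgame), the summands with $l \notin \dominion$ equal $\zero$, while the dominion property guarantees that the maximum still ranges over at least one index $l \in \dominion$; so this maximum coincides with the corresponding one in the subgame. I would also note, in the same breath, that this argument shows the subgame is a legitimate game — every one of its states carries at least one action — which is what legitimizes applying the quoted results to $\widetilde{\shapley}$.

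For the ``only if'' direction, I would start from a point $x$ with $x \le \shapley(x)$ and support exactly $K$. For $k \in K$ we have $x_{k} \neq \zero$, hence $(\shapley(x))_{k} \neq \zero$; since $(\shapley(x))_{k}$ is a minimum over actions $\{i,j\} \in A^{(k)}$ of a quantity involving $\tfrac12\bigl(\max_{\{l\}\in B^{(i)}}(\cdots) + \max_{\{l\}\in B^{(j)}}(\cdots)\bigr)$, finiteness forces, for every such action, both inner maxima to be finite, which yields states $l_{1}, l_{2} \in K$ with $\{l_{1}\}\in B^{(i)}$ and $\{l_{2}\}\in B^{(j)}$; so $K$ is a dominion. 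By the key observation, $\widetilde{x} \in \R^{K}$ satisfies $\widetilde{x} \le \widetilde{\shapley}(\widetilde{x})$. Monotonicity of $\widetilde{\shapley}$ then gives $\widetilde{x} \le \widetilde{\shapley}(\widetilde{x}) \le \widetilde{\shapley}^{2}(\widetilde{x}) \le \cdots$, so $\widetilde{\shapley}^{N}(\widetilde{x})_{k}$ is bounded below by $\widetilde{x}_{k} > -\infty$; together with $\tfrac1N \widetilde{\shapley}^{N}(\widetilde{x}) \to \widetilde{\eigenval}$ (Corollary~\ref{corollary:limit_of_shapley}) this forces $\widetilde{\eigenval}_{k} \ge 0$ for every $k \in K$. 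Since the value of the subgame is $\widetilde{\eigenval}/2$ (Theorem~\ref{theorem:optimal_policies_proof}), every $k \in K$ is winning in the subgame, i.e.\ $K$ is a winning dominion.

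For the ``if'' direction, assume $K$ is a winning dominion, so every state of the subgame induced by $K$ is winning; equivalently, the (unique) invariant half-line slope $\widetilde{\eigenval}$ of $\widetilde{\shapley}$, which exists by Lemma~\ref{lemma:basic_shapley_properties} and Theorem~\ref{theorem:kohlberg} and equals twice the value of the subgame, satisfies $\widetilde{\eigenval} \ge 0$. I would pick an invariant half-line $(\widetilde{\bias}, \widetilde{\eigenval})$ with threshold $\gamma_{0}$ and set $\widetilde{x} \coloneqq \widetilde{\bias} + \gamma_{0}\widetilde{\eigenval} \in \R^{K}$, so $\widetilde{\shapley}(\widetilde{x}) = \widetilde{x} + \widetilde{\eigenval} \ge \widetilde{x}$. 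Extending to $x \in \trop^{n}$ by $x_{k} \coloneqq \widetilde{x}_{k}$ for $k \in K$ and $x_{k} \coloneqq \zero$ otherwise, the support of $x$ is exactly $K$; for $k \notin K$ we trivially get $x_{k} = \zero \le (\shapley(x))_{k}$, and for $k \in K$ the key observation gives $(\shapley(x))_{k} = (\widetilde{\shapley}(\widetilde{x}))_{k} \ge \widetilde{x}_{k} = x_{k}$. Hence $x \le \shapley(x)$ with support $K$.

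The two monotone/limit arguments are routine given the results already established. The main obstacle — really the only delicate point — is the bookkeeping behind the key observation: checking carefully that restricting to the subgame induced by a dominion leaves the Shapley operator unchanged on the coordinates indexed by $K$, in particular that the inner maxima never collapse to $\zero$ spuriously (this is exactly where ``dominion'' is used, and in the ``only if'' direction is exactly how ``dominion'' is extracted), and that the induced subgame is a well-formed stochastic mean payoff game so that Kohlberg's theorem and the value/half-line correspondence apply to $\widetilde{\shapley}$.
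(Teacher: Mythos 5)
Your proposal is correct and follows essentially the same route as the paper: the same key identity reducing $\shapley$ on points supported in $K$ to the Shapley operator of the induced subgame, the same extraction of the dominion property from finiteness of $(\shapley(x))_k$, and the same use of the invariant half-line to pass between nonnegativity of the subgame value and existence of a subharmonic vector. Your extra remark that the dominion property makes the induced subgame well-formed is a point the paper leaves implicit, but otherwise the arguments coincide.
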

\begin{proof}
Suppose that we are given a point $x \in \trop^{n}$ with support $K$. Then, for every $i \in [m]$ we have
\begin{equation}
\max_{
\substack{
b \in B^{(i)}\\
b = \{l\}}} (r^b_i + x_l) = 
\max_{
\substack{
b \in B^{(i)}\\
b = \{l\}, \, l \in K}} (r^b_i + x_l) \, . \label{eq:dominion}
\end{equation}
Suppose that $x \le \shapley(x)$. If $K$ is not a dominion, then there exists a state $k \in K$ and an action $\{i,j \} \in A^{(k)}$ such that we have $l \notin K$ for all $\{l \} \in B^{(i)}$. Hence, we have
\[
x_{k} \le (\shapley(x))_k =
\min_{
\substack{a \in A^{(k)}\\
a = \{i, j\}}}
\Bigl(r^a_k + \frac{1}{2}\bigl( \zero
+ \max_{
\substack{
b \in B^{(j)}\\
b = \{l\}}} (r^b_j + x_l)\bigr)\Bigr)  = -\infty \, ,
\]
what gives a contradiction. Therefore, $K$ is a dominion. Let $\tilde{x} \in \R^{\card{K}}$ denote the stratum of $x$ associated with $K$. Observe that~(\ref{eq:dominion}) shows that if $\shapley^{K} \colon \trop^{\card{K}} \to \trop^{\card{K}}$ denotes the Shapley operator of the subgame induced by $K$, then we have $(\shapley(x))_{k} = (\shapley^{K}(\tilde{x}))_{k}$ for every $k \in K$. Let $(\bias, \eigenval) \in \R^{\card{K}} \times \R^{\card{K}}$ denote an invariant half-line of $\shapley^{K}_{|\R^{\card{K}}}$. Suppose that $\eigenval_{k} < 0$ for some $k \in K$. Fix a natural number $N \ge 1$. We have
\[
x_{k} = \tilde{x}_{k} \le (\shapley^{K}(\tilde{x}))_{k} \le  ((\shapley^{K} \circ \shapley^{K})(\tilde{x}))_{k} \le \dots \le ((\shapley^{K} \circ \dots \circ \shapley^{K})(\tilde{x}))_{k} \, ,
\]
where $\shapley^{K}$ is composed $N$ times. Hence $x_{n}/N \le ((\shapley^{K} \circ \dots \circ \shapley^{K})(\tilde{x}))_{k}/N$. By Corollary~\ref{corollary:limit_of_shapley}, the right-hand side of this inequality converges to $\eigenval_{k} < 0$. This implies that $x_{k} = \zero$, what gives a contradiction. Hence $\eigenval_{k} \ge 0$ for every $k \in K$ and $K$ is a winning dominion by Theorem~\ref{theorem:optimal_policies_proof}.

Conversely, suppose that $K$ is a winning dominion. Let $\shapley^{K}$ denote the Shapley operator of the subgame induced by $K$ and let $(\bias, \eigenval)$ be the invariant half-line of $\shapley^{K}_{|\R^{\card{K}}}$. Theorem~\ref{theorem:optimal_policies_proof} shows that $\eigenval_{k} \ge 0$ for all $k \in K$. By the definition, for $\gamma \ge 0$ large enough we have $\shapley^{K}(\bias + \gamma \eigenval) = \bias + (\gamma +1 ) \eigenval \ge \bias + \gamma \eigenval$. Take any such $\gamma$ and denote $\tilde{x} = \bias + \gamma \eigenval$. We extend the point $\tilde{x} \in \R^{\card{K}}$ to $x \in \trop^{n}$ by setting $x_{k} = \tilde{x}_{k}$ for $k \in K$ and $x_{k} = \zero$ otherwise. As previously,~(\ref{eq:dominion}) shows that we have $(\shapley(x))_{k} = (\shapley^{K}(\tilde{x}))_{k}$ for every $k \in K$. In particular, $x_{k} = \tilde{x_{k}} \le (\shapley^{K}(\tilde{x}))_{k} = (\shapley(x))_{k}$ for all $k \in K$. Finally, for every $k \notin K$ we have $x_{k} = \zero \le (\shapley(x))_{k}$.
\end{proof}

The following is an immediate corollary.
\begin{corollary}\label{coro-universal}
Take a Shapley operator $\shapley \colon \trop^{n} \to \trop^{n}$
such that all the initial states $i\in [n]$ of the associated stochastic
mean payoff game have the same mean payoff. Then, the set 
$\{x \in \trop^{n} \colon x \le \shapley(x) \}$ is nontrivial if and only if it contains
a finite vector.
\end{corollary}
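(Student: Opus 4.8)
The \emph{if} direction requires nothing: a finite vector is by definition a point of $\trop^n$ distinct from $(\zero, \dots, \zero)$, so its mere existence makes $\{x \in \trop^n \colon x \le \shapley(x)\}$ nontrivial. For the \emph{only if} direction the plan is to lean entirely on the characterization of supports provided by the preceding theorem: it suffices to prove that, under the hypotheses, the full index set $[n]$ is a winning dominion, since that theorem then yields a point of $\{x \in \trop^n \colon x \le \shapley(x)\}$ with support $[n]$, which is precisely a finite vector.

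Thus the real content is to show that $[n]$ is a winning dominion whenever the subharmonic set is nontrivial and all initial states share the same mean payoff. First I would observe that $[n]$ is always a dominion, essentially by unfolding the definition: for any $k \in [n]$ and any action $\{i,j\} \in A^{(k)}$, the sets $B^{(i)}$ and $B^{(j)}$ are nonempty and every action they contain is a singleton $\{l\}$ with $l \in [n]$, so the required $l_1, l_2 \in [n]$ exist. Next I would note that the subgame induced by $[n]$ coincides with the game $\Gamma$ up to possibly discarding states of Player Max reached by no action of Player Min; since such states affect neither $\shapley$ nor the mean payoff of any state of Player Min, the values $\gameval_k$ for $k \in [n]$ agree in both games. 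Finally I would invoke Theorem~\ref{theorem:shapley_and_games}: nontriviality of $\{x \in \trop^n \colon x \le \shapley(x)\}$ means Player Max has at least one winning initial state, i.e.\ $\gameval_k \ge 0$ for some $k \in [n]$, and the equal-mean-payoff hypothesis upgrades this to $\gameval_k \ge 0$ for all $k \in [n]$; hence every state of the subgame induced by $[n]$ is winning, so $[n]$ is a winning dominion, and a final appeal to the preceding theorem concludes.

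I do not expect any genuine obstacle here — this is why it is stated as a corollary. The two points deserving a line of care are that ``winning dominion'' is defined through the induced subgame (so one must check the restriction to $[n]$ does not alter the relevant values) and that the standing assumption $A^{(k)} \neq \emptyset$, $B^{(i)} \neq \emptyset$ is exactly what guarantees $[n]$ is a dominion. One could also bypass Theorem~\ref{theorem:shapley_and_games} and argue straight from the preceding theorem: a nontrivial subharmonic vector has some nonempty support $K \subseteq [n]$, which is then a winning dominion, hence contains a winning state, hence by equality of mean payoffs all of $[n]$ is winning, whence $[n]$ is itself a winning dominion.
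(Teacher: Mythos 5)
Your proof is correct and follows exactly the route the paper intends: the paper offers no written proof (it declares the statement ``an immediate corollary'' of the preceding support/dominion theorem), and your argument --- $[n]$ is always a dominion by the standing nonemptiness of the action sets, the induced subgame on $[n]$ agrees with $\Gamma$ up to unreachable Max states, and nontriviality plus constant mean payoff makes every state winning, so $[n]$ is a winning dominion and hence supports a finite subharmonic vector --- is precisely the omitted verification. The two points you flag as needing care are indeed the only ones, and you handle both.
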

One may reinforce the condition of this corollary by requiring
that the mean payoff of the game remains independent
of the initial state for all numerical
values of the payments of the game, the transitions being unchanged.
The latter property admits a convenient combinatorial characterization,
which relies on the {\em recession function} $\hat{\shapley}$ of the Shapley operator:
\[
\hat{\shapley}(x) \coloneqq \lim_{\gamma \to\infty} \gamma^{-1}\shapley(\gamma x) \enspace.
\]
Since $\shapley$ commutes with the addition of a constant vector, it is immediate that $\hat{\shapley}(\alpha,\dots,\alpha)= (\alpha,\dots,\alpha)$ holds
for all $\alpha\in \R$. We call such fixed point of $\hat{\shapley}$ 
{\em uniform}. 
\begin{theorem}[{\cite[Th.~3.1]{ergodicity_conditions}}]\label{th-ergodicity}
Suppose that the recession function $\hat{\shapley}$ has only uniform fixed
points. Then, there exist $u\in \R^n$ and $\lambda\in \R$ such that
$\shapley(u)=\lambda +u$. In particular, the mean payoff of the game
is equal to $\lambda/2$ for all initial states.
\end{theorem}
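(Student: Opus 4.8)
The plan is to derive the existence of an additive eigenvector of $\shapley$ from Kohlberg's theorem (Theorem~\ref{theorem:kohlberg}), using the hypothesis only to force the cycle-time vector of the Shapley operator to be constant; once that is known, the eigenvector can be read off directly from the invariant half-line, and the statement about the mean payoff follows from Theorem~\ref{theorem:optimal_policies_proof}.

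First I would apply Lemma~\ref{lemma:basic_shapley_properties} together with Theorem~\ref{theorem:kohlberg} to obtain an invariant half-line $(\bias,\eigenval)$ of $\shapley$: there exist $\bias,\eigenval\in\R^{n}$ and $\gamma_{0}\ge 0$ such that
\[
\shapley(\bias+\gamma\eigenval)=\bias+(\gamma+1)\eigenval\qquad\text{for all }\gamma\ge\gamma_{0}.
\]
The crucial remark is that $\eigenval$ is automatically a fixed point of the recession function $\hat{\shapley}$, independently of any hypothesis. Indeed, dividing the displayed identity by $\gamma$ gives $\gamma^{-1}\shapley(\bias+\gamma\eigenval)=\gamma^{-1}\bias+(1+\gamma^{-1})\eigenval\to\eigenval$ as $\gamma\to\infty$; and since $\shapley_{|\R^{n}}$ is nonexpansive in the supremum norm (Lemma~\ref{lemma:basic_shapley_properties}), one has $\|\shapley(\bias+\gamma\eigenval)-\shapley(\gamma\eigenval)\|\le\|\bias\|$, hence $\|\gamma^{-1}\shapley(\bias+\gamma\eigenval)-\gamma^{-1}\shapley(\gamma\eigenval)\|\to 0$. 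It follows that $\gamma^{-1}\shapley(\gamma\eigenval)\to\eigenval$, i.e.\ $\hat{\shapley}(\eigenval)=\eigenval$. At this point the hypothesis enters: since all fixed points of $\hat{\shapley}$ are uniform, there is $\lambda\in\R$ with $\eigenval=(\lambda,\dots,\lambda)$.

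It then remains to substitute this back into the half-line identity. Taking $\gamma=\gamma_{0}$, the vector $\gamma_{0}\eigenval=(\gamma_{0}\lambda,\dots,\gamma_{0}\lambda)$ is constant, so additive homogeneity of $\shapley$ (Lemma~\ref{lemma:basic_shapley_properties}) gives $\shapley(\bias+\gamma_{0}\eigenval)=\gamma_{0}\lambda+\shapley(\bias)$, while the half-line identity gives $\shapley(\bias+\gamma_{0}\eigenval)=\bias+(\gamma_{0}+1)\eigenval=\gamma_{0}\lambda+(\lambda+\bias)$. Comparing the two, $\shapley(\bias)=\lambda+\bias$, so $u\coloneqq\bias$ and $\lambda$ satisfy the first assertion. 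Finally, since $\eigenval=(\lambda,\dots,\lambda)$, Theorem~\ref{theorem:optimal_policies_proof} identifies the value of the game as $\eigenval/2=(\lambda/2,\dots,\lambda/2)$, i.e.\ the mean payoff equals $\lambda/2$ from every initial state (consistently, iterating $\shapley(u)=\lambda+u$ gives $\shapley^{N}(u)=N\lambda+u$, so $\shapley^{N}(u)/N\to(\lambda,\dots,\lambda)$ in agreement with Corollary~\ref{corollary:limit_of_shapley}).

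The argument is short, and the one point deserving care is the identity $\hat{\shapley}(\eigenval)=\eigenval$. There one must know that the limit defining $\hat{\shapley}(\eigenval)$ exists -- which Kohlberg's asymptotics pin down directly, in accordance with the well-definedness of $\hat{\shapley}$ that stems from piecewise affinity of $\shapley_{|\R^{n}}$ (Lemma~\ref{lemma:basic_shapley_properties}) -- and one must control the discrepancy between $\shapley(\bias+\gamma\eigenval)$ and $\shapley(\gamma\eigenval)$ uniformly in $\gamma$, which is exactly what supremum-norm nonexpansiveness provides. Beyond that I expect no real obstacle; everything else is bookkeeping with additive homogeneity.
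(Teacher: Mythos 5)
Your argument is correct. Note, however, that the paper does not prove this statement at all: it is imported verbatim from Akian, Gaubert, and Hochart \cite[Th.~3.1]{ergodicity_conditions}, so there is no internal proof to match. What you have produced is a self-contained derivation that is valid precisely because the Shapley operators of this paper are piecewise affine and sup-norm nonexpansive (Lemma~\ref{lemma:basic_shapley_properties}), so Kohlberg's theorem applies. Your two key steps both check out: the squeeze $\|\gamma^{-1}\shapley(\bias+\gamma\eigenval)-\gamma^{-1}\shapley(\gamma\eigenval)\|\le\gamma^{-1}\|\bias\|\to 0$ legitimately establishes both that the limit defining $\hat{\shapley}(\eigenval)$ exists and that it equals $\eigenval$, so the hypothesis forces $\eigenval=(\lambda,\dots,\lambda)$; and the back-substitution at $\gamma=\gamma_0$ using additive homogeneity cleanly yields $\shapley(\bias)=\lambda+\bias$. (Equivalently, once $\eigenval$ is constant you could observe that $(\bias,\eigenval)$ being an invariant half-line with constant direction already forces $\shapley(\bias+\gamma_0\eigenval)=\lambda+(\bias+\gamma_0\eigenval)$, so $u=\bias+\gamma_0\eigenval$ also works.) The final identification of the value as $\lambda/2$ via Theorem~\ref{theorem:optimal_policies_proof} and the uniqueness of the half-line direction in Theorem~\ref{theorem:kohlberg} is fine. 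The trade-off relative to the cited source is that the result in \cite{ergodicity_conditions} is proved for abstract order-preserving, additively homogeneous operators where Kohlberg's theorem is unavailable and a genuinely different (topological/Collatz--Wielandt type) argument is needed; your proof buys brevity at the cost of being confined to the piecewise-affine case, which is all this paper requires.
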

The condition that $\shapley$ has only uniform fixed points can be checked by finding
invariant sets in directed hypergraphs~\cite{ergodicity_conditions}.
For concrete examples of games arising from spectrahedra, we shall see that this
leads to explicit conditions on the zero/non-zero pattern
of the matrices defining the spectrahedron (Remark~\ref{rk-recession}).

Let us now suppose that $\bo Q^{(0)}, \dots, \bo Q^{(n)} \in \puiseux^{m \times m}$ are Metzler matrices, and let $Q^{(k)} \coloneqq \sval(\bo Q^{(k)})$. By~\cite[Lemma~20]{tropical_spectrahedra} and Theorem~\ref{theorem:generic_metzler}, provided that the matrices $Q^{(k)}$ are generic, the affine spectrahedron 
\begin{align}
 \bspectra\coloneqq \{ \bo x \in \nnpuiseux^n \colon
 \bo Q^{(0)} + \bo x_1  \bo Q^{(1)} + \dots + \bo x_n \bo Q^{(n)}  \loew 0 \}
\label{eq:lastminute}
\end{align}
is nonempty if and only if the tropical Metzler spectrahedron $\spectra(Q^{(0)}, \dots, Q^{(n)})$ contains a point $x$ such that $x_0 \neq -\infty$. Let $\Gamma$ be the stochastic mean payoff game associated arising from the matrices $Q^{(0)}, \dots, Q^{(n)}$. The latter property can be checked using dominions as follows:
\begin{corollary}\label{coro:affine}
The set $\{ x \in \spectra(Q^{(0)}, \dots, Q^{(n)}) \colon x_0 \neq -\infty \}$ is nonempty if and only if there is a winning dominion $\dominion$ in $\Gamma$ such that  $0 \in \dominion$. 
\end{corollary}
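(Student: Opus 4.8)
The plan is to obtain this statement as an immediate consequence of the characterization of the supports of points of $\{x \in \trop^{n} \colon x \le \shapley(x)\}$ proved just above, once $\spectra(Q^{(0)},\dots,Q^{(n)})$ has been identified with that sub-fixed-point set.

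Concretely, I would first record the (purely notational) point that the $n+1$ matrices $Q^{(0)},\dots,Q^{(n)}$ give rise to a tropical Metzler spectrahedron $\spectra(Q^{(0)},\dots,Q^{(n)}) \subset \trop^{n+1}$ whose coordinates are indexed by $\{0,1,\dots,n\}$, and that the game $\Gamma$ has exactly $\{0,1,\dots,n\}$ as its set of Player Min states. Applying Lemma~\ref{lemma:spectrahedron_encodes_shapley} with $\lambda = 0$, together with $\spectra_{0} = \spectra$, gives
\[
\spectra(Q^{(0)},\dots,Q^{(n)}) = \{x \in \trop^{n+1} \colon x \le \shapley(x)\} \, ,
\]
where $\shapley$ is the Shapley operator of $\Gamma$. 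Under this identification, a point of $\spectra$ with $x_0 \neq -\infty$ is precisely a point of $\{x \colon x \le \shapley(x)\}$ whose support is a subset $K \subset \{0,1,\dots,n\}$ containing $0$.

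The second step is a direct appeal to the support characterization theorem stated above (with the role of $[n]$ now played by $\{0,1,\dots,n\}$): the set $\{x \colon x \le \shapley(x)\}$ contains a point with support $K$ if and only if $K$ is a winning dominion of $\Gamma$. Hence $\{x \in \spectra(Q^{(0)},\dots,Q^{(n)}) \colon x_0 \neq -\infty\}$ is nonempty if and only if there exists a winning dominion $\dominion$ of $\Gamma$ with $0 \in \dominion$. For the ``only if'' direction one simply takes $\dominion$ to be the support of a witnessing point $x$; for the ``if'' direction, the support theorem produces a point of $\spectra$ whose support equals the given dominion $\dominion$, and this point has $x_0 \neq -\infty$ since $0 \in \dominion$.

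Since every step invokes an already established result, I do not anticipate any genuine difficulty. The only point requiring minimal care is the index bookkeeping: the constant-matrix coordinate $x_0$, the Player Min state labelled $0$ in $\Gamma$, and the index $0$ occurring in a dominion $\dominion$ all refer to the same object, so that ``$x_0 \neq -\infty$'' and ``$0 \in \dominion$'' are literally the same condition.
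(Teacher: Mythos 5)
Your proposal is correct and is precisely the argument the paper intends: identify $\spectra(Q^{(0)},\dots,Q^{(n)})$ with $\{x \in \trop^{n+1} \colon x \le \shapley(x)\}$ via Lemma~\ref{lemma:spectrahedron_encodes_shapley} at $\lambda = 0$, and then apply the support-characterization theorem, observing that ``$x_0 \neq -\infty$'' means exactly that the support of $x$ contains the state $0$. The paper states this corollary without proof as an immediate consequence of that theorem, so your write-up supplies the same (routine) reasoning with the correct index bookkeeping.
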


\subsection{The archimedean feasibility problem}\label{sec:archimedean}

We next relate the tropical feasibility problem with the archimedean feasibility
problem. For simplicity of exposition, we consider the case of
conic spectrahedra.

We suppose that $\bo Q^{(1)}, \dots, \bo Q^{(n)} \in \puiseux^{m \times m}$ are symmetric Metzler matrices, set
$\bo Q(\bo x)\coloneqq \bo x_1  \bo Q^{(1)} + \dots + \bo x_n \bo Q^{(n)}$,
and consider $\bspectra$ as in~\eqref{eq:lastminute} with $\bo Q^{(0)}=0$.
For any fixed value of the parameter $t \in \R$, we also consider the real spectrahedron $\bspectra(t) \subset \nnR^{n}$ described by $\bo Q^{(1)}(t), \dots, \bo Q^{(n)}(t)$ in a similar manner. We want to study the feasibility problem of $\bspectra(t)$ as $t$ goes to infinity. We denote $\sval(\bo Q^{(k)}) = Q^{(k)}$ for all $k$ and we further suppose that the matrices $Q^{(1)}, \dots, Q^{(n)}$ satisfy Assumption~\ref{assump:well_formed}. The proof is based on the following definition and lemma.

\begin{definition}
For any $\alpha \ge 1$ we define the set $\bspectra_{2,\alpha} \subset \nnpuiseux^{n}$ as
\[
\bspectra_{2, \alpha} \coloneqq \Bigl\{ \bo x \in \nnpuiseux^{n} \colon  \forall i, \bo Q_{ii}(\bo x) \ge 0 \, ,
 \forall i \neq j, \bo Q_{ii}(\bo x)\bo Q_{jj}(\bo x) \ge \alpha(\bo Q_{ij}(\bo x))^{2} \Bigr\} \, .
 \]
\end{definition}

\begin{lemma}\label{lemma:approximate_spectra}
We have the inclusion $\bspectra_{2, (m-1)^{2}} \subset \bspectra \subset \bspectra_{2,1}$.
\end{lemma}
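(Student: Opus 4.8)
The plan is to establish the two inclusions separately; both reduce to elementary facts about the quadratic form $y\mapsto y^{\top}\bo Q(\bo x)y$ over the real closed field $\puiseux$. Fix $\bo x\in\nnpuiseux^{n}$ and write $M\coloneqq\bo Q(\bo x)$, a symmetric matrix over $\puiseux$ with entries $M_{ij}=\bo Q_{ij}(\bo x)$. Since $\puiseux$ is real closed, $M\loew 0$ is equivalent to $y^{\top}My\ge 0$ for every $y\in\puiseux^{m}$, and the order-theoretic inequalities used below (nonnegativity of squares, AM--GM, existence of square roots of nonnegative elements) are all valid in $\puiseux$.

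For $\bspectra\subset\bspectra_{2,1}$: if $\bo x\in\bspectra$ then $M\loew 0$, so every principal submatrix of $M$ is positive semidefinite. Taking $y=e_{i}$ gives $M_{ii}\ge 0$ for all $i$; taking the $2\times 2$ principal submatrix on indices $\{i,j\}$ (which is positive semidefinite, hence of nonnegative determinant) gives $M_{ii}M_{jj}-M_{ij}^{2}\ge 0$ for all $i\neq j$. These are precisely the defining inequalities of $\bspectra_{2,1}$, so $\bo x\in\bspectra_{2,1}$.

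For $\bspectra_{2,(m-1)^{2}}\subset\bspectra$: suppose $\bo x\in\bspectra_{2,(m-1)^{2}}$, so $M_{ii}\ge 0$ for all $i$ and $M_{ii}M_{jj}\ge (m-1)^{2}M_{ij}^{2}$ for all $i\neq j$. I would bound, for an arbitrary $y\in\puiseux^{m}$,
\[
y^{\top}My=\sum_{i}M_{ii}y_{i}^{2}+2\sum_{i<j}M_{ij}y_{i}y_{j}\ \ge\ \sum_{i}M_{ii}y_{i}^{2}-2\sum_{i<j}\lvert M_{ij}\rvert\,\lvert y_{i}\rvert\,\lvert y_{j}\rvert\,.
\]
From the hypothesis, $\lvert M_{ij}\rvert\le\frac{1}{m-1}\sqrt{M_{ii}M_{jj}}$, and AM--GM gives $\sqrt{M_{ii}M_{jj}}\,\lvert y_{i}\rvert\,\lvert y_{j}\rvert\le\frac12(M_{ii}y_{i}^{2}+M_{jj}y_{j}^{2})$, so the cross term is at most $\frac{1}{m-1}\sum_{i<j}(M_{ii}y_{i}^{2}+M_{jj}y_{j}^{2})$. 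The key combinatorial point is that in this last sum the term $M_{kk}y_{k}^{2}$ appears once for each of the $m-1$ pairs containing $k$, hence $\sum_{i<j}(M_{ii}y_{i}^{2}+M_{jj}y_{j}^{2})=(m-1)\sum_{k}M_{kk}y_{k}^{2}$. Therefore $2\sum_{i<j}\lvert M_{ij}\rvert\lvert y_{i}\rvert\lvert y_{j}\rvert\le\sum_{k}M_{kk}y_{k}^{2}$, whence $y^{\top}My\ge 0$; as $y$ was arbitrary, $M\loew 0$ and $\bo x\in\bspectra$.

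I do not expect a genuine obstacle: the entire content of the lemma is the $(m-1)^{2}$ normalization, which is exactly what the counting argument above consumes and which is tight. The only points deserving a word are (i) that we work over $\puiseux$ and not $\R$ — harmless, since $\puiseux$ is real closed, and if one prefers to avoid square roots one can replace the AM--GM step by $2\lvert M_{ij}\rvert\lvert y_{i}\rvert\lvert y_{j}\rvert\le\frac{1}{m-1}(M_{ii}y_{i}^{2}+M_{jj}y_{j}^{2})$, obtained from $2ab\le c^{-1}a^{2}+cb^{2}$ with $c=(m-1)\lvert M_{ij}\rvert/M_{ii}$ when $M_{ij}\neq 0$ (the term being absent otherwise) — and (ii) that the Metzler hypothesis is not actually used, since $M_{ij}y_{i}y_{j}\ge-\lvert M_{ij}\rvert\lvert y_{i}y_{j}\rvert$ holds irrespective of the sign of $M_{ij}$; the statement thus holds verbatim for arbitrary symmetric pencils.
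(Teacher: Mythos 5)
Your proof is correct, and the paper itself gives no argument here --- it simply defers to \cite[Section~5.1]{tropical_spectrahedra}; your two steps (nonnegativity of $1\times 1$ and $2\times 2$ principal minors for the upper inclusion, and the weighted AM--GM plus the counting identity $\sum_{i<j}(M_{ii}y_i^2+M_{jj}y_j^2)=(m-1)\sum_k M_{kk}y_k^2$ for the lower one) constitute the standard argument that the companion paper uses for this kind of approximation of the PSD cone by scaled diagonal dominance. Your side remarks are also accurate: real closedness of $\puiseux$ makes the order-theoretic manipulations legitimate, and the Metzler hypothesis plays no role in this particular lemma.
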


We refer to \cite[Section~5.1]{tropical_spectrahedra} for the proof of Lemma~\ref{lemma:approximate_spectra}. We also introduce a threshold $T > 1$ such that for all $t \geq T$, every series $\bo Q^{(k)}_{ij}(t)$ converges, and the signs of $\bo Q^{(k)}_{ij}(t)$ and $Q^{(k)}_{ij}$ are the same. For any $t \geq T$, we define
\[
\defor(t) \coloneqq \max_{Q^{(k)}_{ij} \neq \zero}\big| \abs{Q^{(k)}_{ij}} - \log_{t}\abs{ \bo Q^{(k)}_{ij}(t)} \big| \, .
\]
By the definition of valuation and order in $\puiseux$ we have $\lim_{t \to \infty} \defor(t) = 0$. Furthermore, for all $k,i,j$ we have $t^{\abs{Q^{(k)}_{ij}} - \defor(t)} \le \abs{\bo Q^{(k)}_{ij}(t)} \le t^{\abs{Q^{(k)}_{ij}} + \defor(t)}$, even if $Q^{(k)}_{ij} = \zero$ (we set $t^{\zero} = 0$). The next theorem relates the feasibility of the spectrahedron $\bspectra(t)$ with the value of the mean payoff game associated with $Q^{(1)}, \dots, Q^{(n)}$ when $t$ is sufficiently large. We point out that this result does not require the genericity assumptions of Theorem~\ref{theorem:generic_metzler}.
\begin{theorem}\label{th:archimedean}
Let $m \ge 2$, and $\gameval$ be the value of the stochastic mean payoff game associated with $Q^{(1)}, \dots, Q^{(n)}$. Let $\lambda \coloneqq \max_k \gameval_k$, and suppose that $\lambda \neq 0$. Take any $t \geq T$ such that $\delta(t) < \abs{\lambda}$ and 
\[
t > (2(m-1)n)^{1/(2\abs{\lambda} - 2\defor(t))} \, .
\]
Then, the spectrahedron $\bspectra(t)$ is nontrivial if and only if $\lambda$ is positive.
\end{theorem}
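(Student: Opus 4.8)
The plan is to compare the spectrahedron $\bspectra(t)$ with the two sandwiching families $\bspectra_{2,1}$ and $\bspectra_{2,(m-1)^2}$ of Lemma~\ref{lemma:approximate_spectra}, but evaluated at the numerical parameter $t$ rather than over $\puiseux$, and to match these real constraints against the tropical constraints defining the reinforced sets $\spectra_\lambda$ from Definition~\ref{def:sublevel_set}. Concretely, I would first note that $\bspectra(t)$ is nontrivial if and only if it contains a point with \emph{all} coordinates strictly positive; indeed, by the support/dominion analysis one may always pass to a subgame, but here the cleanest route is simply to rescale a nonzero feasible point and observe that the constraints~\eqref{eq:first_kind}--\eqref{eq:second_kind} are invariant under adding a constant to all coordinates, so we are free to look for solutions of the form $\bo x_k = t^{x_k}$ with $x \in \R^n$ finite. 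For such a point, $\abs{\bo Q^{(k)}_{ij}(t)} = t^{\abs{Q^{(k)}_{ij}} + O(\defor(t))}$, so each real inequality $\bo Q_{ii}(\bo x)(t) \ge 0$, resp.\ $\bo Q_{ii}(\bo x)\bo Q_{jj}(\bo x)(t) \ge \alpha (\bo Q_{ij}(\bo x)(t))^2$, becomes, after taking $\log_t$ of both sides, a comparison of two maxima of the shape appearing in $\spectra_\mu$ for a suitable $\mu$ depending on $\defor(t)$ and on $\alpha$.

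The key quantitative step is to track how the multiplicative slack $\alpha$ in $\bspectra_{2,\alpha}$ and the additive error $\defor(t)$ translate into a shift of the tropical reinforcement parameter. Taking $\log_t$, a factor $\alpha$ inside $(\bo Q_{ij}(\bo x))^2$ contributes $\log_t \alpha$, and the entrywise distortion contributes at most $2(m-1)\defor(t)$ (the $(m-1)$ coming from the number of terms in a row and the factor $2$ from squaring). Hence there is an explicit $\mu(t) = O(\log_t((m-1)n) + \defor(t))$ such that
\[
\spectra_{-\mu(t)} \ \text{nontrivial (with a finite point)} \ \Longrightarrow \ \bspectra(t) \ \text{nontrivial} \ \Longrightarrow \ \spectra_{\mu(t)} \ \text{nontrivial.}
\]
By Theorem~\ref{theorem:spectra_and_game}, $\spectra_\nu$ is nontrivial precisely when $\nu \ge 2\lambda$, where $\lambda = \max_k \gameval_k$. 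So if $\lambda > 0$ and $\mu(t) < 2\lambda$, the left implication (using that $\spectra_{-\mu(t)} \supseteq \spectra_0 = \spectra$ and that $\spectra$ then contains a point, which by genericity-free reasoning via the dominion argument can be taken finite on its support — or more simply, using that for $\lambda>0$ one has $\spectra_{\epsilon}$ nontrivial for small $\epsilon>0$, hence contains a \emph{finite} point by Lemma~\ref{lemma:lift}'s setting) gives nontriviality of $\bspectra(t)$; and if $\lambda < 0$ and $\mu(t) < 2\abs{\lambda}$, the right implication forces $\bspectra(t)$ trivial since $\spectra_{\mu(t)}$ is then trivial. The condition $\mu(t) < 2\abs{\lambda}$ is exactly what the hypotheses $\defor(t) < \abs{\lambda}$ and $t > (2(m-1)n)^{1/(2\abs{\lambda} - 2\defor(t))}$ encode: the latter says $\log_t(2(m-1)n) < 2\abs{\lambda} - 2\defor(t)$, i.e.\ $\log_t(2(m-1)n) + 2\defor(t) < 2\abs{\lambda}$, which dominates $\mu(t)$.

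I expect the main obstacle to be the \emph{bookkeeping of the finiteness of the solution and the direction of the inequalities}: Lemma~\ref{lemma:approximate_spectra} is stated over $\puiseux$, whereas here we need it at a fixed real $t$, so I must either re-derive the elementary matrix inequality (any Metzler matrix $M \loew 0$ with $M_{ii}\le 0$ satisfies $M_{ii}M_{jj}\ge M_{ij}^2$, and conversely the $2\times 2$ minors with the extra factor $(m-1)^2$ suffice by a diagonal-dominance argument) directly for real symmetric Metzler matrices, or invoke a transfer principle. The cleanest fix is to redo the short linear-algebra argument of \cite[Section~5.1]{tropical_spectrahedra} verbatim for $\R$, which is routine. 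The second delicate point is ensuring that when $\lambda>0$ we genuinely obtain a \emph{finite} feasible point of the real spectrahedron and not merely a point on a proper stratum: this is handled by choosing, via Kohlberg's invariant half-line (Theorem~\ref{theorem:kohlberg}) for the Shapley operator $\shapley$, a finite $x = \bias + \gamma\eigenval$ with $\gamma$ large, which satisfies $\mu + x \le \shapley(x)$ for any $\mu < 2\lambda$, hence lies in $\spectra_\mu$ with full support; substituting $\bo x_k = t^{x_k}$ then lands in $\bspectra_{2,(m-1)^2}(t) \subset \bspectra(t)$ once $\mu \ge 2(m-1)\defor(t) + \log_t((m-1)^2)$, which again follows from the stated bound on $t$. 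Assembling these pieces yields the equivalence.
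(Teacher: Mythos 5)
Your proposal is correct and follows essentially the same route as the paper: sandwich $\bspectra(t)$ between the order-$1$/order-$2$ minor relaxations of Lemma~\ref{lemma:approximate_spectra}, lift a point of the reinforced tropical set $\spectra_\mu$ (obtained from Theorem~\ref{theorem:spectra_and_game} via the Collatz--Wielandt/invariant half-line characterization) by $\bo x_k = t^{x_k}$, and absorb the errors $n$, $\defor(t)$, and $(m-1)^2$ into the reinforcement, which is exactly what the stated bound on $t$ permits. The only blemish is that your displayed implication chain swaps the roles of $\spectra_{+\mu(t)}$ and $\spectra_{-\mu(t)}$ (the \emph{reinforced} set must lift to $\bspectra(t)$, and $\bspectra(t)$ only maps into the \emph{relaxed} set), but your subsequent prose and the final assembly use the correct directions, so this is a sign typo rather than a gap.
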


\begin{proof}
Suppose that $\lambda$ is positive. Theorem~\ref{theorem:spectra_and_game} shows that there is a point $x \in \trop^{n}$, $x \neq \zero$ such that
\[
\forall (i,j) \in [m]^{2}, \  2\lambda + Q_{ij}^{-}(x) \le \frac{1}{2}Q_{ii}^{+}(x) + \frac{1}{2}Q_{jj}^{+}(x) \, .
\]
Thus, we have $t^{4\lambda} t^{2Q_{ij}^{-}(x)} \le t^{Q_{ii}^{+}(x)}t^{Q_{jj}^{+}(x)}$. Take the point $\bo x = (t^{x_{1}}, \dots, t^{x_{n}})$, where $t^{\zero} = 0$. Observe that we have
\[
\forall (i,j) \in [m]^{2}, \ \bo Q_{ij}^{-}(t)(\bo x) \le \sum_{Q_{ij}^{(k)} \in \negtrop} t^{\abs{Q_{ij}^{(k)}} + \defor(t) + x_{k}} \le nt^{Q_{ij}^{-}(x) + \defor(t)}
\]
and
\[
\forall i \in [m], \ \bo Q_{ii}^{+}(t)(\bo x) \ge \sum_{Q_{ii}^{(k)} \in \postrop} t^{Q_{ii}^{(k)} - \defor(t) + x_{k}} \ge t^{Q_{ii}^{+}(x) - \defor(t)}.
\]
Therefore, for all $i \in [m]$ we have
\[
\bo Q_{ii}^{+}(t)(\bo x) \ge \frac{t^{2\lambda-2\defor(t)}}{n} \bo Q_{ii}^{-}(t) \, .
\]
Since $t \ge (2n)^{1/(2\lambda - 2\defor(t))}$, we have $\bo Q_{ii}^{+}(t)(\bo x) \ge 2\bo Q_{ii}^{-}(t)$ and hence $\bo Q_{ii}(\bo x) \ge \frac{1}{2} \bo Q_{ii}^{+}(\bo x)$. Thus, for any $i < j$ we have
\begin{align*}
\bo Q_{ii}(\bo x) \bo Q_{jj}(\bo x) &\ge \frac{1}{4}\bo Q^{+}_{ii}(\bo x) \bo Q^{+}_{jj}(\bo x) \ge \frac{1}{4}t^{Q_{ii}^{+}(x) + Q_{jj}^{+}(x) - 2\defor(t)} \\ 
&\ge \frac{1}{4}t^{2\abs{Q_{ij}(x)} + 4\lambda - 2\defor(t)} \ge \frac{t^{4\lambda - 4\defor(t)}}{4n^{2}}(\bo Q_{ij}(\bo x))^{2} \, .
\end{align*}
Hence, since $t \ge (2(m-1)n)^{1/(2\lambda - 2\defor(t))}$, we get $(\bo Q_{ii}(t)(\bo x))  (\bo Q_{jj}(t)(\bo x)) \ge (m-1)^{2}(\bo Q_{ij}(t)(\bo x))^{2}$ for all $i < j$ and the point $\bo x$ belongs to $\bspectra(t)$ by Lemma~\ref{lemma:approximate_spectra}.

Conversely, suppose that $\lambda$ is negative but $\bspectra(t)$ is nontrivial. Take any nonzero point $\bo x \in \bspectra(t)$ and a point $x \in \trop^{n}$ defined as $x_{k} = \log_{t}( \bo x_{k})$ for all $k \in [n]$ (where $\log_{t}(0) = \zero$). Since $\lambda$ is negative, Theorem~\ref{theorem:spectra_and_game} shows that for every fixed $\varepsilon > 0$ there is a pair $(i,j) \in [m]^{2}$ such that
\[
2\lambda + \varepsilon + Q_{ij}^{-}(x) > \frac{1}{2}Q_{ii}^{+}(x) + \frac{1}{2}Q_{jj}^{+}(x) \, .
\]
Hence $t^{4\lambda + 2\varepsilon}t^{2Q_{ij}^{-}(x)} > t^{Q_{ii}^{+}(x)}t^{Q_{jj}^{+}(x)}$. Similarly to the previous case, observe that we have
\[
\bo Q_{ij}^{-}(t)(\bo x) \ge \sum_{Q_{ij}^{(k)} \in \negtrop} t^{Q_{ij}^{(k) - \defor(t)} + x_{k}} \ge t^{Q_{ij}^{-}(x) - \defor(t)}
\]
and
\[
\bo Q_{ii}^{+}(t)(\bo x) \le \sum_{Q_{ii}^{(k)} \in \postrop} t^{Q_{ii}^{(k)} + \defor(t) + x_{k}} \le nt^{Q_{ii}^{+}(x) + \defor(t)} \, .
\]
Therefore, we have
\[
t^{4\lambda + 2\varepsilon + 2\defor(t)} (\bo Q^{-}_{ij}(t)(\bo x))^{2} > t^{Q_{ii}^{+}(x)}t^{Q_{jj}^{+}(x)} \ge \frac{t^{-2\defor(t)}}{n^2}(\bo Q^{+}_{ii}(t)(\bo x))  (\bo Q^{+}_{jj}(t)(\bo x)) \, .
\]
Thus, if we take $\varepsilon$ such that $2\lambda + \varepsilon + 2\defor(t) < 0$ and $t \ge n^{1/\abs{2\lambda + \varepsilon + 2\defor(t)}}$, we have $(\bo Q^{-}_{ij}(t)(\bo x))^{2} > (\bo Q^{+}_{ii}(t)(\bo x))  (\bo Q^{+}_{jj}(t)(\bo x))$, what gives a contradiction.
\end{proof}
\begin{remark}
It is easy to see from the proof that if the matrices $\bo Q^{(k)}$ are diagonal (which holds, in particular, if $m = 1$), then the term $2(m-1)$ is not needed, and the bound for $t$ takes the form $t > n^{1/(2\abs{\lambda} - 2\defor(t))}$.
\end{remark}

\begin{example}
We can apply this bound to the example presented in Section~\ref{section:illustration}. Let us take the simplest lift of matrices $Q^{(1)}, Q^{(2)}, Q^{(3)}$, namely $\bo Q^{(k)}_{ij} = \sign(Q^{(k)}_{ij})t^{Q^{(k)}_{ij}}$. In this case we have $\defor(t) = 0$ for all $t > 1$. Moreover, the computations presented in Example~\ref{ex:payoff_computation} show that $\lambda = 1/56$. Thus, if we take $t > 12^{28}$, then the spectrahedron $\bspectra(t)$ is nontrivial. In this special instance, the lower bound provided by Theorem~\ref{th:archimedean} is large because we chose a game that is nearly singular: the mean payoff $\lambda$ is close to $0$. There are, however, other instances of nonarchimedean spectrahedra for which $\lambda$ is of order one, leading to a $(mn)^{O(1)}$ bound for $t$. 
\end{example}

\section{Algorithms}
\label{sec-algo}

In this section, we discuss algorithms that can solve \Tropicalsdfp{} thanks to the equivalence with \Smpg{} established in Theorem~\ref{theorem:simple_games}.

\subsection{Complexity bounds}
We first derive complexity bounds for \Tropicalsdfp{}.
Let $L$ denote the maximal number of bits needed to encode an entry from matrices $Q^{(1)}, \dots, Q^{(n)}$. An exponential bound for \Tropicalsdfp{} is achieved by a naive algorithm that enumerates all policies of one player and uses linear programming to solve the remaining $1$-player game. 

\begin{theorem}\label{th:strategy_enum}
There is an algorithm that solves \textsc{Tmsdfp} in 
\[
\min\{n^{m}, m^{2n}\}\Poly(m, n, L)
\]
arithmetic operations.
\end{theorem}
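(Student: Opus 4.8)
The plan is to exploit the equivalence between \Tropicalsdfp{} and \Smpg{} furnished by Theorem~\ref{theorem:spectra_and_game}: from the input matrices $Q^{(1)},\dots,Q^{(n)}$, which satisfy Assumption~\ref{assump:well_formed}, one constructs in linear time the stochastic mean payoff game $\Gamma$ with $m$ states of Player Max and $n$ states of Player Min, and the associated tropical Metzler spectrahedron is trivial if and only if $\max_{k\in[n]}\gameval_k<0$, where $\gameval$ is the value of $\Gamma$. Hence it suffices to decide the sign of $\max_k\gameval_k$.

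First I would bound the number of policies of each player. At a state $i\in[m]$ of Player Max, $B^{(i)}$ contains at most one action $\{k\}$ per index $k\in[n]$ with $Q^{(k)}_{ii}\in\postrop$, so $\card{B^{(i)}}\le n$ and Player Max has at most $n^{m}$ policies. At a state $k\in[n]$ of Player Min, $A^{(k)}$ contains at most $m$ singletons and at most $\binom{m}{2}$ pairs, so $\card{A^{(k)}}\le m(m+1)/2\le m^{2}$ and Player Min has at most $m^{2n}$ policies. The algorithm enumerates whichever of the two families is the smaller, that is, $\min\{n^{m},m^{2n}\}$ policies.

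Next, recall from Theorem~\ref{th:optimal_policies} that $\gameval_k=\max_{\tau}\min_{\sigma}g_k(\sigma,\tau)=\min_{\sigma}\max_{\tau}g_k(\sigma,\tau)$, and that once a policy of one player is fixed the remaining object is a $1$-player game whose value vector is computable in $\Poly(m,n,L)$ arithmetic operations by linear programming, as recalled in Section~\ref{section:stochastic_games}. Fixing a policy $\tau$ of Player Max yields a $1$-player game controlled by Player Min, with value $\gameval^{\tau}\in\R^{n}$, and one checks that $\gameval_k=\max_{\tau}\gameval^{\tau}_k$, whence $\max_k\gameval_k=\max_{\tau}\max_k\gameval^{\tau}_k$. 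Symmetrically, fixing a policy $\sigma$ of Player Min yields a $1$-player game controlled by Player Max, with value $\gameval^{\sigma}$, and $\gameval_k=\min_{\sigma}\gameval^{\sigma}_k$, whence $\max_k\gameval_k=\max_k\min_{\sigma}\gameval^{\sigma}_k$. In either case, $\max_k\gameval_k$ — and therefore the answer to \Tropicalsdfp{} — is recovered from the value vectors of $\min\{n^{m},m^{2n}\}$ one-player games, each solved in $\Poly(m,n,L)$ arithmetic operations, for a total of $\min\{n^{m},m^{2n}\}\Poly(m,n,L)$ operations.

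There is no genuine obstacle here, since this is the \emph{naive} bound. The two points that require a little care are (i) the counting of Player Min's policies, where the actions of the form $\{i,j\}$ make $\card{A^{(k)}}$ of order $m^{2}$ rather than $m$, which produces the exponent $2n$; and (ii) checking that $\max_k\gameval_k$ is correctly reconstituted from the one-player values — for the Max-enumeration this is merely the interchange $\max_{\tau}\max_k=\max_k\max_{\tau}$, whereas for the Min-enumeration the operator $\min_{\sigma}$ must be kept inside and $\max_k$ applied afterwards. A minor remaining point is to observe that the numbers occurring in the associated linear programs have bit-length polynomial in $L$, so that each one-player game is indeed solved within the stated budget.
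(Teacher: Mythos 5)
Your proposal is correct and follows essentially the same route as the paper: build the game $\Gamma$ via Theorem~\ref{theorem:spectra_and_game}, enumerate the policies of whichever player has fewer (at most $\min\{n^{m},m^{2n}\}$, with the exponent $2n$ coming from the $O(m^{2})$ actions of Player Min exactly as you count them), solve each resulting one-player game by linear programming in $\Poly(m,n,L)$ operations, and recover $\max_{k}\gameval_{k}$ from the one-player values. Your write-up is in fact slightly more careful than the paper's, which leaves the policy count and the $\min_{\sigma}$/$\max_{\tau}$ reconstruction of $\gameval$ implicit.
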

\begin{proof}
Take the matrices $Q^{(1)}, \dots, Q^{(n)}$ and consider the associated stochastic game $\Gamma$. If $m^{2n} \le n^{m}$, then we do the following: for each policy $\sigma$ of Player Min, we consider a $1$-player game $\Gamma^{\sigma}$ induced by fixing $\sigma$. The value of $\Gamma^{\sigma}$, denoted $\gameval^{\sigma}$, can then be found in polynomial time by linear programming (see \cite[Section 2.9]{filar_vrieze}). Moreover, for every fixed state $k \in [n]$, the value $\gameval_{k}$ of $\Gamma$ can be found by taking the minimum over $\sigma$ of values $\gameval^{\sigma}_{k}$. Now, by Theorem~\ref{theorem:spectra_and_game}, the tropical Metzler spectrahedron $\spectra$ associated with $Q^{(1)}, \dots, Q^{(n)}$ is nontrivial if and only if $\gameval$ has at least one nonnegative entry. Since Player Min has at most $m^{2n}$ policies in $\Gamma$, this procedure takes at most $m^{2n}\Poly(m, n, L)$ arithmetic operations. If $m^{2n} > n^{m}$, then we do the analogous operation for Player Max.
\end{proof}

The best currently known bounds for \Tropicalsdfp{} can be derived from the
interpretation of stochastic games as LP-type problems.
These bounds are randomized subexponential,
~\cite{halman}, and also~\cite{hansen_zwick_random_facet} for a recent improvement. A distinctive feature of this approach is that it works in \textit{strongly} subexponential time, i.e., its arithmetic complexity does not depend on $L$.

\begin{theorem}\label{th:randomized}
There is a randomized algorithm that solves \textsc{Tmsdfp} in expected 
\[
\exp(O(\sqrt{(m + n)\log(m + n)}))
\]
arithmetic operations.
\end{theorem}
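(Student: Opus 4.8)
The plan is to reduce \Tropicalsdfp{} to \Smpg{} and then invoke the known randomized subexponential algorithms for stochastic mean payoff games. First I would use Theorem~\ref{theorem:simple_games}, together with the explicit construction of the game $\Gamma$ from the matrices $Q^{(1)},\dots,Q^{(n)}$ carried out in Section~\ref{sec:tropical_games}, to pass in polynomial time from a \Tropicalsdfp{} instance with parameters $m,n$ to an instance of \Smpg{} on a game with $m$ states of Player Max, $n$ states of Player Min, and Nature states each carrying a fixed uniform transition. By Theorem~\ref{theorem:spectra_and_game}, it then suffices to decide whether this game has a winning initial state, and the number of states relevant to the decision is $O(m+n)$.

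Next I would recall that stochastic mean payoff games fit into the framework of LP-type problems (abstract optimization problems); this is the content of Halman's work~\cite{halman}. The point is that, fixing a stationary policy of one player, the residual $1$-player game (a Markov decision process) has a value computable by linear programming in $\Poly(m,n,L)$ arithmetic operations, and the value of $\Gamma$ is obtained by optimizing over these policies, which one shows satisfies the LP-type axioms with ground set the set of states and combinatorial dimension $O(m+n)$; the number of violation tests is polynomial in $m$ and $n$, and each basis computation is again a linear program over a $0$- or $1$-player game. I would then plug these parameters into the randomized subexponential algorithm for LP-type problems of Matou\v{s}ek, Sharir and Welzl and of Kalai, which runs in expected $\exp(O(\sqrt{d\log N}))$ time for combinatorial dimension $d$ and $N$ constraints, times the per-operation cost. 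With $d=O(m+n)$ and $N=\Poly(m,n)$ this yields a bound of the form
\[
\exp\!\bigl(O(\sqrt{(m+n)\log(m+n)})\bigr)\cdot\Poly(m,n,L)
\]
arithmetic operations. Finally I would invoke the sharper analysis of the Random-Facet pivoting rule due to Hansen and Zwick~\cite{hansen_zwick_random_facet}, which gives a \emph{strongly} subexponential guarantee, i.e.\ one whose arithmetic complexity is independent of the bit size $L$, thereby producing the claimed bound of $\exp(O(\sqrt{(m+n)\log(m+n)}))$ arithmetic operations.

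The main obstacle, and the step where care is needed, is the bookkeeping of parameters. One must make sure that the LP-type formulation has combinatorial dimension genuinely $O(m+n)$ (rather than the number of actions, which can be of order $m^{2}n$) and that the reduction from \Tropicalsdfp{} to \Smpg{} does not increase the number of states beyond $O(m+n)$. One must also check that the (strongly) subexponential guarantees of~\cite{halman,hansen_zwick_random_facet} apply to the precise class of games produced by our construction, namely games whose stochastic vertices have a fixed uniform $1/2$ split; this is exactly the class handled by those references (simple stochastic games and their mean-payoff variants), so the verification is routine but should be made explicit.
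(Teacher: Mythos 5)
Your overall route coincides with the paper's: construct the game $\Gamma$ with $m$ Max states and $n$ Min states from the matrices, reduce triviality of the tropical Metzler spectrahedron to deciding the sign of the value via Theorem~\ref{theorem:spectra_and_game}, and then invoke the subexponential algorithm that comes from viewing stochastic games as LP-type problems. This is precisely Halman's Theorem~4.1, which the paper uses as a black box after first converting $\Gamma$ into Halman's class of games by the (strongly polynomial) first step of the Andersson--Miltersen reduction; your deferral of that conversion as ``routine'' is acceptable, since the transformation does not increase the number of min and max vertices beyond $n$ and $m$, and only those vertices enter the exponent $\exp(O(\sqrt{p\log p}))$.

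The one genuine weak point is your handling of the bit-size parameter $L$. Your intermediate bound is $\exp(O(\sqrt{(m+n)\log(m+n)}))\cdot\Poly(m,n,L)$, which is \emph{not} the claimed bound, and the appeal to Hansen--Zwick~\cite{hansen_zwick_random_facet} does not repair it: their result improves the expected number of pivoting steps of the Random-Facet rule (i.e., the subexponential exponent), not the cost of the individual basis computations, so the $\Poly(m,n,L)$ factor coming from your use of general-purpose linear programming for the residual one-player games would survive. The way to obtain a bound independent of $L$ is to observe that the primitive operations in Halman's framework --- evaluating a game once the policies are essentially fixed --- amount to solving linear systems, which is strongly polynomial by Gaussian elimination; this is the content of Remark~\ref{remark:payoff_strongly_polynomial}, which the paper also uses for the final value computation once a pair of optimal policies has been found. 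With that substitution in place of the appeal to~\cite{hansen_zwick_random_facet}, your argument goes through.
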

\begin{proof}
As in the proof of Theorem~\ref{th:strategy_enum}, take the matrices $Q^{(1)}, \dots, Q^{(n)}$ and consider the associated game $\Gamma$. Now, we apply to $\Gamma$ the first step of reduction presented in \cite[Lemma~1 and Lemma~3]{andersson_miltersen} (i.e., the step presented in Fig.~2 of the latter reference). This converts $\Gamma$ in a strongly polynomial complexity to a game $\Gamma_{1}$, which has the same optimal policies as $\Gamma$ and has the form considered by
Halman~\cite{halman}. Moreover, $\Gamma_{1}$ has at most $n$ min vertices and at most $m$ max vertices (in Halman's terminology). Furthermore, Halman~\cite[Theorem~4.1]{halman} gives an algorithm that finds optimal policies for games belonging to his class in the expected $\exp(O(\sqrt{p\log p}))$ arithmetic operations, where $p$ is the total number of min and max vertices. Hence, a pair of optimal policies in $\Gamma_{1}$ (which is also optimal in $\Gamma$) can be found in the expected 
\[
\exp(O(\sqrt{(m + n)\log(m + n)}))
\]
arithmetic operations. Once a pair of optimal policies is known, the value of $\Gamma$ can be found in strongly polynomial complexity by Remark~\ref{remark:payoff_strongly_polynomial}. As in the proof of Theorem~\ref{th:strategy_enum}, we use the value of $\Gamma$ to decide whether the tropical Metzler spectrahedron $\spectra$ is trivial.
\end{proof}

\subsection{Value iteration}
We now present an algorithm with poorer theoretical bounds,
but which is well adapted practically to some large scale instances: value iteration. This algorithm may be thought of as a nonlinear
analogue of the power algorithm to compute the dominant eigenvalue of a matrix.
Its advantage lies in scalability. 
This leads to a procedure called \textsc{CheckFeasibility}, provided in Figure~\ref{fig:valueiter} and which checks the existence of
a sub-harmonic vector of a Shapley operator of the form~\eqref{e-def-shapley}.
By Lemma~\ref{lemma:spectrahedron_encodes_shapley}, this is equivalent to finding a point in a tropical spectrahedron. 

\begin{figure}
\begin{small}
\begin{algorithmic}[1]
\Procedure {CheckFeasibility}{$\shapley$, 
$\varepsilon$} \\ \Lcomment{$\shapley$ a Shapley operator from $\trop^n$ to $\trop^n$, 
 $\varepsilon>0$ a numerical precision.}
\State $u \coloneqq 0 \in \R^n$, $\bias \coloneqq 0\in \R^n$
\While{$\max_k u_k>-\varepsilon$ and $\min_k u_k<\varepsilon$}
\State $\bias \coloneqq \max(\bias,u)$, $u \coloneqq \shapley(u)$  

\Lcomment{
The operation max on vectors is understood entrywise.}
\EndWhile
\If{$\max_k u_k\leq -\varepsilon$}
\State There is no vector $x\in\trop^n$,  $x\not\equiv-\infty$, such that $x\leq \shapley(x)$.
\Else
\State The vector $\bias$ satisfies $\bias\leq \shapley(\bias)$.
\EndIf
\EndProcedure
\end{algorithmic}
\end{small}
\caption{Checking the feasibility of a tropical 
semidefinite problem 
by value iteration.} \label{fig:valueiter}
\end{figure}

We next establish the correctness of the \textsc{CheckFeasibility} algorithm,
under the assumption of Corollary~\ref{coro-universal}, that
all the initial states in the mean payoff game associated
with the Shapley operator $\shapley$ have the same value $\lambda$.
It follows from Section~\ref{sec:game_equivalence} (Corollary~\ref{corollary:same_payoff}) that every  stochastic 
mean payoff game reduces polynomially to a game with this property.

\begin{theorem}\label{prop-algo}
Suppose that all the initial states of the game with Shapley operator $\shapley$
have the same value. Suppose in addition that this value,
$\lambda$, differs from $0$.
Then, for all choices of $\varepsilon>0$,
the procedure {\sc CheckFeasibility} terminates and
is correct.
\end{theorem}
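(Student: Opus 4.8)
The plan is to track the value-iteration sequence $u^{(N)} \coloneqq \shapley^{N}(0)$ and to show that the two exit conditions of the \texttt{while} loop correspond exactly to the two possible signs of $\lambda$. First I record the loop invariant: just before the guard is tested for the $(N{+}1)$-st time, the current value of $u$ is $u^{(N)}$ and the current value of $\bias$ equals $\max_{0 \le j \le N-1} u^{(j)}$ (entrywise maximum); since $\varepsilon>0$, the first test $\max_k 0 > -\varepsilon$ and $\min_k 0 < \varepsilon$ always passes, so the body runs at least once and this makes sense with $N\ge 1$. By Lemma~\ref{lemma:basic_shapley_properties} the operator $\shapley$ restricts to an order-preserving, piecewise affine, nonexpansive self-map of $\R^{n}$, so Corollary~\ref{corollary:limit_of_shapley} applies; combined with Theorem~\ref{theorem:optimal_policies_proof} and the hypothesis that all initial states have value $\lambda$, it gives $\lim_{N\to\infty}\tfrac1N u^{(N)} = \eigenval = (2\lambda,\dots,2\lambda)$, and similarly $\lim_{M\to\infty}\tfrac1M\shapley^{M}(x)=\eigenval$ for every $x\in\R^{n}$.

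The heart of the argument is the following pair of implications, which I would prove next. Suppose the loop exits because $\min_k u^{(N)}_k \ge \varepsilon$. Since $\bias \ge u^{(j)}$ for $0\le j\le N-1$, order preservation yields $\shapley(\bias)\ge u^{(j+1)}$, hence $\shapley(\bias)\ge u^{(\ell)}$ for all $1\le \ell\le N$; in particular $\shapley(\bias)\ge u^{(N)}\ge \varepsilon\cdot(1,\dots,1) > 0 = u^{(0)}$, so $\shapley(\bias)\ge \max_{0\le j\le N-1}u^{(j)} = \bias$. Thus $\bias$ is a \emph{finite} vector with $\bias\le\shapley(\bias)$; iterating $\shapley$ gives $\shapley^{M}(\bias)\ge\bias$ for all $M$, whence $\eigenval = \lim_M\tfrac1M\shapley^{M}(\bias)\ge 0$, i.e.\ $\lambda\ge 0$. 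Dually, suppose the loop exits because $\max_k u^{(N)}_k \le -\varepsilon$. Then $u^{(N)}\le -\varepsilon\cdot(1,\dots,1) < 0 = u^{(0)}$, so by order preservation $u^{(jN)} = (\shapley^{N})^{j}(0)$ is non-increasing in $j$ and stays $\le u^{(N)}$, and letting $j\to\infty$ in $\tfrac{1}{jN}u^{(jN)}\le \tfrac{-\varepsilon}{jN}(1,\dots,1)$ gives $\eigenval\le 0$, i.e.\ $\lambda\le 0$. Hence, under the hypothesis $\lambda\neq 0$, only the branch consistent with the sign of $\lambda$ can ever be reached. This is the only real obstacle in the proof: one has to rule out that the iterates $\shapley^{N}(0)$ trigger the ``wrong'' $\varepsilon$-threshold before the asymptotic slope $2\lambda$ takes over, and the trick is precisely that hitting the feasibility threshold automatically exhibits a genuine finite sub-harmonic vector (via monotonicity together with $\bias\ge 0$ and $\shapley(\bias)\ge u^{(N)}>0$), while hitting the infeasibility threshold forces the slope to be non-positive.

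Finally I combine the asymptotics with these implications. If $\lambda>0$, every coordinate of $u^{(N)}$ tends to $+\infty$, so $\min_k u^{(N)}_k\ge\varepsilon$ for $N$ large, while the implication above shows $\max_k u^{(N)}_k\le-\varepsilon$ never occurs; hence the loop terminates, exits through the \texttt{else} branch, and returns $\bias$, which satisfies $\bias\le\shapley(\bias)$ with $\bias\not\equiv\zero$ — a correct answer, because by Lemma~\ref{lemma:spectrahedron_encodes_shapley} and Theorems~\ref{theorem:spectra_and_game}–\ref{theorem:shapley_and_games} the set $\{x\in\trop^{n}\colon x\le\shapley(x)\}=\spectra_{0}$ is nontrivial exactly when $\max_k\gameval_k=\lambda\ge 0$. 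If $\lambda<0$, every coordinate of $u^{(N)}$ tends to $-\infty$, so $\max_k u^{(N)}_k\le-\varepsilon$ eventually holds, while $\min_k u^{(N)}_k\ge\varepsilon$ never occurs; the loop terminates, exits through the \texttt{if} branch, and reports that no nontrivial vector $x$ with $x\le\shapley(x)$ exists, which is correct by Theorem~\ref{theorem:shapley_and_games} since $\max_k\gameval_k=\lambda<0$. In both cases the procedure terminates and is correct, for every $\varepsilon>0$.
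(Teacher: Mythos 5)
Your proposal is correct and follows essentially the same route as the paper's proof: termination comes from the asymptotic slope $\lim_N u^{(N)}/N = (2\lambda,\dots,2\lambda) \neq 0$, the feasible exit is certified by the same chain of inequalities showing $\shapley(\bias) \ge \bias$ with $\bias$ finite, and the infeasible exit is certified by iterating the order-preserving map from $u^{(N)} \le -\varepsilon e$. The only (immaterial) differences are organizational — you isolate the two implications ``exit condition $\Rightarrow$ sign of $\lambda$'' and take contrapositives, and you omit the paper's additional remarks on finite-precision arithmetic.
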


\begin{proof}

It will be convenient to use the following notation, for a vector
$z\in \R^n$:
\[
\mathbf{t}(z) \coloneqq \max_{i} z_i \, , \qquad \mathbf{b}(z) \coloneqq \min_{i} z_i \, ,
\]
so that the halting condition reads
\[
\mathbf{t}(u)\leq -\varepsilon \, \text{ or } \, \mathbf{b}(u)\geq \varepsilon \, .
\]
The sequences $u^{(0)},u^{(1)},\dots$
and $\bias^{(0)},\bias^{(1)},\dots$ generated by the algorithm implemented in exact arithmetic satisfy, for $\ell\geq 1$,
\[
u^{(\ell)}=\shapley^\ell(0), \qquad  \bias^{(\ell)} = \max(0,\shapley(0),\dots,\shapley^{\ell-1}(0)) \, .
\]
We know from Corollary~\ref{corollary:limit_of_shapley}
and Theorem~\ref{theorem:optimal_policies_proof} 
that the limit $\lim_\ell u^{(\ell)}/2\ell$ coincides with the mean payoff vector $\gameval$. The assumptions of the present theorem imply that $\gameval$ is a constant vector with nonzero entries, i.e.,
$\gameval=(\lambda,\dots,\lambda)^\top$ where $\lambda\neq 0$. Therefore, 
$\lim_\ell \mathbf{b} (u^{(\ell)})/\ell = \lim_\ell \mathbf{t} (u^{(\ell)})/\ell=\lambda$. In particular,
for $\ell$ sufficiently large, we have either $\mathbf{b} (u^{(\ell)}) >\varepsilon$,
or $\mathbf{t}( u^{(\ell)}) <-\varepsilon$, depending on the sign of $\lambda$,
and so the halting condition is ultimately satisfied. If $\mathbf{t} (u^{(\ell)}) <-\varepsilon$,
then $\shapley^\ell(0) \leq -\varepsilon e$, where $e = (1,1,\dots,1)$.
Since $\shapley$ is order preserving and commutes with the addition
of a constant vector, we get, after an immediate induction,
that $\shapley^{\ell p}(0)\leq -p\varepsilon e$ holds for all $p \ge 1$, and
therefore, the mean payoff vector of the game, which coincides with
$\lim_{p} \shapley^{\ell p}/(2\ell p)$, has negative entries.
By the Collatz--Wielandt property~\eqref{pty1:collatz-wielandt},
this implies that the tropical spectrahedron
$\{x:\, x\leq \shapley(x)\}$ is reduced to the trivial vector with
$-\infty$ entries. If $\mathbf{b} (u^{(\ell)}) > \varepsilon$, then the dual argument shows that
the mean payoff of the game $\lim_{p}\shapley^{\ell p}/(2\ell p)$
has positive entries. Moreover, in this case we get
\begin{align*}
\shapley(\bias)&= \shapley(\max(0,\shapley(0),\dots,\shapley^{\ell-2}(0),\shapley^{\ell-1}(0)))\\
&\geq 
\max(\shapley(0),\shapley^2(0),\dots,\shapley^{\ell-1}(0), \shapley^{\ell}(0)) \\
&\geq 
\max(\shapley(0),\shapley^2(0),\dots,\shapley^{\ell-1}(0), \varepsilon e)\\
&\geq \max(\shapley(0),\shapley^2(0),\dots, \shapley^{\ell-1}(0), 0) = \bias \, ,
\end{align*}
which implies that $v$ belongs to the tropical spectrahedron
$\{x:\, x\leq \shapley(x)\}$. 

We note that the algorithm still terminates, and provides %
a correct yes/no answer, if the operator
$\shapley$ is evaluated in fixed precision arithmetic,
in such a way that $u$ never differs from its
true value by more than $\varepsilon$ in the sup norm $\|\cdot\|$.
Indeed, let $\tilde u^{(0)},\tilde u^{(1)},\dots$
denote the successive approximate values of the variables $u$
which are computed.
Our assumption entails that
$\|\tilde u^{(\ell)} - u^{(\ell)}\| \leq \varepsilon$ at every step
$\ell$ of the algorithm. The termination proof
relies on the fact that either $\mathbf{b}(u^{(\ell)})$ tends
to infinity, or $\mathbf{t}(u^{(\ell)})$ tends to $-\infty$.
Since $\|\tilde u^{(\ell)} - u^{(\ell)}\| \leq \varepsilon$,
the analogous property is still satisfied by 
$\tilde u^{(\ell)}$, and so, the algorithm 
implemented with finite precision does terminate.

Now, if $\ell^*$ is the step at which
the procedure terminates, we have 
either $\mathbf{b}(\tilde u^{({\ell^*})})>\varepsilon$
or $\mathbf{t}(\tilde u^{({\ell^*})})<-\varepsilon$,
which entails that either 
$\mathbf{b}(u^{({\ell^*})})>0$
or $\mathbf{t}(u^{({\ell^*})})<0$. Reasoning
as above, we deduce that the mean payoff
of the game is positive in the first situation
and negative in the second one. Hence,
the algorithm still decides correctly the
triviality of the spectrahedron. 
\end{proof}

\begin{remark}
The situation in which the mean payoff is zero is degenerate: then, an infinitesimal perturbation of the entries of the matrices can make the spectrahedron trivial or nontrivial.
Value iteration cannot naturally handle such degenerate
situations, which can be solved by different methods, like the policy
iteration algorithm presented in~\cite{cdcdetournay}, which is based on the solution
of a finite sequence of linear systems, and can be implemented
in exact arithmetic. 
\end{remark}
\begin{remark}
Every iteration (while loop) of the procedure {\sc CheckFeasibility} takes a time $O(nm^2)$, which is {\em linear} in the size of the input.
The number of iterations can only be bounded
by an exponential in the size of the input. However,
our benchmarks indicate that the algorithm is fast when the instance is ``far from being degenerate''.
\end{remark}
\begin{remark}\label{rk-recession}
The condition that all the initial states have the same mean payoff
can be checked by appealing to Theorem~\ref{th-ergodicity}
involving the recession function $\hat{\shapley}$ of the Shapley
operator. For instance, suppose that $\shapley$ arises
from a Metzler tropical spectrahedron given by matrices whose
entries $Q_{ij}^{(k)}$ are all finite.
Then, one  can verify that the $k$-th coordinate map of $\hat{\shapley}$
is given by
\[
\hat{\shapley}_k(x)= \min_{i<j} \frac{1}{2} \bigl(\max_{Q^{(l)}_{i i} \in \postrop} x_l
 + \max_{Q^{(l)}_{j j} \in \postrop}  x_l\bigr) \, ,	
\]
and is therefore independent of the choice of $k\in [n]$. 
Then, it follows from Theorem~\ref{th-ergodicity} that $\hat{\shapley}$ as only fixed points of the form $(\alpha,\dots,\alpha)$, which implies that the first condition of Theorem~\ref{prop-algo} is satisfied.
\end{remark}
\begin{remark}
By Theorem~\ref{theorem:generic_metzler}, the procedure 
{\sc CheckFeasibility} allows us to verify the feasibility
of a Metzler nonarchimedean spectrahedron if the entries
of the matrices $Q_{ij}^{(k)}$ are finite and generic.
\end{remark}

\begin{example}
Procedure {\sc CheckFeasibility} applied to the nonarchimedean spectrahedron of Section~\ref{section:illustration}, with $\varepsilon=10^{-8}$,
terminates in 20 iterations. It returns a vector having floating point entries \[(1.05612\dots,0.0204082\dots,1.12755\dots) \, .\] 
When converted into a vector with rational entries, it reads
\[
v \coloneqq \Bigl(1107425 \times 2^{-20}, 42799 \times 2^{-21}, 4729289 \times 2^{-22} \Bigr) \, .
\]
We have checked using exact precision arithmetic over rationals (provided by the GNU multiple precision arithmetic library, \url{https://gmplib.org/}) that this vector lies in the interior of the tropical spectrahedron shown in Figure~\ref{fig:tropical_spectrahedron}, \ie, $v_i < \shapley(v_i)$ for $i = 1,2,3$.  Following Lemma~\ref{lemma:lift}, the vector $\bo x \coloneqq (t^{v_1},t^{v_2},t^{v_3})$ fulfills $\bo Q(\bo x)\succeq 0$.
\end{example}

\begin{figure}
\begin{center}
\includegraphics{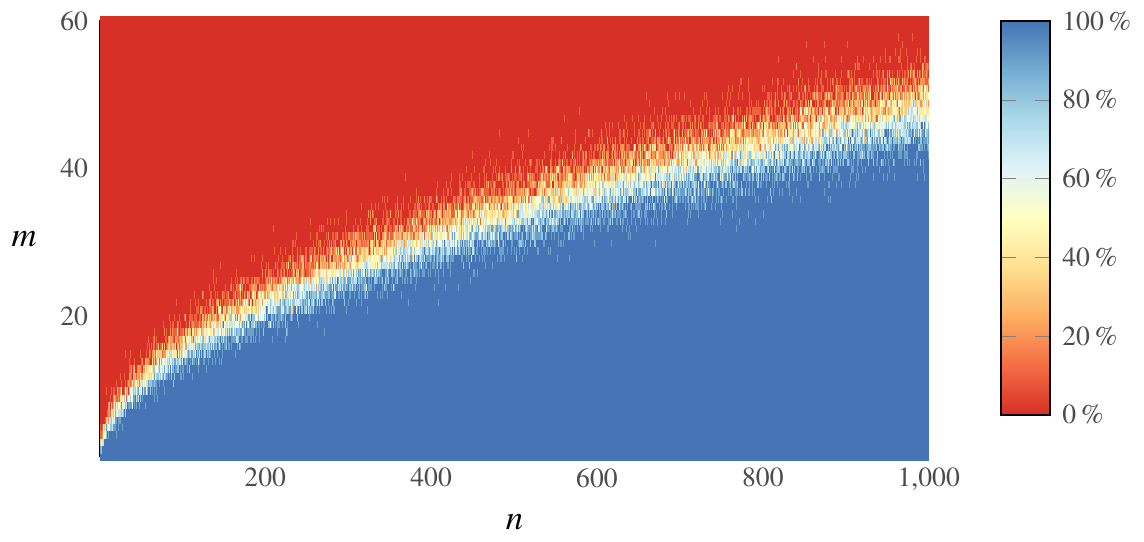}
\end{center}
\caption{Phase transition between feasibility and infeasibility of spectrahedra. For each $(n,m)$, the color scheme reports the ratio of feasible instances among $10$ samples.}\label{fig:phase_transition}
\end{figure}

We report in Table~\ref{table:bench} experimental results for
different values of $(n,m)$.
We chose all the $|Q^{(k)}_{ij}|$, for $i\leq j$, to be independent 
random variables uniformly distributed on $[0,1]$.
Moreover, the diagonal coefficients $Q^{(k)}_{ii}$
were chosen to have a positive tropical sign (they belong to $\postrop$). 
We took $\varepsilon=10^{-8}$ (the performance was similar for $\varepsilon=10^{-6}$ or $\varepsilon=10^{-10}$). Our experiments were obtained using a C program,
distributed 
as an ancillary file attached to this arXiv manuscript 
for reproducibility purposes.\footnote{The ancillary file can be downloaded from \url{http://arxiv.org/src/1603.06916/anc}.}
This program was compiled under Linux with gcc -O3, and executed on a single core of an Intel(R) i7-4600U CPU at 2.10~GHz with 16~GB RAM. We report the average execution
time over 10 samples for every value of $(n,m)$.
The number of iterations did not exceed 731 on this benchmark, and,
for most $(n,m)$, it was limited to a few units.
Indeed, random instances
exhibit experimentally a phase transition, as shown in Figure~\ref{fig:phase_transition}: for a given $(n,m)$,
the system is either feasible
with overwhelming probability, 
or infeasible
with overwhelming probability, 
unless $(n,m)$ lies in a tiny region of the parameter space. 
Value iteration quickly decides feasibility,
except in regions close to the phase transition. This 
explains why the execution time does not increase
monotonically with $(n,m)$ in our experiments (we included
both easy and hard values of $(n,m)$). 

\begin{table}
\small
\begin{center}
\begin{tabular}{@{}c@{\hskip 10pt}c@{\hskip 10pt}c@{\hskip 10pt}c@{\hskip 10pt}c@{\hskip 10pt}c@{}}
$(n,m)$&$(50,10)$&$(  50, 40)$
&$( 50, 50)$&$( 50, 100)$&$( 50, 1000)$\\
time& 
0.000065   &
0.000049  &
0.000077  &
0.000279  &
0.026802
\\\hline 
$(n,m)$&$( 100,10)$& $(100,15)$ & $( 100,80)$
&$( 100,100)$&$( 100,1000)$\\
time&   0.000025    & 0.000270  & 0.000366    
    &0.000656   &   0.053944
\\\hline 
$(n,m)$&$( 1000, 10)$&$( 1000,50)$&$( 1000,100)$&$( 1000, 200)$&$( 1000, 500)$\\
time   &  0.000233   & 0.073544     & 0.015305& 0.027762 & 0.148714\\\hline 
$(n,m)$&$( 2000,10)$&$(2000,70)$&$( 2000, 100)$ & $(10000,150)$& $(10000,400)$\\
time  & 0.000487   & 1.852221  &0.087536 &19.919844 & 2.309174
\end{tabular}
\end{center}
\vspace*{0.1cm}
\caption{Execution time (in sec.) of Procedure {\sc CheckFeasibility}
on random instances.} \label{table:bench}
\end{table}

\section{Equivalent forms of stochastic mean payoff games problem}\label{sec:game_equivalence}

In this section we present the (algorithmically) equivalent forms of stochastic mean payoff games, as mentioned in Remark~\ref{remark:general_games}. In order to do that, we need to introduce the notions of \emph{simple stochastic games} and \emph{stopping games}.

We start by defining the class of stopping games. We say that a pair of states $(i, k) \in [m] \times [n]$ is a \emph{sink} if, when $i$ or $k$ is reached, the game loops forever between these two states. More formally, we have $A^{(k)} = \{ \{i\}\}, B^{(i)} = \{\{k \} \}$. We say that the game is \emph{stopping} if it has at least one sink and the probability that the game will reach a sink is equal one for every choice of policies $(\sigma, \tau)$ and every initial state. Note that if a game is stopping, then only the payoffs in sinks are important to determine its solution. Indeed, if we denote the sinks by $(i_{1}, k_{1}), \dots, (i_{p}, k_{p})$ and the game is stopping, then the payoff of Player Max is given by
\[
g_{k}(\sigma, \tau) = \sum_{s = 1}^{p}(r^{\{i_s\}}_{k_s} + r^{\{k_s\}}_{i_i})\mu_{s}(k, \sigma, \tau),
\]
where $\mu_{s}(k, \sigma, \tau)$ is the probability that the game starting from $k$ reaches the sink $(i_{s}, k_{s})$ if the players use the policies $(\sigma, \tau)$. This expression depends only on payoffs in sinks.

Now, we introduce the class of simple stochastic games. We say that the stochastic game is \emph{simple} if its set of states can be divided into three classes: states controlled by Player Min, Player Max, and Nature. Players Min and Max have only deterministic choices and Nature chooses the next state by tossing a coin. (To be coherent with the previous definition of stochastic game, we assume that Player Min controls the states of Nature --- but she has no other choice than to toss a coin.) Formally, we suppose that for every $k \in [n]$ and every $a \in A^{(k)}$ we have the implication $\abs{a} = 2 \implies \abs{A^{(k)}} = 1$. Moreover, a simple game has two sinks: one with payoff $1$ and the other with payoff $0$. All other payoffs are equal to $0$. 

It may seem that solving simple games is indeed simpler that solving games in their full generality. 
Andersson and Miltersen~\cite{andersson_miltersen} have shown that this is not the case. Let \Smpgcomp{} denote the problem of finding the value and a pair of optimal policies in a stochastic mean payoff game.
\begin{theorem}[\cite{andersson_miltersen}]\label{theorem:smpg_to_ssg_val}
\Smpgcomp{} is poly-time equivalent to the problem of finding the values of stopping simple stochastic games.
\end{theorem}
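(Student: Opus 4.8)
The plan is to establish the two Turing reductions separately, following Andersson and Miltersen~\cite{andersson_miltersen}. The \emph{easy direction} is the reduction from the value problem for stopping simple stochastic games to \Smpgcomp. A stopping simple stochastic game is, after a trivial relabeling, a stochastic mean payoff game in the sense of Remark~\ref{remark:general_games}: put reward $1$ on the self-loop of the ``win'' sink and $0$ on every other action. Since the game is stopping, for every pair of policies and every initial state the trajectory almost surely enters a sink and then loops there forever; hence the Cesàro limit in~\eqref{eq:average_payoff} equals the reward of the sink eventually reached, and the mean payoff value at a state $k$ equals the probability that optimal play reaches the win sink, that is, the value of the simple game at $k$. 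Thus a single call to an oracle for \Smpgcomp{} returns, in particular, the vector of values of the stopping simple game.

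For the \emph{converse direction} I would reduce \Smpgcomp{} to repeated calls of an oracle computing values of stopping simple stochastic games, via a discounted intermediary. The discounted value $\gameval^{\beta}$ of the game with discount factor $\beta \in (0,1)$ is the unique fixed point of the operator obtained from~\eqref{e-def-shapley} by multiplying each occurrence of $x_l$ by $\beta$, and it is computed by a stopping simple stochastic game built by the classical gadget: rescale the rewards into $[0,1]$ by the probabilistic binary-expansion gadget of Zwick and Paterson; then at every move stop the play with probability $1-\beta$ and route it to a sink whose identity (win or lose) is chosen with a probability encoding the scaled accumulated reward. This construction has polynomial size as soon as $1-\beta$ and the rewards are given with polynomially many bits. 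The point is then to pick $\beta$ close enough to $1$ so that simultaneously: (i) any stationary policy optimal in the discounted game is optimal in the mean payoff game (Blackwell optimality), and (ii) the quantity $(1-\beta)\gameval^{\beta}$ lies within half the least gap between the possible entries of $\gameval$, so that it can be rounded to recover $\gameval$ exactly. Both facts hold for $\beta$ such that $\log\frac{1}{1-\beta}$ is polynomial in the input size, which follows from Cramer's rule and Hadamard-type determinant bounds applied to the linear systems defining the value of the $0$-player games obtained by fixing both players' policies. Finally, to output an actual pair of optimal policies, I would determine Player Max's optimal actions greedily: for each state $i \in [m]$ and each action $b \in B^{(i)}$, form the game in which Max is forced to play $b$ at $i$, run the reduction above, and keep an action for which the returned value equals $\gameval_i$; this costs polynomially many oracle calls. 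Once $\overbar{\tau}$ is fixed, the residual $1$-player game is a Markov decision process, whose value and an optimal $\overbar{\sigma}$ are computed directly by linear programming~\cite[Section~2.9]{filar_vrieze}, which also re-delivers $\gameval$.

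The hard part will be the quantitative estimate in the converse direction: proving that a discount factor with only polynomially many bits already lies beyond the Blackwell threshold and makes $(1-\beta)\gameval^{\beta}$ round to $\gameval$. This requires explicit a priori bounds — through determinants of the transition-and-reward matrices of fixed-policy games — on both the denominators of the rational vector $\gameval$ and the distance between $\gameval$ and the discounted value; the remainder of the argument is a matter of assembling standard gadgets and accounting for their polynomial blow-up. A secondary technical point is to verify that the reward-rescaling gadget preserves both the value and the stopping property of the game.
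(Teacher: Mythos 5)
The paper does not actually prove this statement---it is quoted from Andersson and Miltersen \cite{andersson_miltersen}---so the only meaningful comparison is with the argument of that reference, and your outline follows essentially the same route: for the easy direction, view a stopping simple game as a mean payoff game whose value at each state is (an affine image of) the probability of reaching the win sink; for the converse, pass through a discounted game whose discount factor has polynomially many bits, convert it to a stopping simple stochastic game by the Zwick--Paterson gadget (stopping probability $1-\beta$ plus binary-expansion encoding of the rescaled rewards into sink probabilities), and recover the exact mean payoff vector by rounding, using denominator bounds obtained from Cramer's rule on the fixed-policy linear systems. The quantitative estimates you defer---the Blackwell threshold and the rounding gap---are precisely the technical core of \cite{andersson_miltersen} and \cite{zwick_paterson}, and they go through as you describe.

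One step would fail as written, however. After extracting an optimal $\overbar{\tau}$ for Player Max, you propose to obtain $\overbar{\sigma}$ as an optimal policy of the residual one-player game, i.e., as a best response to $\overbar{\tau}$. A best response to an optimal policy need not itself be optimal: it only guarantees $g_k(\overbar{\sigma}, \overbar{\tau}) = \gameval_k$, not $g_k(\overbar{\sigma}, \tau) \le \gameval_k$ for every $\tau$, so the returned pair might not meet the specification of \Smpgcomp{}. The fix is simply to run your greedy action-fixing symmetrically for Player Min. Relatedly, in that greedy extraction you should test whether fixing an action preserves the \emph{whole} value vector rather than only $\gameval_i$: an action that preserves $\gameval_i$ but degrades the value at another state can derail later iterations, since the optimal policies one is after must be uniformly optimal over all initial states.
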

Note that this theorem was originally 
without the word ``stopping'', but this is what the authors actually showed in the latter reference. 
As already mentioned in Remark~\ref{remark:general_games}, this result is valid for a much wider class of games than those considered in this work. 
Andersson and Miltersen
defined stopping simple stochastic games in a slightly more general way --- in their version, the players can make multiple moves in a row. Nevertheless, observe that we can always add dummy states to a stopping simple stochastic game (i.e., states in which player has only one action) and obtain an equivalent game that belongs to the class considered here. This shows that, from the algorithmic point of view, these classes are equivalent.

Moreover, observe that if the game is both simple and stopping, then we can change its payoffs in sinks --- instead of payoffs equal to $0$ and $1$, we can demand them to be equal to $-1$ and $1$. This does not change the optimal policies of the game and acts as an affine transformation on the value vector. Henceforth, we assume that payoffs in sinks of stopping simple stochastic games are equal to $-1$ and $1$. We now show that the computational problem of finding values of simple games can be reduced to the decision problem. By \Smpgi{} we will denote the problem of deciding if a given state $k \in [n]$ is winning in the stochastic mean payoff game.
\begin{lemma}\label{lemma:computation_to_decision}
\Smpgi{} restricted to stopping simple stochastic games is poly-time equivalent to \Smpgcomp{}.
\end{lemma}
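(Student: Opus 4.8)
The plan is to prove the two required polynomial-time reductions separately, using Theorem~\ref{theorem:smpg_to_ssg_val} as a black box to pass between \Smpgcomp{} and the task of computing the value vector of a stopping simple stochastic game. The easy direction is immediate: given a stopping simple game and a state $k$, an oracle for \Smpgcomp{} returns the value vector $\gameval$, and we answer ``yes'' to the \Smpgi{} instance precisely when $\gameval_k \ge 0$; composing with Theorem~\ref{theorem:smpg_to_ssg_val} shows that \Smpgi{} restricted to stopping simple games reduces in polynomial time to \Smpgcomp{}.

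For the converse it suffices, again by Theorem~\ref{theorem:smpg_to_ssg_val}, to compute the value vector $\gameval\in[-1,1]^n$ of a stopping simple game $G$ (with sink payoffs $-1$ and $1$) using only an oracle that decides, for any stopping simple game and any state, whether that state is winning. Fix a state $v$ of $G$; I will compute $\gameval_v$ by binary search. For a dyadic rational $c\in[-1,1]$ of bit-length $j$, build an auxiliary game $G_c$ as follows: keep $G$ together with its two sinks, adjoin a \emph{constant gadget} realizing a fresh state $z_c$ of value exactly $c$, and adjoin a fresh coin-toss state $v'$ from which the play proceeds with probability $\tfrac12$ from $v$ and with probability $\tfrac12$ from $z_c$ (inserting the dummy states needed to respect the alternating move structure, which is harmless as already observed for stopping simple games). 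Since the value of a coin-toss state is the average of the values of its successors, $\gameval_{v'}=\tfrac12(\gameval_v+c)$, so $v'$ is winning in $G_c$ if and only if $\gameval_v\ge -c$; thus one oracle call on $G_c$ tests $\gameval_v$ against an arbitrary dyadic threshold $\theta=-c\in[-1,1]$.

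The constant gadget must have size polynomial in $j$, so it is built as a chain $z_c=w_1,w_2,\dots,w_j$ of coin-toss states encoding $q\coloneqq\tfrac{c+1}{2}\in[0,1]$ in binary: from $w_i$, with probability $\tfrac12$ the play is sent to the sink selected by the $i$-th binary digit of $q$ (the $1$-sink if the digit is $1$, the $-1$-sink otherwise), and with probability $\tfrac12$ it moves on to $w_{i+1}$, with a terminating convention at $w_j$ (its second branch going to the $-1$-sink). A direct absorption-probability computation gives that $w_1$ is absorbed in the $1$-sink with probability $q$, hence $\gameval_{w_1}=2q-1=c$, and the gadget uses only $O(j)$ states. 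Every added state has a single action and reaches a sink with probability $1$, so $G_c$ is again a stopping simple game, of size $|G|+O(j)$.

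It remains to bound the number of oracle calls. The value vector of a stopping simple game is the unique solution of a linear system with coefficients in $\{0,\pm\tfrac12,\pm1\}$; hence by Cramer's rule each entry $\gameval_v$ is a rational number with numerator and denominator of bit-length $N=O((m+n)\log(m+n))$. Running a binary search over dyadic thresholds of bit-length up to $2N+2$ --- each step a single oracle call on a game of size $|G|+O(N)$ --- localizes $\gameval_v$ in an interval of length less than $2^{-(2N+1)}$, which contains a unique rational of denominator at most $2^N$; that rational is $\gameval_v$ and is recovered by continued-fraction rounding. Repeating over all states of $G$ yields the whole value vector with polynomially many oracle calls, i.e.\ a polynomial-time Turing reduction; chaining with Theorem~\ref{theorem:smpg_to_ssg_val} gives \Smpgcomp{} $\le_p$ \Smpgi{} restricted to stopping simple games, completing the equivalence. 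The only delicate point is the size of the threshold gadget: realizing a dyadic constant of bit-length $j$ as a balanced binary tree of coin tosses would use $2^j$ leaves and make the reduction exponential, so it is essential to use the $O(j)$-state binary-expansion chain above; the remaining ingredients --- the averaging identity at a coin-toss state, the Cramer bound on denominators, and the rounding step --- are routine.
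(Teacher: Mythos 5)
Your proof is correct, and it follows the same overall strategy as the paper's --- reduce \Smpgcomp{} to a threshold oracle ``is $\gameval_k\ge\alpha$?'' on stopping simple games, then recover the exact rational value from polynomially many threshold queries --- but the two key ingredients are implemented differently. For the threshold test, the paper redirects the $+1$ sink so that it leads to the $-1$ sink with probability $1-\frac{1}{\alpha+1}$ and to a fresh $+1$ sink with probability $\frac{1}{\alpha+1}$, and then invokes the Zwick--Paterson construction to turn that arbitrary rational transition probability back into a simple game; you instead build a self-contained constant gadget (an $O(j)$-state binary-expansion chain of coin tosses realizing a state of value exactly $c$) and average it with the query state, which avoids the external citation and makes the ``keep the gadget polynomial, not a $2^j$-leaf tree'' issue explicit --- a point the paper leaves implicit inside Zwick--Paterson. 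For the search, the paper cites Auger--Strozecki/Condon for the denominator bound and Kwek--Mehlhorn for the rational search, whereas you use a Cramer/Hadamard bound plus plain binary search and continued-fraction rounding; both give polynomially many queries. The one imprecision worth fixing: the value vector of a stopping simple game is \emph{not} literally ``the unique solution of a linear system'' (it solves a piecewise-linear min/max fixed-point equation). The standard repair is one sentence --- fix a pair of optimal positional policies, observe that the value vector then solves the linear absorption system of the induced Markov chain, whose coefficients lie in $\{0,\pm\tfrac12,\pm1\}$, and apply Cramer's rule to that system --- after which your denominator bound and the rest of the argument go through.
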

\begin{proof}
It is obvious that \Smpgi{} can be reduced to \Smpgcomp. We will show the opposite reduction. By Theorem~\ref{theorem:smpg_to_ssg_val},  \Smpgcomp{} is poly-time reducible to the problem of finding values of stopping simple stochastic games. Fix such a game and let $\gameval \in [-1,1]^{n}$ denote its value.

First, we show an auxiliary reduction. Fix a rational number $\alpha \in [-1, 1] \cap \Q$ and an initial state $k \in [n]$. Suppose that we want to decide if $\gameval_{k} \ge \alpha$. We will show that this is poly-time reducible (poly-time in the size of the game and the number of bits needed to encode $\alpha$) to \Smpgi. If $\alpha = -1$, then the answer is ``yes''. If $\alpha > -1$, then we may modify the game as follows: we suppose that when the sink with payoff $1$ is reached, the game does not start to loop, but instead moves with probability $1 - \frac{1}{\alpha+1}$ to the sink with payoff $-1$ and with probability $\frac{1}{\alpha+1}$ to the (newly created) sink with payoff $1$. Denote the value of the modified game by $\tilde{\gameval}$. We have $\tilde{\gameval}_{k} \ge 0 \iff \gameval_{k} \ge \alpha$. The modified game is stopping but not simple. Nevertheless, we may apply the construction of Zwick and Paterson~\cite[remarks preceding Theorem 6.1]{zwick_paterson} and obtain (in poly-time) a new game, which is stopping, simple, and has $\tilde{\gameval}_{k}$ as the value at state $k$. This gives the auxiliary reduction. 

Second, \cite[Lemma~1]{auger_strozecki} (which improves \cite[Lemma~2]{condon}) shows that $\gameval_{k}$ is a rational number of form $a/b \in [-1, 1]$, where $a,b$ are integers and $0 \le b \le 3^n$. Thus, $\gameval_{k}$ can be found by the Kwek--Mehlhorn algorithm~\cite{kwek_mehlhorn}, using polynomially many queries to the oracle given by our auxiliary reduction.
\end{proof}

\begin{figure*}[t]
\begin{center}
\begin{minipage}{0.45\textwidth}
\centering
\begin{tikzpicture}[scale=0.9,>=stealth',row/.style={draw,circle,minimum size=0.5cm},col/.style={draw,rectangle,minimum size=0.5cm},av/.style={draw, circle,fill, inner sep = 0pt,minimum size = 0.2cm}]
\node[row] (i1) at (2.25,1.25) {$k_{0}$};
\node[row] (i2) at (6,0) {$2$};
\node[row] (i3) at (6,2.5) {$3$};

\node[col] (j2) at (4,1.25) {$1$};
\node[col] (j3) at (8,2.5) {$3$};
\node[col] (j4) at (8,0) {$2$};

\draw[->] (i1) to node[above left=0ex and -0.5ex, font=\small] {$0$} (j2);
\draw[->] (i3) to[out=20, in=160] node[above left=0ex and -0.5ex, font=\small] {$0$} (j3);
\draw[->] (j3) to[out=-160, in=-20] node[below right=0ex and -0.5ex, font=\small] {$1$} (i3);
\draw[->] (i2) to[out=20, in=160] node[above left=0ex and -0.5ex, font=\small] {$0$} (j4);
\draw[->] (j4) to[out=-160, in=-20] node[below right=0ex and -0.5ex, font=\small] {$-1$} (i2);

\draw[->] (i1) to[out = 60, in =100] node[above left=0ex and -0.5ex, font=\small] {$0$} (j3);

\draw[->] (j2) to node[below left=0ex and -0.5ex, font=\small] {$0$} (i2);
\draw[->] (j2) to node[above left=0ex and -0.5ex, font=\small] {$0$} (i3);

\end{tikzpicture}
\end{minipage}\hfill
\begin{minipage}{0.54\textwidth}
\begin{center}
\begin{tikzpicture}[scale=0.9,>=stealth',row/.style={draw,circle,minimum size=0.5cm},col/.style={draw,rectangle,minimum size=0.5cm},av/.style={draw, circle,fill, inner sep = 0pt,minimum size = 0.2cm}]
\node[row] (i1) at (2.25,1.25) {$k_{0}$};
\node[row] (i2) at (6,0) {$2$};
\node[row] (i3) at (6,2.5) {$3$};

\node[row] (i4) at (7,1.25) {$4$};

\node[col] (j2) at (4,1.25) {$1$};
\node[col] (j3) at (8,2.5) {$3$};
\node[col] (j4) at (8,0) {$2$};

\node[col] (j5) at (9.5,1.25) {$4$};

\draw[->] (i1) to node[above left=0ex and -0.5ex, font=\small] {$0$} (j2);
\draw[->] (i3) to node[above left=0ex and -0.5ex, font=\small] {$0$} (j3);
\draw[->] (j3) to node[right=0ex and 0.5ex, font=\small] {$1$} (i4);
\draw[->] (i2) to node[above left=0ex and -0.5ex, font=\small] {$0$} (j4);
\draw[->] (j4) to node[right=0ex and 0.1ex, font=\small] {$-1$} (i4);

\draw[->] (i1) to[out = 60, in =100] node[above left=0ex and -0.5ex, font=\small] {$0$} (j3);

\draw[->] (j2) to node[below left=0ex and -0.5ex, font=\small] {$0$} (i2);
\draw[->] (j2) to node[above left=0ex and -0.5ex, font=\small] {$0$} (i3);

\draw[->] (i4) to node[above right=0ex and -0.8ex, font=\small] {$0$} (j5);

\draw[->] (j5) to[out = -60, in =-100] node[below left=0ex and -0.5ex, font=\small] {$0$} (i1);

\end{tikzpicture}
\end{center}
\end{minipage}
\end{center}
\caption{Transformation of stopping simple stochastic games. Circle states are controlled by Player Min. Numbers indicate payoffs received by Player Max after each move.}\label{fig:transforming_stopping_game}
\end{figure*}

Finally, we want to show that \Smpgi{} is poly-time reducible to \Smpg. This requires an auxiliary construction which is presented in Figure~\ref{fig:transforming_stopping_game}. We take a stopping simple stochastic game $\Gamma$, fix an initial state $k_{0} \in [n]$ and suppose that the sinks of $\Gamma$ are indexed as $(n - 1, m - 1)$ (sink with payoff $-1$) and $(n, m)$ (sink with payoff $1$). Now, we modify the game as follows: we add two states, $n + 1$ (controlled by Player Min) and $m + 1$ (controlled by Player Max). At $n + 1$, Player Min has only one possible action: to go to $m + 1$; after this action Player Min pays $0$ to Player Max. Moreover, at $m + 1$ Player Max also has only one action: to go to $k_{0}$; after this action Player Max receives $0$ from Player Min. Finally, we modify the sinks of $\Gamma$ as follows: at $m - 1$ (resp.\ $m$) Player Max has only one possible action: to go to $n + 1$; after this action he receives $-1$ (resp.\ $1$) from Player Min. Denote the modified game by $\overbar{\Gamma}$. By construction, it is quite intuitive that the value of $\overbar{\Gamma}$ does not depend on the initial state and that the state $k_{0}$ is winning in $\overbar{\Gamma}$ if and only if it is winning in $\Gamma$. To prove this formally, we use Theorem~\ref{theorem:characterization_of_payoff}. Henceforth, by $\Gamma^{\sigma, \tau}$ we denote the $0$-player game obtained from $\Gamma$ by fixing a pair of policies $(\sigma, \tau)$.
\begin{corollary}\label{corollary:same_payoff}
The state $k_{0}$ is winning in $\Gamma$ if and only if it is winning in $\overbar{\Gamma}$. Moreover, the value of $\overbar{\Gamma}$ is independent of the initial state.
\end{corollary}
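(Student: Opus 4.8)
The plan is to reduce both assertions to the standard ``winning $\Leftrightarrow$ guaranteeing policy'' characterization coming from Theorem~\ref{th:optimal_policies}, together with the ergodic formula of Theorem~\ref{theorem:characterization_of_payoff} for the payoff of a $0$-player game. I would first record the obvious bijection between policy pairs of $\Gamma$ and of $\overbar{\Gamma}$: in $\overbar{\Gamma}$ the states $n+1$, $m+1$ and the two former sink states $m-1$, $m$ carry a single admissible action, and in $\Gamma$ the states $m-1,m$ (and $n-1,n$) already did, so a policy pair $(\sigma,\tau)$ of $\Gamma$ corresponds to a unique policy pair of $\overbar{\Gamma}$, still denoted $(\sigma,\tau)$, and vice versa. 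The induced $0$-player Markov chains $\Gamma^{\sigma,\tau}$ and $\overbar{\Gamma}^{\sigma,\tau}$ agree on the common states, the sole change being that in $\overbar{\Gamma}$ the former sinks, rather than looping, move deterministically along $m-1\to n+1\to m+1\to k_{0}$ and $m\to n+1\to m+1\to k_{0}$. Since $\Gamma$ is stopping, $\Gamma^{\sigma,\tau}$ is absorbed into one of its two sinks with probability one, and in finite expected time, from every initial state; hence $\overbar{\Gamma}^{\sigma,\tau}$ returns to $k_{0}$ almost surely from every state, so it has a single recurrent class, the one containing $k_{0}$. By Theorem~\ref{theorem:characterization_of_payoff} the payoff $g_{k}(\sigma,\tau)$ in $\overbar{\Gamma}$ is thus the same for all initial states $k$; I denote it $h(\sigma,\tau)$. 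In particular, applying this to an optimal pair $(\overbar{\sigma},\overbar{\tau})$ of $\overbar{\Gamma}$ (which exists by Theorem~\ref{th:optimal_policies}) and using $\overbar{\gameval}_{k}=g_{k}(\overbar{\sigma},\overbar{\tau})$, the value of $\overbar{\Gamma}$ is independent of the initial state --- the second assertion.

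For the winning equivalence, I would compare $h(\sigma,\tau)$ with the payoff $g_{k_{0}}(\sigma,\tau)$ in $\Gamma$. Write $\mu_{+}(\sigma,\tau)$ and $\mu_{-}(\sigma,\tau)$ for the probabilities that $\Gamma^{\sigma,\tau}$ started at $k_{0}$ is absorbed into the sink of payoff $+1$, resp.\ $-1$, so $\mu_{+}+\mu_{-}=1$. By the payoff formula for stopping $0$-player games recalled at the beginning of this section, $g_{k_{0}}(\sigma,\tau)=C_{1}\bigl(\mu_{+}(\sigma,\tau)-\mu_{-}(\sigma,\tau)\bigr)$ for a fixed $C_{1}>0$. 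In $\overbar{\Gamma}^{\sigma,\tau}$ all one-step rewards vanish except at the Max-moves leaving $m-1$ and $m$ (rewards $-1$ and $+1$), and between two consecutive visits to $k_{0}$ the play visits exactly one of $m-1,m$ exactly once, with conditional probabilities $\mu_{-}(\sigma,\tau)$ and $\mu_{+}(\sigma,\tau)$. Unfolding~\eqref{eq:average_payoff} over the excursions from $k_{0}$ (renewal--reward), this gives $h(\sigma,\tau)=C_{2}(\sigma,\tau)\bigl(\mu_{+}(\sigma,\tau)-\mu_{-}(\sigma,\tau)\bigr)$ with $C_{2}(\sigma,\tau)>0$ (a positive multiple of the reciprocal of the expected excursion length, which is finite and positive by the first paragraph). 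Hence $h(\sigma,\tau)\ge 0 \iff g_{k_{0}}(\sigma,\tau)\ge 0$ for every policy pair.

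Now I would finish as follows. By Theorem~\ref{th:optimal_policies}, a state $k$ is winning in a game exactly when Player Max has a policy $\tau^{\star}$ with $g_{k}(\sigma,\tau^{\star})\ge 0$ for every $\sigma$. If $k_{0}$ is winning in $\Gamma$, pick such a $\tau^{\star}$; its counterpart in $\overbar{\Gamma}$ satisfies $h(\sigma,\tau^{\star})\ge 0$, i.e.\ $g_{k_{0}}(\sigma,\tau^{\star})\ge 0$ in $\overbar{\Gamma}$, for all $\sigma$, so $k_{0}$ is winning in $\overbar{\Gamma}$. Conversely, if $k_{0}$ is winning in $\overbar{\Gamma}$, then since the value of $\overbar{\Gamma}$ is constant there is a Max policy $\tau^{\star}$ with $g_{k}(\sigma,\tau^{\star})\ge 0$ for all $k$ and $\sigma$; its counterpart in $\Gamma$ then satisfies $g_{k_{0}}(\sigma,\tau^{\star})\ge 0$ for all $\sigma$ by the previous paragraph, so $k_{0}$ is winning in $\Gamma$.

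The main obstacle is the comparison of the second paragraph: turning the picture ``each excursion from $k_{0}$ visits exactly one former sink'' into the rigorous identity $h(\sigma,\tau)=C_{2}(\sigma,\tau)(\mu_{+}-\mu_{-})$ with $C_{2}(\sigma,\tau)>0$. This needs the ergodic formula of Theorem~\ref{theorem:characterization_of_payoff} together with the positivity of the stationary probability of $k_{0}$, which is exactly the point that $k_{0}$ lies in the recurrent class --- established in the first paragraph from the stopping hypothesis on $\Gamma$. Everything else (the policy bijection, the ``winning $\Leftrightarrow$ guaranteeing policy'' translation, the stopping-game payoff formula) is routine.
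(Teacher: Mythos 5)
Your proposal is correct and follows essentially the same route as the paper's proof: the policy bijection, the observation that the stopping property of $\Gamma$ forces $\overbar{\Gamma}^{\sigma,\tau}$ to have a single recurrent class containing $k_{0}$ (whence the constant value via Theorem~\ref{theorem:characterization_of_payoff}), and the sign-preserving identity relating the two payoffs, which the paper states directly as $\overbar{g}_{k_{0}}(\sigma,\tau)=g_{k_{0}}(\sigma,\tau)/\theta_{k_{0}}$ and which your renewal--reward constant $C_{2}(\sigma,\tau)>0$ merely repackages. The concluding transfer of a guaranteeing Max policy between the two games is likewise the paper's argument, with the converse direction spelled out rather than declared analogous.
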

\begin{proof}
First, observe that there exists a natural bijection between policies of $\Gamma$ and $\overbar{\Gamma}$. Hence, we will use the same letters to denote policies in both games. Fix a pair of policies $(\sigma, \tau)$. Let $g(\sigma, \tau)$ (resp.\ $\overbar{g}(\sigma, \tau)$) denote the payoff of Player Max in $\Gamma^{\sigma, \tau}$ (resp.\ in $\overbar{\Gamma}^{\sigma, \tau}$). Since $\Gamma$ was stopping, the $0$-player game $\overbar{\Gamma}^{\sigma, \tau}$ obtained from $\overbar{\Gamma}$ by fixing the policies $(\sigma, \tau)$ has only one recurrent class and $k_{0}$ belongs to this class. By Theorem~\ref{theorem:characterization_of_payoff} we see that $\overbar{g}(\sigma, \tau)$ is constant for all initial states. Since $(\sigma, \tau)$ are arbitrary, this shows that the value of $\overbar{\Gamma}$ does not depend on the choice of initial state. Furthermore, Theorem~\ref{theorem:characterization_of_payoff} shows that $\overbar{g}_{k_{0}}(\sigma, \tau) = g_{k_{0}}(\sigma, \tau)/\theta_{k_{0}}$, where $\theta_{k_{0}} = \theta_{k_{0}}(\sigma, \tau)$ is the expected time of first return to $k_{0}$.

Now, suppose that the value of $\Gamma$ starting from $k_{0}$ is higher or equal than $0$. Let $\tau^{*}$ denote the optimal policy of Player Max in $\Gamma$. For any policy $\sigma$ of Player Min we have $g_{k_{0}}(\sigma, \tau^{*}) \ge 0$ and hence $\overbar{g}_{k_{0}}(\sigma, \tau^{*}) \ge 0$. Therefore, $\tau^{*}$ is a winning (but not necessarily optimal) policy for Player Max in $\overbar{\Gamma}$ starting from $k_{0}$. Thus, the value of $\overbar{\Gamma}$ starting from $k_{0}$ is higher or equal than $0$. The opposite implication is analogous.
\end{proof}

\begin{corollary}\label{cor:comp_to_decision}
\Smpgcomp{} is poly-time equivalent to \Smpg{} restricted to games with payoffs in $\{-1, 0, 1\}$. 
\end{corollary}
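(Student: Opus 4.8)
The plan is to close the loop between the computational problem \Smpgcomp{}, the single-state decision problem \Smpgi{}, and the existence decision problem \Smpg{}, reusing the reductions already established in this section. One direction is immediate: \Smpg{} restricted to games with payoffs in $\{-1,0,1\}$ reduces to \Smpgcomp{}, since from the value vector $\gameval$ of the game one answers ``yes'' precisely when $\max_{k}\gameval_{k}\ge 0$.

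For the converse, I would argue as follows. By Lemma~\ref{lemma:computation_to_decision}, \Smpgcomp{} is poly-time equivalent to \Smpgi{} restricted to stopping simple stochastic games, so it suffices to reduce the latter to \Smpg{} with payoffs in $\{-1,0,1\}$. Given a stopping simple stochastic game $\Gamma$ together with an initial state $k_{0}$ --- recall that, under the normalization adopted above, the two sinks of $\Gamma$ carry payoffs $\pm 1$ and all other payoffs vanish --- apply the construction of Figure~\ref{fig:transforming_stopping_game} to produce $\overbar{\Gamma}$. This construction runs in poly-time, adds only moves with payoff $0$, and retains the sink payoffs $\pm 1$; hence every payoff of $\overbar{\Gamma}$ lies in $\{-1,0,1\}$. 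By Corollary~\ref{corollary:same_payoff}, the value of $\overbar{\Gamma}$ is independent of the initial state and $k_{0}$ is winning in $\overbar{\Gamma}$ if and only if it is winning in $\Gamma$. Constancy of the value over initial states means that some state of $\overbar{\Gamma}$ is winning iff every state is, iff $k_{0}$ is; therefore $\overbar{\Gamma}$ is a ``yes''-instance of \Smpg{} exactly when $k_{0}$ is winning in $\Gamma$. Composing these reductions yields the claimed poly-time equivalence.

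The only point requiring care is the bookkeeping in the last step: checking that the gadget of Figure~\ref{fig:transforming_stopping_game} does not introduce payoffs outside $\{-1,0,1\}$, and that the existence of a winning state in $\overbar{\Gamma}$ is genuinely equivalent to $k_{0}$ being winning in $\Gamma$. Both are direct consequences of the explicit construction and of the constant-value property in Corollary~\ref{corollary:same_payoff}, so no essentially new difficulty arises beyond assembling the already-proven pieces.
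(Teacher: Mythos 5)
Your proof is correct and follows the same route as the paper: the one trivial direction, then Lemma~\ref{lemma:computation_to_decision} composed with the $\overbar{\Gamma}$ construction of Figure~\ref{fig:transforming_stopping_game} and Corollary~\ref{corollary:same_payoff}. The extra bookkeeping you supply (payoffs of $\overbar{\Gamma}$ staying in $\{-1,0,1\}$, and constancy of the value turning ``some state is winning'' into ``$k_0$ is winning'') is exactly what the paper leaves implicit.
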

\begin{proof}
By Lemma~\ref{lemma:computation_to_decision}, \Smpgcomp{} can be reduced to \Smpgi{} restricted to stopping simple stochastic games. By the construction described above and Corollary~\ref{corollary:same_payoff}, \Smpgi{} for stopping simple stochastic games can be reduced to \Smpg{} restricted to games with payoffs in $\{-1, 0, 1\}$. The opposite reduction is trivial.
\end{proof}

\begin{corollary}\label{cor:equivalence_pseudopolynomial}
\Smpg{} restricted to games with payoffs in $\{-1, 0, 1\}$ is poly-time equivalent to \Smpg{} for general games.
\end{corollary}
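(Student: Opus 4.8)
The plan is to derive this corollary directly from Corollary~\ref{cor:comp_to_decision}, using only trivial reductions in each direction. One direction is immediate: a stochastic mean payoff game with payoffs in $\{-1,0,1\}$ is a particular case of a general game, so any algorithm solving \Smpg{} for arbitrary payoffs a fortiori solves it on this restricted class, and the identity map provides the required poly-time reduction.

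For the nontrivial direction, I would first reduce \Smpg{} for general games to \Smpgcomp. This is straightforward: given an arbitrary stochastic mean payoff game, one runs the algorithm for \Smpgcomp{} to obtain the value vector $\gameval \in \R^n$, and then answers ``yes'' if and only if some coordinate $\gameval_k$ is nonnegative. Since $\gameval$ has $n$ entries, each of polynomial bit-size (by the bound on the denominator of the value recalled in the proof of Lemma~\ref{lemma:computation_to_decision}, via \cite{condon,auger_strozecki}), reading off and testing the signs takes polynomial time, so this is a valid poly-time reduction. Composing with Corollary~\ref{cor:comp_to_decision}, which states that \Smpgcomp{} is poly-time equivalent to \Smpg{} restricted to games with payoffs in $\{-1,0,1\}$, we obtain a chain: \Smpg{} for general games reduces in poly-time to \Smpgcomp, which reduces in poly-time to \Smpg{} with payoffs in $\{-1,0,1\}$. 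Together with the trivial reverse reduction, this yields the claimed poly-time equivalence.

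I do not expect any genuine obstacle in this argument; it is pure bookkeeping on top of the results already established in Section~\ref{sec:game_equivalence}. The only point deserving a sentence of care is checking that the reduction from \Smpg{} to \Smpgcomp{} is honestly poly-time, i.e.\ that extracting and testing the signs of the value vector does not cause a size blow-up — which is guaranteed by the polynomial bit-length bound on the value already invoked above.
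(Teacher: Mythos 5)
Your argument is correct and is essentially the paper's own proof: the paper also notes that \Smpg{} trivially reduces to \Smpgcomp{} and then invokes Corollary~\ref{cor:comp_to_decision}, with the reverse direction being the trivial inclusion of the restricted class. You merely spell out the bookkeeping (reading off the signs of the value vector and its polynomial bit-size) that the paper leaves implicit.
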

\begin{proof}
\Smpg{} is trivially reducible to \Smpgcomp{}. Therefore, the claim follows from Corollary~\ref{cor:comp_to_decision}.
\end{proof}

\section{Concluding remarks}

In this paper, we have shown that under a genericity condition on the valuations,
solving feasibility semidefinite problems over the field of Puiseux series reduces
to a well studied class of zero-sum stochastic games. This leads both
to complexity bounds and to algorithms capable experimentally to solve large scale nonarchimedean instances. The interest is also to relate two different
problems which both have unsettled complexities. This is the first
exposition of this approach. 

It would be interesting to relax the current genericity conditions. We believe that finer genericity conditions could involve both the valuations and leading coefficients of the series.

Another interesting question is to use the present approach
to deal with the real case. We already showed in Section~\ref{sec:archimedean} that the nonarchimedean feasibility problem is equivalent to the archimedean one for large values of $t$, with an exponential number of bits. We may ask however whether it is possible to use combinatorial methods to work for smaller values of $t$. 

\section*{Acknowledgments}

We are grateful to the anonymous reviewers for their numerous remarks which helped to improve the presentation of the paper. An abridged version of the present work appeared initially in the ISSAC paper~\cite{issac2016}. We also thank the referees of ISSAC for their detailed comments.\bibliographystyle{alpha}

\appendix

\section{Constructing spectrahedra from mean payoff games}\label{app:equivalence}

Let $\Gamma$ be a stochastic mean payoff game, and let $Q^{(1)}, \dots, Q^{(n)} \in \strop^{m \times m}$ be the matrices as
constructed in the paragraph following Theorem~\ref{theorem:spectra_and_game}. From these matrices, we can define the set $\pertspectra$ as in Definition~\ref{def:sublevel_set}. Denoting by $\shapley$ the Shapley operator of $\Gamma$, we still have:
\begin{lemma}
For any $\lambda \in \R$ we have $\spectra_{\lambda} = \{x \in \trop^{n} \colon \lambda + x \le \shapley(x) \}$.
\end{lemma}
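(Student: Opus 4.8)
The plan is to prove this exactly as Lemma~\ref{lemma:spectrahedron_encodes_shapley} was proved, substituting ``by construction of the game $\Gamma$'' there by ``by construction of the matrices $Q^{(k)}$'' here; equivalently, one may first check that the game built from the $Q^{(k)}$ by the recipe of Section~\ref{sec:tropical_games} has the same Shapley operator as $\Gamma$ and then quote Lemma~\ref{lemma:spectrahedron_encodes_shapley} verbatim. Concretely, I would start from the explicit formula~\eqref{e-def-shapley}: a point $x\in\trop^n$ satisfies $\lambda+x\le\shapley(x)$ if and only if, for every $k\in[n]$ and every action $a=\{i,j\}\in A^{(k)}$ (with the convention $i=j$ for a singleton),
\[
2\bigl(\lambda+x_k-r^a_k\bigr)\ \le\ \max_{b=\{l\}\in B^{(i)}}\bigl(r^b_i+x_l\bigr)\ +\ \max_{b=\{l\}\in B^{(j)}}\bigl(r^b_j+x_l\bigr)\,.
\]

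Next I would translate each ingredient through the construction of the matrices. For $i\neq j$ with $a=\{i,j\}\in A^{(k)}$ one has $-r^a_k=\abs{Q^{(k)}_{ij}}$, and since the off-diagonal entries of $Q^{(k)}$ are tropically negative one gets $Q_{ij}(x)=Q^-_{ij}(x)=\max\{\abs{Q^{(l)}_{ij}}+x_l:\{i,j\}\in A^{(l)}\}$; similarly, at a Max state $i$ the positive diagonal entries $Q^{(l)}_{ii}$ encode exactly the Max actions $\{l\}\in B^{(i)}$ together with their payments $r^{\{l\}}_i=\abs{Q^{(l)}_{ii}}$, so that $\max_{b=\{l\}\in B^{(i)}}(r^b_i+x_l)=Q^+_{ii}(x)$. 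Substituting, the displayed inequality for a two-element action is precisely $Q^+_{ii}(x)\tdot Q^+_{jj}(x)\ge(\lambda\tdot Q_{ij}(x))^{\tdot 2}$ (the quantification over all Min states $k$ owning the action $\{i,j\}$ being repackaged into the single term $Q_{ij}(x)=Q^-_{ij}(x)$), and for a singleton action $\{i\}$ it reads $Q^+_{ii}(x)\ge\lambda\tdot Q^-_{ii}(x)$. These are exactly the constraints of Definition~\ref{def:sublevel_set}, so the two sets coincide.

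The one step that does not reduce to copying Lemma~\ref{lemma:spectrahedron_encodes_shapley}, and that I expect to be the real work, is the treatment of the \emph{conflict pairs} $(i,k)$ for which both the Min action $\{i\}\in A^{(k)}$ and the Max action $\{k\}\in B^{(i)}$ are available: a single diagonal entry $Q^{(k)}_{ii}$ can carry only one of these two, so the identification $\max_{b=\{l\}\in B^{(i)}}(r^b_i+x_l)=Q^+_{ii}(x)$ used above needs justification. Here I would verify that the tie-breaking rule of the construction ($Q^{(k)}_{ii}=\tminus(-r^a_k)$ when $-r^a_k>r^b_i$, and $Q^{(k)}_{ii}=r^b_i$ when $r^b_i\ge -r^a_k$) is chosen precisely so that the inequality generated by whichever action is dropped is already implied by the constraint generated by the one that is kept: when $r^b_i\ge -r^a_k$ the entry $r^b_i$ enters $Q^+_{ii}(x)$ and dominates $-r^a_k+x_k$, so the singleton inequality of $\shapley$ at $k$ is subsumed, and the opposite case is symmetric with the roles of Min and Max exchanged. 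If this redundancy should fail in degenerate situations, one first replaces $\Gamma$ by an equivalent game whose associated matrices satisfy Assumption~\ref{assump:well_formed}, via the analogue of Lemma~\ref{lemma:well_formed}. Once this point is settled, the equality $\pertspectra=\{x\in\trop^n:\lambda+x\le\shapley(x)\}$ follows exactly as in the proof of Lemma~\ref{lemma:spectrahedron_encodes_shapley}.
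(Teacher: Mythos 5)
Your proposal follows essentially the same route as the paper's proof in Appendix~\ref{app:equivalence}: rewrite $\lambda + x \le \shapley(x)$ action by action, identify the off-diagonal and positive-diagonal entries with the Min and Max actions, and isolate as the only delicate point the conflict pairs where $\{i\}\in A^{(k)}$ and $\{k\}\in B^{(i)}$ compete for the single entry $Q^{(k)}_{ii}$. The paper resolves this exactly as you propose, via the tie-breaking rule (its equivalences~\eqref{eq:linear_constraints} and~\eqref{eq:equality_under_linear}); the only refinement worth noting is that the case where a Max action is dropped is not handled by pure symmetry but by first establishing the diagonal constraint and then deducing from it the identity $\max_{b\in B^{(i)}}(r^b_i+x_l)=Q^{+}_{ii}(x)$, which is subsequently needed to match the off-diagonal constraints as well.
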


\begin{proof}
Let 
\begin{align*}
(\shapley(x))_k =
\min_{
\substack{a \in A^{(k)}\\
a = \{i, j\}}}
\Bigl(r^a_k + \frac{1}{2}\bigl(\max_{
\substack{
b \in B^{(i)}\\
b = \{l\}}} (r^b_i + x_l) 
+ \max_{
\substack{
b \in B^{(j)}\\
b = \{l\}}} (r^b_j + x_l)\bigr)\Bigr) \,
\end{align*}
denote the Shapley operator of $\Gamma$. As in the proof of Lemma~\ref{lemma:spectrahedron_encodes_shapley}, we have the equivalence
\begin{align*}
\forall k \in [n], \ \lambda + x_{k}  \le (\shapley(x))_k &\iff \\
\forall k \ \forall a \in A^{(k)}, a = \{i,j \}, \ \lambda + x_{k} - r^{a}_{k} \le \frac{1}{2}\bigl(\max_{
\substack{
b \in B^{(i)}\\
b = \{l\}}} (r^b_{i} + x_{l}) 
+ \max_{
\substack{
b \in B^{(j)}\\
b = \{l\}}} (r^b_{j} + x_{l})\bigr) &\iff \\
\forall (i,j), \  \lambda + \max_{\{ a \in \bigcup_{k} A^{(k)} \colon a = \{i,j \} \} }(x_{k} - r^{a}_{k}) \le \frac{1}{2}\bigl(\max_{
\substack{
b \in B^{(i)}\\
b = \{l\}}} (r^b_{i} + x_{l}) 
+ \max_{
\substack{
b \in B^{(j)}\\
b = \{l\}}} (r^b_{j} + x_{l})\bigr)  \, .
\end{align*}
We want to show that the last set of constraints describes $\spectra_{\lambda}$. To do this, recall that an inequality of the form $\max(x,\alpha +y) \ge \max(x', \beta + y)$ is equivalent to $\max(x, \alpha + y) \ge x'$ if $\alpha \ge \beta$, and to $x \ge \max(x', \beta + y)$ if $\beta > \alpha$. Therefore, for every $i \in [m]$ we have the equivalence
\begin{equation}
\begin{aligned}
\lambda + \max_{\{ a \in \bigcup_{k} A^{(k)} \colon a = \{i,i \} \} }(x_{k} - r^{a}_{k}) &\le \max_{
\substack{
b \in B^{(i)}\\
b = \{l\}}} (r^b_{i} + x_{l}) \iff \\
\lambda + \max_{Q^{(k)}_{ii} \in \negtrop}(x_{k} + \abs{Q^{(k)}_{ii}}) &\le \max_{Q^{(l)}_{ii} \in \postrop}(Q^{(l)}_{ii} + x_{l})  \, . \label{eq:linear_constraints}
\end{aligned}
\end{equation}
Moreover, note that if $x \in \trop^{n}$ verifies~(\ref{eq:linear_constraints}), then we have the equality
\begin{equation}
\max_{Q^{(l)}_{ii} \in \postrop}(Q^{(l)}_{ii} + x_{l}) = \max_{
\substack{
b \in B^{(i)}\\
b = \{l\}}} (r^b_{i} + x_{l}) \, . \label{eq:equality_under_linear}
\end{equation}
Indeed, if we have $\max_{Q^{(l)}_{ii} \in \postrop}(Q^{(l)}_{ii} + x_{l}) < r^{\{l^{*}\}}_{i} + x_{l^{*}}$ for some $l^{*} \in [n]$, then by the construction we get $(-r_{l^{*}}^{\{i\}}) > r^{\{l^{*}\}}_{i}$ and $Q^{(l^{*})}_{ii} = \tminus (- r_{l^{*}}^{\{i\}})$. In particular, $\max_{Q^{(k)}_{ii} \in \negtrop}(x_{k} + \abs{Q^{(k)}_{ii}}) > r^{\{l^{*}\}}_{i} + x_{l^{*}}$, what gives a contradiction with~(\ref{eq:linear_constraints}). Furthermore, observe that for any $i \neq j$ we have the equality
\begin{equation}
\lambda + \max_{\{ a \in \bigcup_{k} A^{(k)} \colon a = \{i,j \} \} }(x_{k} - r^{a}_{k}) = \lambda + \max_{Q^{(k)}_{ij} \in \negtrop}(x_{k} + \abs{Q^{(k)}_{ij}}) \, . \label{eq:off_diagonal}
\end{equation}
Suppose that $x \in \spectra_{\lambda}$. Then $x$ verifies~(\ref{eq:linear_constraints}) for all $i \in [m]$. Hence, by~(\ref{eq:equality_under_linear}) and~(\ref{eq:off_diagonal}), for any $i \neq j$ we have
\begin{equation}
\begin{aligned}
\lambda + \max_{\{ a \in \bigcup_{k} A^{(k)} \colon a = \{i,j \} \} }(x_{k} - r^{a}_{k}) &= \lambda + \max_{Q^{(k)}_{ij} \in \negtrop}(x_{k} + \abs{Q^{(k)}_{ij}}) \\
&\le \frac{1}{2}\max_{Q^{(l)}_{ii} \in \postrop}(Q^{(l)}_{ii} + x_{l}) + \frac{1}{2}\max_{Q^{(l)}_{jj} \in \postrop}(Q^{(l)}_{jj} + x_{l}) \\
&= \frac{1}{2}\bigl(\max_{
\substack{
b \in B^{(i)}\\
b = \{l\}}} (r^b_{i} + x_{l}) 
+ \max_{
\substack{
b \in B^{(j)}\\
b = \{l\}}} (r^b_{j} + x_{l})\bigr) \, . \label{eq:equivalence} 
\end{aligned}
\end{equation}
Thus $\lambda + x \le \shapley(x)$. Conversely, if $\lambda + x \le \shapley(x)$, then $x$ also verifies~(\ref{eq:linear_constraints}) for all $i \in [m]$, and the same argument as in~(\ref{eq:equivalence}) shows that $x \in \spectra_{\lambda}$.
\end{proof}

\section{Markov chains}

Let us recall some facts about Markov chains with rewards. We only consider Markov chains on finite spaces. Suppose that we are given a Markov chain $(X_{0}, X_{1}, \dots)$ defined on a finite space $\chainspace$. Recall that such a chain can be described by a \emph{transition matrix} $\trmatrix \in [0,1]^{\card{\chainspace} \times \card{\chainspace}}$, where $\trmatrix_{\state \stateII}$ denotes the probability that chain moves from state $\state$ to state $\stateII$ in one step.

If $\reclass \subset \chainspace$, then we say that $\stdist \in \R_{\ge 0}^{\card{\reclass}}$ is a \emph{stationary distribution} on the set $\reclass$ if $\stdist_{\state} = \sum_{\stateII \in 
\reclass} \stdist_{\stateII}P_{\stateII \state}$ for all $\state \in \reclass$ and $\sum_{\state \in \reclass} \stdist_{\state} = 1$. Furthermore, a set $\reclass \subset \chainspace$ is called a \emph{recurrent class} if it has the following two properties: (a) if $\reclass$ is reached, then the chain will never leave it; (b) if $\reclass$ is reached, then every state in $\reclass$ will be visited infinitely many times (with probability one). Every Markov chain has at least one recurrent class, and every recurrent class has a unique stationary distribution. A state $\state \in \chainspace$ is called \emph{recurrent} if it belongs to a recurrent class. Otherwise, it is called \emph{transient}.

We now introduce Markov chains with payoffs. To this end, with every state $\state \in \chainspace$ we associate a payoff $\payoff_{\state} \in \R$. This quantity is interpreted as follows: there is a controller of the chain, who receives a payoff $\payoff_{\state}$ as soon as the chain leaves the state $\state$. A \emph{(long-term) average payoff} of the controller is defined as
\[
\forall \state \in \chainspace, \, \avpayoff_{\state} = \lim_{N \to \infty} \E \frac{1}{N}\sum_{p = 1}^{N} \payoff_{\state_{p}} \, ,
\]
where the expectation is taken over all trajectories $\state_{1}, \dots, \state_{N}$ starting from $\state_{1} = \state$ in the Markov chain. The next theorem characterizes the average payoff. Before that, let us introduce some additional notation. 

For any state $\state \in \chainspace$, let the random variable $T_{\state} = \inf\{s \ge 1 \colon X_{s} = \state \}$ denote the time of first return to $\state$. By $\theta_{\state}$ we denote the expected time of first return to $\state$, 
\begin{align*}
\theta_{\state} &= \E (T_{\state} | X_{0} = \state) \, .
\end{align*}
Furthermore, let $\xi_{\state}$ be the expected payoff the controlled obtained before returning to $\state$, i.e.,
\[
\xi_{\state} = \E\Bigl(\sum_{s = 0}^{T_\state - 1} \payoff_{X_{s}} \Big | X_{0} = \state \Bigr) \, .
\]

\begin{theorem}\label{theorem:characterization_of_payoff}
If $\state \in \chainspace$ is a fixed initial state, then the average payoff $\avpayoff_{\state}$ is well defined and characterized as follows:
\begin{enumerate}[(i)]
\item\label{point:recurrent_state} Suppose that $\state$ is a recurrent state belonging to the recurrent class $\reclass$. Let $(\stdist_{\stateII})_{\stateII \in \reclass}$ be the stationary distribution on $\reclass$. Then $\stdist_{\state} = 1/\theta_{\state}$. Furthermore, we have
\[
\avpayoff_{\state} = \frac{\xi_{\state}}{\theta_{\state}} = \sum_{\stateII \in \reclass} \payoff_{\stateII}\stdist_{\stateII} \, .
\]
In particular, $\avpayoff_{\state}$ is constant for all states $\state$ belonging to $\reclass$.
\item\label{point:transient_state} If $\state$ is transient and $\reclass_{1}, \dots, \reclass_{p}$ denote all the recurrent classes of the Markov chain, then $\avpayoff_{\state} = \sum_{s = 1}^{p} \avpayoff_{\state_{s}} \psi_{s}$, where, for all $s$, $\psi_{s}$ denotes the probability that the chain starting from $\state$ reaches the recurrent class $\reclass_{s}$, and $\state_{s} \in \reclass_{s}$ is an arbitrary state of $\reclass_{s}$.
\end{enumerate}
\end{theorem}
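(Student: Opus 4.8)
The plan is to handle the recurrent and the transient cases separately, in both cases reducing the statement to the classical renewal-reward theorem for finite Markov chains (see, e.g., \cite{chung_markov_chains,puterman}).

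\emph{Recurrent case.} Fix a recurrent state $\state$ lying in the recurrent class $\reclass$. The restriction of the chain to $\reclass$ is an irreducible Markov chain on a finite set, hence positive recurrent, so $\theta_{\state} < \infty$. By the strong Markov property, a trajectory started at $\state$ splits at its successive return times to $\state$ into i.i.d.\ excursions; the $i$-th excursion has length $T^{(i)}$ with $\E T^{(i)} = \theta_{\state}$ and accumulates a reward $\Xi^{(i)}$ with $\E \Xi^{(i)} = \xi_{\state}$ (finite, since $\abs{\Xi^{(i)}} \le (\max_\stateII \abs{\payoff_\stateII})\, T^{(i)}$). The renewal-reward theorem then gives $\frac{1}{N}\sum_{p=1}^{N} \payoff_{X_p} \to \xi_{\state}/\theta_{\state}$ almost surely; since $\chainspace$ is finite, the payoffs are bounded, so dominated convergence upgrades this to convergence in expectation, proving that $\avpayoff_{\state}$ is well defined and equals $\xi_{\state}/\theta_{\state}$. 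For the remaining equalities I would invoke the standard excursion representation of the stationary distribution: $\stdist_{\stateII} = \E\bigl(\#\{0 \le s < T_{\state} \colon X_s = \stateII\} \mid X_0 = \state\bigr)/\theta_{\state}$ for every $\stateII \in \reclass$; the special case $\stateII = \state$ is Kac's formula $\stdist_{\state} = 1/\theta_{\state}$, and summing $\payoff_{\stateII}$ against these expected visit counts yields $\xi_{\state} = \theta_{\state}\sum_{\stateII \in \reclass}\payoff_{\stateII}\stdist_{\stateII}$, hence $\avpayoff_{\state} = \sum_{\stateII \in \reclass}\payoff_{\stateII}\stdist_{\stateII}$. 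The latter expression does not depend on which state of $\reclass$ was chosen, so $\avpayoff$ is constant on $\reclass$.

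\emph{Transient case.} On a finite state space the chain reaches the union of the recurrent classes in a random time $\tau$ with $\E\tau < \infty$ (geometric tail bound on the probability of avoiding all recurrent classes), whence $\E\bigl(\sum_{p=1}^{\tau}\abs{\payoff_{X_p}}\bigr) < \infty$. Splitting $\frac{1}{N}\sum_{p=1}^{N}\payoff_{X_p} = \frac{1}{N}\sum_{p=1}^{\tau\wedge N}\payoff_{X_p} + \frac{1}{N}\sum_{p=\tau\wedge N+1}^{N}\payoff_{X_p}$, the first term tends to $0$ almost surely and in expectation. Conditioning on the event $A_s$ that the chain is absorbed in $\reclass_s$, which has probability $\psi_s$, the tail sum restricted to $A_s$ is, by the strong Markov property applied at the absorption time and by point~(\ref{point:recurrent_state}), a Cesàro average whose limit equals $\avpayoff_{\state_s}$. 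Taking expectations and using the tower property over the partition $\{A_1, \dots, A_p\}$ gives $\avpayoff_{\state} = \sum_{s=1}^{p}\psi_s\,\avpayoff_{\state_s}$.

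The only delicate points are the two limit interchanges --- the renewal-reward limit and the passage from almost-sure to expected convergence --- both of which are standard for finite chains with bounded rewards and may simply be quoted from \cite{chung_markov_chains} or \cite{puterman}; the excursion identity for $\stdist$ is likewise classical. Everything else is bookkeeping with excursions and hitting probabilities, and I expect no real obstacle there.
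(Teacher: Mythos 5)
Your proposal is correct, but it follows a genuinely different route from the paper. You argue pathwise: the strong Markov property splits the trajectory into i.i.d.\ excursions, the renewal--reward theorem gives almost-sure convergence of the Ces\`aro averages to $\xi_{\state}/\theta_{\state}$, bounded rewards and dominated convergence upgrade this to convergence in expectation, and Kac's formula together with the excursion representation of $\stdist$ converts $\xi_{\state}/\theta_{\state}$ into $\sum_{\stateII}\payoff_{\stateII}\stdist_{\stateII}$; the transient case is handled by conditioning on the absorption events $A_s$. The paper instead works entirely at the level of the transition matrix: it observes that $\E\bigl(\sum_{s=0}^{N} \payoff_{X_s}\mid X_0=\state\bigr)=[(I+\trmatrix+\dots+\trmatrix^{N})\payoff]_{\state}$, so that $\avpayoff=[M\payoff]$ where $M=\lim_N \frac{1}{N}\sum_{s=0}^{N}\trmatrix^{s}$ is the Ces\`aro limit from Theorem~\ref{theorem:ergodic_theorem}; all assertions of the theorem are then read off from the explicit entries of $M$ ($M_{\state\stateII}=\mu_{\state\stateII}/\theta_{\stateII}$, and $M_{\state\stateII}=\zeta_{\state\stateII}/\theta_{\state}$ within a recurrent class, via Corollaries~\ref{corollary:stationary_distribution} and~\ref{corollary:ratio_of_counting_and_time}). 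The two approaches buy different things: the paper's matrix argument makes well-definedness of $\avpayoff$ immediate (the expectation of the partial sum is \emph{exactly} a matrix polynomial, so no interchange of limit and expectation is ever needed) and derives $\sum_{\stateII}\payoff_{\stateII}\stdist_{\stateII}$ first, obtaining $\xi_{\state}/\theta_{\state}$ as a corollary; your argument reaches $\xi_{\state}/\theta_{\state}$ first and is more self-contained probabilistically, at the cost of two limit interchanges (renewal--reward and almost-sure to $L^1$), which you correctly identify as the only delicate points and which are indeed standard for finite chains with bounded rewards. Both proofs are valid; yours is not the one the paper gives, but it is a legitimate alternative resting on the same classical facts from \cite{chung_markov_chains}.
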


\begin{remark}\label{remark:payoff_strongly_polynomial}
We point out that given the transition matrix $\trmatrix$, the average payoff $\avpayoff$ can be computed using the algorithm presented in \cite[Appendix A.3 and A.4]{puterman}. This algorithm can be implemented to run in strongly polynomial complexity using the strongly polynomial version of gaussian elimination (presented, for example, in~\cite[Section 1.4]{lovasz_schrijver_geometric_algorithms}).
\end{remark}

Theorem~\ref{theorem:characterization_of_payoff} is well known, and can be easily derived from the analysis of Markov chains presented in the textbook of Chung~\cite[Part I, \S 6, \S 7, and \S 9]{chung_markov_chains}. We give the details for the sake of completeness. Let $\mu_{uw}$ denote the probability that the Markov chain starting from $u \in E$ will reach $w \in E$ at least once, $\mu_{uw} = \Pbb(\exists s \ge 1, X_{s} = w | X_{0} = u)$. By definition, the state $u$ is recurrent if $\mu_{uu} = 1$, and it is transient otherwise.  By $\zeta_{uw}$ we denote the expected number of visits in $w$ before returning to $u$, i.e., 
\begin{align*}
\zeta_{uw} &= \E\Bigl(\sum_{s = 0}^{T_{u} - 1} \ind_{\{X_{s} = w\} } \Big | X_{0} = u\Bigr) \, .
\end{align*}

The following theorem describes the ergodic behavior of any finite (or countable) Markov chain.
\begin{theorem}\label{theorem:ergodic_theorem}
The Cesaro limit 
\[
\lim_{N \to \infty}\frac{1}{N}\sum_{s = 0}^{N}P^{s}
\]
is well defined (we will denote it by $M$). Moreover, the entries of $M$ are given as follows: if $w$ is a transient state, then $M_{uw} = 0$ for all $u$. If $w$ is recurrent, then $M_{uw} = \frac{\mu_{uw}}{\theta_{w}}$ for all $u$.
\end{theorem}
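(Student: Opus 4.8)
The plan is to prove the entrywise convergence of the Cesàro averages by a renewal argument at the successive visits of the chain to a fixed state. Because $\chainspace$ is finite, convergence of the matrices $\frac1N\sum_{s=0}^N P^s$ is the same as entrywise convergence, so it is enough to fix states $u,w\in\chainspace$ and compute the limit of the $(u,w)$-entry. Writing $N^{(N)}_w\coloneqq\sum_{s=0}^N\ind_{\{X_s=w\}}$ for the number of visits to $w$ up to time $N$, one has
\[
\Bigl(\frac1N\sum_{s=0}^N P^s\Bigr)_{uw}=\frac1N\,\E\bigl(N^{(N)}_w\mid X_0=u\bigr)\,,
\]
so the whole statement reduces to identifying $\lim_{N\to\infty}\frac1N\E(N^{(N)}_w\mid X_0=u)$ in the two cases.

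\textbf{Transient $w$.} Here $\mu_{ww}<1$. Applying the strong Markov property at each successive visit to $w$ shows that, started from $u$, the total number of visits to $w$ equals $0$ with probability $1-\mu_{uw}$ and otherwise is $1$ plus a geometric variable of success parameter $1-\mu_{ww}$; hence $\sum_{s\ge 0}(P^s)_{uw}=\mu_{uw}/(1-\mu_{ww})<\infty$. In particular $\E(N^{(N)}_w\mid X_0=u)$ stays bounded as $N\to\infty$, so $\frac1N\E(N^{(N)}_w\mid X_0=u)\to 0$, which gives $M_{uw}=0$.

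\textbf{Recurrent $w$.} Since $\chainspace$ is finite, $w$ is positive recurrent, i.e.\ $\theta_w<\infty$: the recurrent class $\reclass$ containing $w$ is finite and closed, and being irreducible it admits $n_0$ and $\varepsilon>0$ with $\Pbb(\text{$w$ is hit within $n_0$ steps}\mid X_0=v)\ge\varepsilon$ for every $v\in\reclass$, so $T_w$ is stochastically dominated by $n_0$ times a geometric variable of parameter $\varepsilon$. Let $\tau_0\coloneqq\inf\{s\ge 0\colon X_s=w\}$ and, on $\{\tau_0<\infty\}$, let $\tau_0<\tau_1<\tau_2<\cdots$ be the successive visit times of $w$. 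By the strong Markov property, on $\{\tau_0<\infty\}$ the increments $\tau_{j+1}-\tau_j$ ($j\ge 0$) are i.i.d.\ with the law of $T_w$ under $\Pbb(\cdot\mid X_0=w)$, hence of finite mean $\theta_w\ge 1$. The strong law of large numbers gives $\tau_j/j\to\theta_w$ a.s.\ on $\{\tau_0<\infty\}$, and since $N^{(N)}_w$ is the number of $\tau_j$ not exceeding $N$, the sandwich $\tau_{N^{(N)}_w-1}\le N<\tau_{N^{(N)}_w}$ (valid once $N^{(N)}_w\ge 1$) forces $N^{(N)}_w/N\to 1/\theta_w$ a.s.\ on $\{\tau_0<\infty\}$; on the complement $N^{(N)}_w\equiv 0$. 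Thus, $\Pbb(\cdot\mid X_0=u)$-almost surely, $N^{(N)}_w/N\to\theta_w^{-1}\ind_{\{\tau_0<\infty\}}$, and, using $0\le N^{(N)}_w/N\le 2$ for $N\ge 1$, bounded convergence yields
\[
\frac1N\,\E\bigl(N^{(N)}_w\mid X_0=u\bigr)\ \xrightarrow[N\to\infty]{}\ \frac{1}{\theta_w}\,\Pbb(\tau_0<\infty\mid X_0=u)=\frac{\mu_{uw}}{\theta_w}\,,
\]
where $\Pbb(\tau_0<\infty\mid X_0=u)=\mu_{uw}$ both when $u\neq w$ and when $u=w$ (then $\tau_0=0$ and $\mu_{ww}=1$ by recurrence).

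Combining the two cases, every $(u,w)$-entry of $\frac1N\sum_{s=0}^N P^s$ converges, so $M$ exists and has the asserted entries. The recurrent case is the main obstacle: it rests on the strong Markov property (to exhibit the i.i.d.\ renewal structure at the visits to $w$), on positive recurrence of finite chains (to guarantee $\theta_w<\infty$), and on the strong law of large numbers together with bounded convergence (to turn the almost-sure renewal asymptotics into the Cesàro limit of the expectations); the transient case is immediate from summability of $\sum_s(P^s)_{uw}$.
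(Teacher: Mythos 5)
Your proof is correct. Note that the paper does not actually prove this theorem: its ``proof'' is a citation to Chung's textbook (Part I, \S 6, Theorem 4 and its Corollary), so you have supplied a genuinely self-contained argument where the paper defers to the literature. Your route is the standard renewal-theoretic one: reduce the Cesàro limit of $(P^s)_{uw}$ to $\lim_N \frac1N\E(N^{(N)}_w\mid X_0=u)$, dispose of transient $w$ by summability of the expected number of visits, and for recurrent $w$ use the strong Markov property at successive visits to get i.i.d.\ excursion lengths, positive recurrence of finite chains to get $\theta_w<\infty$, the SLLN plus the sandwich $\tau_{k-1}\le N<\tau_k$ to get $N^{(N)}_w/N\to\theta_w^{-1}\ind_{\{\tau_0<\infty\}}$ a.s., and bounded convergence to pass to expectations. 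All the key steps are present and justified, including the identification $\Pbb(\tau_0<\infty\mid X_0=u)=\mu_{uw}$ in both cases $u\neq w$ and $u=w$. The only blemish is a harmless off-by-one in the transient case: for $u=w$ the total number of visits counted from time $0$ is never $0$ and equals $1$ plus a geometric variable, so $\sum_{s\ge 0}(P^s)_{ww}=1/(1-\mu_{ww})$ rather than $\mu_{ww}/(1-\mu_{ww})$; since only finiteness of this sum is used, the conclusion $M_{uw}=0$ is unaffected. What your approach buys is a proof from first principles requiring only the strong Markov property, the SLLN, and bounded convergence; what the citation buys the authors is brevity and access to Chung's more general (countable-state) formulation, where $\theta_w$ may be infinite and the statement is read with the convention $a/\infty=0$.
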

\begin{proof}
See \cite[Part I, \S 6, Theorem 4 and its Corollary]{chung_markov_chains}.
\end{proof}
\begin{remark}
Note that the theorem above does not state that $\theta_{w} < \infty$ if the state $w$ is recurrent. (We work under the convention that $\frac{a}{+\infty} = 0$ for all finite $a$.) Nevertheless, if the chain is finite, then we have $\theta_{w} < \infty$ for all recurrent states $w$, and this can be deduced as a corollary of the theorem above, as discussed below.
\end{remark}

Observe that $M$ is a stochastic matrix (as a limit of stochastic matrices). Moreover, we have $MP = PM = M$. This leads to the following corollary.
\begin{corollary}\label{corollary:stationary_distribution}
If $C \subset E$ is a recurrent class, then $M_{u w} = \frac{1}{\theta_{w}}$ for all $u, w \in C$. Furthermore, $(\pi_{u})_{u \in C}$ defined as $\pi_{u} = \frac{1}{\theta_{u}}$ is the unique stationary distribution on $C$. In particular, if $u \in E$ is a recurrent state, then $\theta_{u} < \infty$.
\end{corollary}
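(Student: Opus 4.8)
The plan is to deduce everything from Theorem~\ref{theorem:ergodic_theorem} together with the facts, noted just above, that the Cesàro limit $M=\lim_{N\to\infty}\frac1N\sum_{s=0}^{N}P^{s}$ is a stochastic matrix satisfying $MP=PM=M$. First I would pin down the entries of $M$ on $C$. If $u,w\in C$, then since the chain started at $u$ has already reached $C$, property~(b) of a recurrent class gives that $w$ is visited infinitely often, hence at least once, almost surely; thus $\mu_{uw}=1$, and Theorem~\ref{theorem:ergodic_theorem} yields $M_{uw}=\mu_{uw}/\theta_{w}=1/\theta_{w}$. Symmetrically, if $u\in C$ and $w\notin C$, property~(a) says the chain never leaves $C$, so $\mu_{uw}=0$ and $M_{uw}=0$ (and $M_{uw}=0$ for transient $w$ by the theorem in any case). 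This already proves the first assertion.

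Next I would extract the stationary distribution. Fixing $u\in C$ and using that $M$ is stochastic, $1=\sum_{w\in E}M_{uw}=\sum_{w\in C}1/\theta_{w}$, so the vector $\pi$ with $\pi_{w}=1/\theta_{w}$ is a probability vector supported on $C$. To see it is stationary, I would extend $\pi$ by $0$ outside $C$ and apply $MP=M$: for $w\in C$, $\pi_{w}=M_{uw}=(MP)_{uw}=\sum_{v\in E}M_{uv}P_{vw}=\sum_{v\in C}\pi_{v}P_{vw}$ (terms with $v\notin C$ vanish), while for $w\notin C$ one has $\sum_{v\in C}\pi_{v}P_{vw}=0$ because $P_{vw}=0$ when $v\in C$. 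Hence $\pi P=\pi$, i.e.\ $\pi$ is a stationary distribution on $C$.

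It then remains to check positivity (equivalently $\theta_{w}<\infty$ for every $w\in C$) and uniqueness. For positivity, $\sum_{w\in C}\pi_{w}=1$ forces $\pi_{w_{0}}>0$ for some $w_{0}\in C$; since $\mu_{w_{0}w}=1>0$ for every $w\in C$, there is $s\ge1$ with $(P^{s})_{w_{0}w}>0$, and then $\pi_{w}=(\pi P^{s})_{w}\ge\pi_{w_{0}}(P^{s})_{w_{0}w}>0$, so $\theta_{w}<\infty$. For uniqueness, let $\pi'$ be any stationary distribution on $C$, extended by $0$; arguing as in the previous paragraph, $\pi'P=\pi'$ on all of $E$, hence $\pi'P^{s}=\pi'$ for all $s$ and $\pi'M=\lim_{N}\frac1N\sum_{s=0}^{N}\pi'P^{s}=\pi'$. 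But for $w\in C$, $(\pi'M)_{w}=\sum_{u\in C}\pi'_{u}M_{uw}=(1/\theta_{w})\sum_{u\in C}\pi'_{u}=1/\theta_{w}=\pi_{w}$, so $\pi'=\pi$.

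I expect the only genuinely delicate point to be the finiteness $\theta_{w}<\infty$: Theorem~\ref{theorem:ergodic_theorem} is stated with the convention $a/{+\infty}=0$ and so does not rule out $\theta_{w}=+\infty$ directly; one must route through the invariance $MP=M$ to realize $\pi$ as an honest stationary distribution and then use that all states of $C$ communicate to propagate positivity from a single state. The remainder is routine bookkeeping with stochastic matrices.
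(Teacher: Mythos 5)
Your proof is correct, and the backbone is the same as the paper's: identify $M_{uw}=1/\theta_w$ on the recurrent class via Theorem~\ref{theorem:ergodic_theorem}, then exploit $MP=M$ and stochasticity of $M$ to realize $(1/\theta_w)_{w\in C}$ as a stationary distribution. The differences are organizational and in what you choose to prove versus cite. The paper passes to the submatrices $P_C$, $M_C$, notes $[P^s]_C=[P_C]^s$ and that $M_C$ is stochastic with identical rows; you instead keep the full matrices and extend $\pi$ by zero outside $C$ --- these are equivalent. More substantively, the paper delegates the positivity of a stationary distribution on a recurrent class and the uniqueness statement to Chung's book, whereas you prove both from scratch: positivity by propagating $\pi_{w_0}>0$ through the relation $\pi=\pi P^s$ using that states of $C$ communicate (this is the right way to get $\theta_w<\infty$, which the ergodic theorem alone does not give), and uniqueness by the standard computation $\pi'=\pi'M$ combined with the fact that $M_{uw}=1/\theta_w$ is independent of $u\in C$. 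Your version is therefore more self-contained; the paper's is shorter at the cost of two external citations. One tiny point worth making explicit in your uniqueness step: $\frac1N\sum_{s=0}^{N}\pi'P^s=\frac{N+1}{N}\pi'\to\pi'$, and the exchange of limit and finite sum giving $\pi'M=\pi'$ is licensed by the finiteness of the state space.
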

\begin{proof}
Let first claim follows immediately from~Theorem~\ref{theorem:ergodic_theorem}. We will prove that $(\pi_{u})_{u \in C}$ is a stationary distribution on $C$. Let $P_{C}$ denote the square submatrix of $P$ formed by the rows and columns of $P$ with indices in $C$. We define $M_{C}$ analogously. The first claim implies that $M_{C}$ has identical rows. Since $C$ is a recurrent class, we have $P_{uw'} = 0$ for all $u \in C, w' \notin C$. Hence, for all $s$ we have $[P^{s}]_{C} = [P_{C}]^{s}$. Therefore $M_{C} = \lim_{N \to \infty} \frac{1}{N}\sum_{s = 0}^{N}P_{C}^{s}$. Hence, $M_{C}$ is stochastic. In other words, every row of $M_{C}$ is a probability distribution on $C$ and, since $M_{C}P_{C} = M_{C}$, this distribution is a stationary distribution on $C$. Since $C$ is a recurrent class, stationary distribution on $C$ has only strictly positive values. Hence we have $\theta_{u} < \infty$. The fact that the stationary distribution is unique follows from \cite[Part I, \S 7, Theorem 1]{chung_markov_chains}.
\end{proof}

The next theorem characterizes the relationship between entries of $M$ and the values $\zeta_{uw}$.
\begin{theorem}\label{theorem:ratio_of_counting_numbers}
If $(u,w)$ belong to the same recurrent class, then $0 <\zeta_{uw} < \infty$. Moreover, if $(f,g,h,u,w)$ are (not necessarily distinct) states belonging to the same recurrent class, then $M_{fg}/M_{hu} = \zeta_{w g}/\zeta_{w u}$.
\end{theorem}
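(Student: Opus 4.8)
The plan is to show that the excursion-occupation vector $\zeta_{w\,\cdot}$ is, up to a scalar depending only on $w$, the stationary distribution on the recurrent class containing $w$; the two assertions of the theorem then drop out by combining this with Corollary~\ref{corollary:stationary_distribution} and Theorem~\ref{theorem:ergodic_theorem}.

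First I would fix the recurrent class $\reclass$ containing the five states $f,g,h,u,w$ (such a class exists by hypothesis) and record what is already available. By Corollary~\ref{corollary:stationary_distribution}, $\theta_{v}<\infty$ for every $v\in\reclass$, the unique stationary distribution $\stdist$ on $\reclass$ satisfies $\stdist_{v}=1/\theta_{v}$, and since $\mu_{vv'}=1$ whenever $v,v'$ lie in a common recurrent class (property (b) of a recurrent class), Theorem~\ref{theorem:ergodic_theorem} gives $M_{vv'}=1/\theta_{v'}$ for all $v,v'\in\reclass$. I would also note that once the chain reaches $\reclass$ it never leaves it, so the trajectory started from any $w\in\reclass$ stays in $\reclass$ forever.

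The heart of the argument is the claim that $\zeta_{wg}=\theta_{w}/\theta_{g}$ for all $w,g\in\reclass$. To prove it, fix $w\in\reclass$ and set $\nu_{g}\coloneqq\zeta_{wg}$. Since the trajectory from $w$ stays in $\reclass$, we have $\nu_{g}=0$ for $g\notin\reclass$, and by Tonelli $\sum_{g\in\reclass}\nu_{g}=\E\bigl(T_{w}\mid X_{0}=w\bigr)=\theta_{w}<\infty$; in particular each $\nu_{g}$ is finite, and $\nu_{w}=\zeta_{ww}=1$ because the only visit to $w$ strictly before $T_{w}$ is the one at time $0$. Next I would prove that $\nu$, viewed as a row vector indexed by $\reclass$, satisfies $\nu\trmatrix_{\reclass}=\nu$, where $\trmatrix_{\reclass}$ is the submatrix of $\trmatrix$ on $\reclass$. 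Writing $\nu_{g}=\sum_{s\ge0}\Pbb(X_{s}=g,\,T_{w}>s\mid X_{0}=w)$ and using the Markov property (the event $\{T_{w}>s\}$ depends only on $X_{0},\dots,X_{s}$), one gets $(\nu\trmatrix_{\reclass})_{g}=\sum_{s\ge0}\Pbb(X_{s+1}=g,\,T_{w}>s\mid X_{0}=w)$. Decomposing $\{T_{w}>s\}$ into $\{T_{w}=s+1\}$ and $\{T_{w}>s+1\}$: on the first event $X_{s+1}=w$, so its total contribution is $\ind_{g=w}\,\Pbb(T_{w}<\infty\mid X_{0}=w)=\ind_{g=w}$ by recurrence; the second event, reindexed by $s'=s+1\ge1$, contributes $\nu_{g}-\ind_{g=w}$. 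Hence $(\nu\trmatrix_{\reclass})_{g}=\nu_{g}$. Since $\nu$ is a nonzero, finite, nonnegative left-invariant measure on $\reclass$, the vector $\nu/\theta_{w}$ is a stationary distribution on $\reclass$, so by the uniqueness in Corollary~\ref{corollary:stationary_distribution} it equals $\stdist$, giving $\zeta_{wg}=\nu_{g}=\theta_{w}\stdist_{g}=\theta_{w}/\theta_{g}$.

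It then remains to assemble the statement. Taking $g\coloneqq w$ and $w\coloneqq u$ in the core claim yields $\zeta_{uw}=\theta_{u}/\theta_{w}$, which lies in $(0,\infty)$ since $\theta_{u},\theta_{w}\in(0,\infty)$, establishing the first assertion for any $u,w$ in a common recurrent class. For the second, using $M_{fg}=1/\theta_{g}$, $M_{hu}=1/\theta_{u}$ and the core claim,
\[
\frac{M_{fg}}{M_{hu}}=\frac{\theta_{u}}{\theta_{g}}=\frac{\theta_{w}/\theta_{g}}{\theta_{w}/\theta_{u}}=\frac{\zeta_{wg}}{\zeta_{wu}}\,,
\]
as required. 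The single genuinely non-formal ingredient is the invariance $\nu\trmatrix_{\reclass}=\nu$ — the classical construction of an invariant measure from the excursions of the chain away from $w$, together with the boundary bookkeeping in the reindexing — and that is the step I expect to need the most care; everything else is either already proved in the excerpt or routine. If desired, that step can instead be quoted from Chung~\cite[Part~I]{chung_markov_chains}.
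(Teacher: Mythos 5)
Your proof is correct, but it takes a different route from the paper: the paper's proof of this theorem is essentially a pointer to Chung (it quotes Part~I, \S 9, Theorems~2 and~3 of \cite{chung_markov_chains} for $0<\zeta_{uw}<\infty$, observes $M_{uw}=1/\theta_w>0$ from Corollary~\ref{corollary:stationary_distribution}, and quotes Theorem~5 of the same section for the ratio identity), whereas you reconstruct the underlying argument from scratch. Your key step --- showing that the excursion occupation vector $\nu_g=\zeta_{wg}$ satisfies $\nu\trmatrix_{\reclass}=\nu$, has total mass $\theta_w$, and hence by the uniqueness in Corollary~\ref{corollary:stationary_distribution} equals $\theta_w\stdist$ --- is carried out correctly: the Markov-property computation, the decomposition of $\{T_w>s\}$ into $\{T_w=s+1\}$ and $\{T_w>s+1\}$, and the boundary terms $\ind_{g=w}$ all check out, and the finiteness $\sum_g\nu_g=\theta_w<\infty$ is legitimately available from Corollary~\ref{corollary:stationary_distribution}. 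What your version buys is self-containedness and in fact a stronger, explicit conclusion $\zeta_{wg}=\theta_w/\theta_g$ (equivalently Corollary~\ref{corollary:ratio_of_counting_and_time}, which the paper derives only afterwards from this theorem), from which both assertions are immediate; what the paper's version buys is brevity, at the cost of leaving the excursion-measure argument entirely to the reference.
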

\begin{proof}
The fact that $0 <\zeta_{uw} < \infty$ follows from \cite[Part I, \S 9, Theorem 2 and 3]{chung_markov_chains}. Moreover, by Corollary~\ref{corollary:stationary_distribution} we have $M_{uw} = \frac{1}{\theta_{w}} > 0$. Hence the claim follows from \cite[Part I, \S 9, Theorem 5 and remarks that precede it]{chung_markov_chains}.
\end{proof}
\begin{corollary}\label{corollary:ratio_of_counting_and_time}
If $(u, w)$ belong to the same recurrent class, then 
\[
M_{uw} = \frac{1}{\theta_{w}} = \frac{\zeta_{uw}}{\theta_{u}} \, .
\]
\end{corollary}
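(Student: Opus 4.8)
The plan is to deduce both equalities directly from the two results just established, namely Corollary~\ref{corollary:stationary_distribution} and Theorem~\ref{theorem:ratio_of_counting_numbers}, with essentially no additional work. The first equality $M_{uw} = 1/\theta_w$ is precisely the statement of Corollary~\ref{corollary:stationary_distribution} (which also ensures $\theta_w < \infty$ since $u,w$ lie in a common recurrent class), so all that remains is the second equality, which I will rewrite as $M_{uw} = \zeta_{uw}/\theta_u$.

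To prove this, I would invoke Theorem~\ref{theorem:ratio_of_counting_numbers}, specializing its five states so that four of them equal $u$ and one equals $w$; concretely, in the theorem's own notation $M_{fg}/M_{hu} = \zeta_{wg}/\zeta_{wu}$, I take $f = h = u$, $g = w$, and the two remaining indices (the theorem's $u$ and $w$) also equal to $u$. All these states lie in the common recurrent class of $u$ and $w$, so the hypotheses are met, and the theorem yields
\[
\frac{M_{uw}}{M_{uu}} = \frac{\zeta_{uw}}{\zeta_{uu}} \, .
\]
I then evaluate the two quantities attached to the single state $u$. On the one hand $M_{uu} = 1/\theta_u$, again by Corollary~\ref{corollary:stationary_distribution}. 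On the other hand $\zeta_{uu} = 1$: by definition $\zeta_{uu} = \E\bigl(\sum_{s=0}^{T_u-1}\ind_{\{X_s = u\}}\mid X_0 = u\bigr)$, where the term $s = 0$ contributes $1$ since $X_0 = u$, while each term with $1 \le s \le T_u - 1$ contributes $0$ because $T_u = \inf\{s \ge 1 : X_s = u\}$ is by definition the first instant after time $0$ at which the chain revisits $u$. Substituting these two values into the displayed identity gives $M_{uw} = \zeta_{uw}/\theta_u$, and together with $M_{uw} = 1/\theta_w$ this is exactly the claimed chain of equalities.

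I do not expect any real obstacle: the corollary is an immediate consequence of the two results it follows from, and the only line of genuine argument is the identity $\zeta_{uu} = 1$. The single point requiring care is checking that the specialization of states fed into Theorem~\ref{theorem:ratio_of_counting_numbers} is legitimate, i.e.\ that all chosen states belong to a single recurrent class --- which holds by the standing hypothesis that $u$ and $w$ do --- and that the denominator $M_{uu}$ in the ratio is nonzero, which is guaranteed since $M_{uu} = 1/\theta_u$ with $\theta_u < \infty$ by Corollary~\ref{corollary:stationary_distribution}.
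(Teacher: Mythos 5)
Your proof is correct and follows essentially the same route as the paper: both arguments combine Corollary~\ref{corollary:stationary_distribution} with a specialization of the five indices in Theorem~\ref{theorem:ratio_of_counting_numbers} and the observation that $\zeta_{uu}=1$ (the paper specializes to $M_{ww}/M_{uu}=\zeta_{uw}/\zeta_{uu}$, you to $M_{uw}/M_{uu}=\zeta_{uw}/\zeta_{uu}$, which are equivalent since $M_{uw}=M_{ww}=1/\theta_w$). Your explicit justification of $\zeta_{uu}=1$ is a welcome bit of extra care that the paper leaves to the reader.
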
 
\begin{proof}
By Theorem~\ref{theorem:ergodic_theorem} and Theorem~\ref{theorem:ratio_of_counting_numbers} we have $\theta_{u}/\theta_{w} = M_{ww}/M_{uu} = \zeta_{u w}/\zeta_{u u}$. By definition $\zeta_{uu} =  1$ and hence $\theta_{u}/\theta_{w} = \zeta_{u w}$.
\end{proof}

\begin{proof}[Proof of Theorem~\ref{theorem:characterization_of_payoff}]
Fix $u \in E$. Observe that for all $N \ge 1$ we have
\[
\E\Bigl(\sum_{s = 0}^{N} r_{X_{s}} \Big | X_{0} = u \Bigr) = [r + Pr + \dots + P^{N}r]_{u} \, .
\]
Therefore $g_{u} = [Mr]_{u}$. In particular, $g_{u}$ is well defined.

Let us suppose that the initial state $u$ is recurrent and denote its recurrent class by $C$. In this case, Corollary~\ref{corollary:stationary_distribution} and Theorem~\ref{theorem:ergodic_theorem} imply that $g_{u} = \sum_{w \in C} r_{w}\pi_{w}$, where $(\pi_{w})_{w \in C}$ is the stationary distribution on $C$. Moreover, Corollary~\ref{corollary:ratio_of_counting_and_time} gives the identity
\begin{align*}
g_{u} &= \frac{1}{\theta_{u}}\sum_{w \in C}\zeta_{uw}r_{w} \\ &= \frac{1}{\theta_{u}}\E\Bigl(\sum_{w \in C}\sum_{s = 0}^{T_{u} - 1} r_{w}\ind_{\{X_{s} = w\} } \Big | X_{0} = u\Bigr) \\
&= \frac{1}{\theta_{u}}\E\Bigl(\sum_{s = 0}^{T_{u} - 1} \sum_{w \in E}r_{w}\ind_{\{X_{s} = w\} } \Big | X_{0} = u\Bigr) \\
&= \frac{1}{\theta_{u}}\E\Bigl(\sum_{s = 0}^{T_{u} - 1} r_{X_{s}} \Big | X_{0} = u\Bigr) = \frac{\xi_{u}}{\theta_{u}} \, .
\end{align*}

Now, suppose that the initial state $u$ is transient. Let $C_{1}, \dots, C_{p}$ denote the recurrent classes in our Markov chain. In this case, Theorem~\ref{theorem:ergodic_theorem} gives the identity
\[
g_{u} = \sum_{s = 1}^{p} \sum_{w \in C_{s}} \frac{\psi_{s}}{\theta_{w}}r_{w} = \sum_{s = 1}^{p} \psi_{s}g_{u_{s}}. \qedhere
\]
\end{proof}

\end{document}